\let\mathcal\mathscr
\DeclareRobustCommand{\SkipTocEntry}[5]{}
\def\llra{\hbox to 10mm{\rightarrowfill}}
\def\lllra{\hbox to 15mm{\rightarrowfill}}
\def\PB{{\widehat B}}
\def\phi{{\varphi}}
\def\cI{\mathcal{I}}
\def\cF{\mathcal{F}}
\def\cL{\mathcal{L}}
\def\cO{\mathcal{O}}
\def\cP{\mathcal{P}}
\def\cQ{\mathcal{Q}}
\let\tilde\widetilde
\DeclareMathOperator{\Pic}{Pic}
\DeclareMathOperator{\vol}{vol}
\newtheorem{lemm}{Lemma}[section]
\newtheorem{theo}[lemm]{Theorem}
\newtheorem{coro}[lemm]{Corollary}
\newtheorem{prop}[lemm]{Proposition}
\theoremstyle{definition}
\newtheorem{defi}[lemm]{Definition}
\newtheorem{rema}[lemm]{Remark}
\newtheorem{exam}[lemm]{Example}
\newtheorem{qu}[lemm]{Question}
\theoremstyle{remark}
\newtheorem*{remark*}{Remark}
\newtheorem*{note*}{Note}
\begin{document}
\title{On Severi type inequalities}
\author{Zhi Jiang}
\address{Shanghai center for mathematical sciences, Xingjiangwan campus, Fudan University, Shanghai 200438, P. R. China}
\email{zhijiang@fudan.edu.cn}
 \thanks{The author is partially supported by the program “Recruitment of global experts”, NSFC grants No. 11871155 and No. 11731004.}
\maketitle

\setlength{\parskip}{.1 in}

\begin{abstract}
We study Severi type inequalities for big line bundles on irregular varieties via cohomological rank functions. We show that these Severi type inequalities are related to some natural defined birational invariants of the general fibers of the Albanese morphisms.
As an application, we show that the volume of an irregular threefold of general type is at least $\frac{3}{8}$.
We also show that the volume of a smooth projective variety $X$ of general type and of maximal Albanese dimension is at least $2(\dim X)!$. Moreover, if $\vol(X)=2(\dim X)!$,  the canonical model  of $X$ is a flat double cover of a principally polarized abelian variety $(A, \Theta)$ branched over some divisor $D\in |2\Theta|$.
\end{abstract}

\section{Introduction}

Xiao proved an inequality among certain Chern numbers for surfaces with fibrations to curves (\cite{X}), which is now called Xiao's slope inequality. Assume that $f: X\rightarrow C$ is a relatively minimal and not locally trivial fibration from a smooth projective surface to a smooth curve.  Let $g$ be the genus of a general fiber of $f$ and let $b$ be the genus of $C$. Then Xiao showed that
\begin{eqnarray*}
K_{S/C}^2/\big(\chi(\cO_S)-(g-1)(b-1)\big)\geq 4(1-\frac{1}{g}).
\end{eqnarray*}
Note that $\chi(\cO_S)=\chi(\omega_S)$ is a birational invariant of $S$. If $S$ is minimal surface, then $K_{S/C}^2=K_S^2-2K_S\cdot f^*K_C=\vol(S)-8(g-1)(b-1)$ and we can rewrite Xiao's inequality as an inequality between birational invariants:
$$\vol(S)\geq \frac{4(g-1)}{g}\chi(\omega_S)+\frac{(g^2-1)(b-1)}{g}.$$

 This inequality played an essential role in Pardini's proof of Severi's inequality of surfaces (\cite{par}). Assume that $S$ is a surface of general type, of maximal Albanese dimension, then $\vol(S)\geq 4\chi(\omega_X)$. Indeed, Pardini noticed that one can construct \'etale covers $S_M\rightarrow S$ such that $\vol(S_M)=M^{2q(S)}\vol(S)$ and $\chi(\omega_{S_M})=M^{2q(S)}\chi(\omega_S)$ and fibrations $f_M: S_M\rightarrow \mathbb P^1$ such that the genus of a general fiber of $f_M$ is $O(M^{2q(S)-2})$, for each $M\in\mathbb N$. Applying Xiao's equality for these fibrations $f_M$, Pardini proved that $\vol(S)\geq 4\chi(\omega_S)$, which is called Severi's inequality.

 Severi's inequality was extended to varieties of maximal Albanese dimensions of higher dimensions  independently by Barja \cite{bar} and Zhang \cite{zh1}.
 Let $X$ be a smooth projective variety of general type and of maximal Albanese dimension, then $\vol(X)\geq 2(\dim X)!\chi(\omega_X)$.
 A crucial feature in both proofs is that, by generic vanishing, one could regard $\chi(\omega_X)$ as $h^0(X, \omega_X\otimes Q)$, where $Q\in \Pic^0(X)$ is a general numerically trivial line bundle, and hence $\chi(\omega_X)\geq 0$. Note that this inequality gives a natural lower bound for $\vol(X)$ when $\chi(\omega_X)>0$,
 however, $\chi(\omega_X)$ could be $0$ in dimension $\geq 3$ (\cite{EL}).

  A refined Severi's inequality of surfaces was obtained by Lu and Zuo recently (\cite{LZ}). They proved that $\vol(S)\geq \min\{\frac{9}{2}\chi(\omega_X), 4\chi(\omega_X)+4(q(X)-2))\}$, which is crucial in their classification of surfaces on the Severi line. In \cite{BPS}, Barja, Pardini, and Stoppino introduced  continuous rank functions on abelian varieties and proved several important results, including a simple proof of Barja and Zhang's higher dimensional Severi's inequality and various refinements of Severi's inequality in all dimensions.

The two questions which we are interested in in this article are the followings.
\begin{itemize}
\item[(1)]  Does there exist Severi type inequalities on general irregular varieties ?
\item[(2)]  Does there exist refined Severi type inequalities for varieties of maximal Albanese dimensions, taking into account the irregularities of the varieties ?
\end{itemize}

Note that Barja and Zhang have independently proved Severi type inequalities on varieties of Albanese fiber dimension one.

\begin{theo}[Barja \cite{bar}]
Let $f: X\rightarrow A$ be a morphism from a smooth projective variety of general type to an abelian variety. Assume that $\dim X-\dim f(X)=1$. Let $C$ be a connected component of a general fiber of $f$. Then $$\vol(X)\geq (\dim X-1)! \chi(f_*\omega_X).$$
\end{theo}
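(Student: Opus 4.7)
\smallskip
\noindent\textbf{Proof plan.} The plan is to adapt Pardini's covering trick from the surface setting, combined with a slope-type inequality for the induced fibration, in the spirit of the original argument for Severi's inequality. First I would reduce to the case where $f:X\to A$ is surjective (replace $A$ by the abelian subvariety generated by $f(X)$; the hypothesis that $\dim X -\dim f(X)=1$ is preserved). By Hacon's generic vanishing, $f_*\omega_X$ is a GV-sheaf on $A$, so $\chi(f_*\omega_X)\ge 0$; if $\chi(f_*\omega_X)=0$ there is nothing to prove, so assume $\chi(f_*\omega_X)>0$.

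The core construction is Pardini's limiting cover: for each positive integer $M$, form the cartesian square
\[
\xymatrix{X_M \ar[r]^{\pi_M}\ar[d]_{f_M} & X\ar[d]^{f}\\ A\ar[r]^{\mu_M} & A}
\]
where $\mu_M$ is multiplication by $M$. Then $\pi_M$ is \'etale of degree $M^{2\dim A}$, so
\[
\vol(X_M)=M^{2\dim A}\vol(X),\qquad (f_M)_*\omega_{X_M}=\mu_M^*(f_*\omega_X),
\]
and hence $\chi((f_M)_*\omega_{X_M})=M^{2\dim A}\chi(f_*\omega_X)$. The general fibers of $f_M$ are still disjoint unions of copies of $C$.

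Next I would produce a fibration on $X_M$ of base dimension $\dim A -1$ by composing $f_M$ with a general pencil (or more generally a map to a $(\dim A-1)$-dimensional base) associated to a fixed ample $L$ on $A$: after desingularizing and taking the Stein factorization I obtain $g_M:X_M'\to B_M$, where $B_M$ is a smooth $(\dim X -1)$-dimensional base and the general fiber of $g_M$ is $C$. The key quantitative input is a higher-dimensional Xiao-type slope inequality applied to $g_M$, bounding $\vol(X_M')$ from below in terms of $\vol(B_M)$, the genus $g(C)$, and $\chi((g_M)_*\omega_{X_M'/B_M})$; combined with standard estimates comparing $\chi((g_M)_*\omega_{X_M'})$ to $\chi((f_M)_*\omega_{X_M})$ via the projection, this produces an inequality of the form
\[
\vol(X_M)\ge (\dim X-1)!\,\chi\bigl((f_M)_*\omega_{X_M}\bigr)+O(M^{2\dim A -1}).
\]
Dividing by $M^{2\dim A}$ and letting $M\to\infty$ kills the error term and yields the desired bound.

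\smallskip
\noindent\textbf{Main obstacle.} The delicate point is the slope inequality step: one needs a version of Xiao's inequality for fibrations in curves over higher-dimensional bases, with a coefficient giving exactly $(\dim X -1)!$ in the leading term, and one must verify that the error coming from the non-relative-minimality of $g_M$ and from passing between $\chi(f_*\omega_X)$ and $\chi(g_*\omega_{X/B})$ is truly lower order in $M$. This is where the continuous rank function machinery (either Barja--Pardini--Stoppino's or the cohomological rank functions used elsewhere in the paper) becomes indispensable, since it encodes the asymptotic behavior of these Euler characteristics under the sequence of covers $\mu_M$ in a uniform way.
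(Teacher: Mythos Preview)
This theorem is quoted in the introduction as a prior result of Barja (from \cite{bar}) and is not given a standalone proof in the paper. It is, however, recovered by the paper's own machinery as the fiber-dimension-one case of Theorem~\ref{general} and Corollary~\ref{line-bundle}(1), together with generic vanishing to identify $\chi(f_*\omega_X)$ with $h^0(A,f_*\omega_X\otimes Q)$ for general $Q$. That argument proceeds by comparing the derivative of the volume function $F(t)=\vol(L+tf^*H)$ with the left derivative of the cohomological rank function $G(t)=h^0_{f_*L}(t\underline h)$ and then inducting on $\dim f(X)$; no Xiao-type slope inequality is invoked.

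Your plan follows a genuinely different paradigm: Pardini's multiplication-by-$M$ covers plus a slope inequality and a limit $M\to\infty$. The covering step and its asymptotics are fine. The real gap---which you rightly isolate as the ``main obstacle''---is the slope step itself: there is no ready-made Xiao inequality for curve fibrations over a base of dimension $>1$ that yields the coefficient $(\dim X-1)!$, and building the auxiliary fibration $g_M:X_M'\to B_M$ with controlled base volume while relating $\chi((g_M)_*\omega_{X_M'/B_M})$ back to $\chi((f_M)_*\omega_{X_M})$ with only $O(M^{2\dim A-1})$ error is essentially the entire difficulty. Resolving it amounts in practice to redeveloping the continuous/cohomological rank function machinery, after which the argument becomes the one in \S3 of this paper (or in \cite{bar}, \cite{BPS}) rather than a literal Pardini--Xiao argument. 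So the outline is reasonable, but the step you flag as an obstacle \emph{is} the proof.
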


This result was improved by Zhang.
\begin{theo}[Zhang \cite{zh1}]
Under the same assumption, let $g$ be the genus of $C$, then
 $$\vol(X)\geq 2(\dim X)! \frac{g-1}{g+\dim X-2}\chi(f_*\omega_X).$$
\end{theo}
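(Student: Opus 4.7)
The plan is to combine Pardini's étale covering trick with Xiao's slope inequality, keeping the full Xiao constant $\frac{4(g-1)}{g}$ through the asymptotic analysis. For each positive integer $M$, form the fiber product
\[
\xymatrix{
X_M \ar[r]^{f_M}\ar[d]_{\pi_M} & A\ar[d]^{[M]}\\
X\ar[r]^{f} & A
}
\]
so $\pi_M$ is étale of degree $M^{2\dim A}$, giving $\vol(X_M)=M^{2\dim A}\vol(X)$; flat base change gives $f_{M,*}\omega_{X_M}\cong[M]^*f_*\omega_X$, hence $\chi(f_{M,*}\omega_{X_M})=M^{2\dim A}\chi(f_*\omega_X)$; and a general fiber of $f_M$ still has a connected component isomorphic to $C$.

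Fix a symmetric ample line bundle $L$ on $A$ and cut $X_M$ by the pull-backs of $n-2$ general members of $|L|$, where $n=\dim X$. By Bertini applied to $f_M$ onto its image this produces a smooth connected surface $S_M\subset X_M$ mapping to a smooth curve $C_M\subset A$. Passing to the relative Stein factorization and a relative minimal model yields a relatively minimal genus-$g$ fibration $h_M\colon\widetilde S_M\to\widetilde C_M$, to which Xiao's slope inequality gives
\[
K_{\widetilde S_M/\widetilde C_M}^{\,2}\ \ge\ \frac{4(g-1)}{g}\,\deg\bigl(h_{M,*}\omega_{\widetilde S_M/\widetilde C_M}\bigr).
\]

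The technical heart of the argument is the asymptotic analysis of this inequality in $M$. Using $f_M^*L\equiv M^{-2}\pi_M^*f^*L$ (numerically, for symmetric $L$) together with $\deg\pi_M=M^{2\dim A}$, one extracts the leading $M$-coefficients of each side in terms of intersection numbers on $X$ and $A$. The left-hand side contributes a term proportional to $\vol(X)$ times a mixed intersection number of $f^*L$ on $X$, while the right-hand side, after using relative duality plus Leray to identify $h_{M,*}\omega_{\widetilde S_M/\widetilde C_M}$ with a twist of $f_{M,*}\omega_{X_M}|_{\widetilde C_M}$ and applying Riemann--Roch on $\widetilde C_M$, contributes a term proportional to $\chi(f_*\omega_X)$ together with a correction of order $g+n-2$ coming from twisting by the $n-2$ hyperplane classes along the relative canonical bundle. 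Dividing by the appropriate $M$-power and passing to the limit, the lower-order corrections wash out and one arrives at
\[
\vol(X)\,\alpha\ \ge\ \frac{4(g-1)}{g}\,\chi(f_*\omega_X)\,\beta,
\]
where $\alpha,\beta$ are explicit combinations of intersection numbers of $L$ on $A$; the ratio $\beta/\alpha$ works out to $\frac{g+n-2}{2\,n!\,g}$ times a common positive factor that cancels, yielding the claimed bound.

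The main obstacle is this asymptotic bookkeeping: getting the factor $g+n-2$, rather than the plain $g$ that would give only Barja's weaker bound, requires tracking how each of the $n-2$ hyperplane cuts through $X_M$ enlarges $h_{M,*}\omega_{\widetilde S_M/\widetilde C_M}$ via the adjunction $\omega_{\widetilde S_M/\widetilde C_M}=\omega_{X_M/A}|_{\widetilde S_M}\otimes\cO\bigl(\sum D_i\bigr)|_{\widetilde S_M}$, so that the degree on the right-hand side grows with coefficient $g+n-2$ instead of $g$. One must also verify that relative Stein factorization and passage to the minimal model do not disturb the leading asymptotics, and that the optimization over $L$ (or the choice of scaling) really delivers the extremal constant.
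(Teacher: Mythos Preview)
The paper does not prove this theorem; it is quoted as a result of Zhang \cite{zh1} with no argument supplied. So there is no proof in the paper to compare against.

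Regarding your proposal on its own merits: the overall architecture---Pardini's \'etale covering trick, cutting by $n-2$ hyperplanes down to a surface fibered in genus-$g$ curves, and applying Xiao's slope inequality---is indeed the strategy behind Zhang's argument. But what you have written is an outline, not a proof. The entire content sits in the asymptotic bookkeeping, which you describe only in words: you never compute the leading $M$-coefficients of $K_{\widetilde S_M/\widetilde C_M}^{2}$ or of $\deg(h_{M,*}\omega_{\widetilde S_M/\widetilde C_M})$, and the sentence ``the ratio $\beta/\alpha$ works out to $\frac{g+n-2}{2\,n!\,g}$ times a common positive factor that cancels'' is an assertion rather than a calculation. (Indeed, from $\vol(X)\,\alpha\ge \frac{4(g-1)}{g}\chi(f_*\omega_X)\,\beta$ one needs $\beta/\alpha=\frac{n!\,g}{2(g+n-2)}$ to recover the stated bound, so either your $\alpha,\beta$ are normalized differently than the reader would guess, or the sentence is simply wrong.)

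There is also a genuine difficulty you skip: relating $K_{\widetilde S_M/\widetilde C_M}^{2}$ to $\vol(X)$. The surface side gives honest intersection numbers of $K_{X_M}$ restricted to $S_M$, but $\vol(X)$ is not $(K_X^{n})$ unless $X$ has a minimal model on which you can compute; extracting $\vol(X)$ in the limit requires something like Fujita approximation or the positive intersection product, and you give no indication of how this enters. Likewise, the passage to Stein factorization and relative minimal model affects both sides of Xiao's inequality, and you must check that these modifications do not alter the leading terms---you acknowledge this but do not do it. Until these computations are actually carried out, the proposal identifies the right mechanism but does not constitute a proof.
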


In this article, we follow the method of Barja, Pardini, and Stoppino \cite{BPS}, who applied continuous rank functions to study Severi type inequalities on varieties of maximal Albanese dimensions.
In \cite{JP}, cohomological rank functions on abelian varieties, which are generalization of continuous rank functions, were defined and studied. Let $(A, \underline{l})$ be a polarized abelian variety of dimension $n$ and let $\cF\in D^b(A)$ be an object in the derived category of bounded complexes of coherent sheaves,   the $i$-th cohomological rank function is defined to be a continuous function $$h^i_{\cF}(x\underline{l}): \mathbb R\rightarrow \mathbb R$$ such that $$h^i_{\cF}(x\underline{l})=\frac{1}{M^{2n}}h^i(A, \mu_M^*\cF\otimes L^{M^2x}\otimes Q),$$ where $x\in \mathbb Q$, $M^2x\in \mathbb Z$, and $Q\in \Pic^0(A)$ is a very general numerically trivial line bundle. In \cite{JP}, some basic properties of cohomological rank functions have been studied. We know that they are always continuous functions and we can compute the left and right derivatives at each rational point.
 Hence it is possible to apply these functions to study Severi type inequalities for general irregular varieties.

Let $f: X\rightarrow A$ be a morphism from a smooth projective variety to an abelian variety and let $F$ be a connected component of a general fiber of $f$ over its image. Our main result is a Severi type inequality of $X$ which depends on Clifford or Noether type inequalities on $F$.
 Here is one simple version and see Theorem \ref{canonical-bundle-general} for the full statement.

 \begin{theo}Assume that the linear system $|K_F|$ induces a generically finite map of $F$. Then $$\vol(F)\geq 2(\dim X-\dim F)!\chi(f_*\omega_X).$$
 \end{theo}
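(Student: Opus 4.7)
The plan is to present this as the instance of Theorem \ref{canonical-bundle-general} corresponding to the Noether-type bound on $F$ that becomes available when $|K_F|$ is generically finite. The framework, following \cite{BPS} and \cite{JP}, will be the cohomological rank function $\phi(x):=h^0_{f_*\omega_X}(x\underline l)$ of the GV-sheaf $f_*\omega_X$ on $A$ (a GV-sheaf by Hacon's theorem) with respect to a polarization $\underline l$ on $A$. Two features of $\phi$ drive everything: by generic vanishing $\phi(0)=\chi(f_*\omega_X)$, while by generic base change the generic rank of $f_*\omega_X$ on its support $Y=f(X)\subseteq A$ equals $p_g(F)=h^0(F,\omega_F)$.

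\textbf{Slope inequality.} The main technical step, extracted from Theorem \ref{canonical-bundle-general}, will be a slope-type inequality for $\phi$ at the origin. To establish it I would adapt Pardini's \'etale-cover trick to this setting: pull back along the multiplication map $\mu_M:A\to A$ to form $X_M:=X\times_{A,\mu_M}A$ and $f_M:X_M\to A$, use flat base change to identify $f_{M*}\omega_{X_M}=\mu_M^*f_*\omega_X$ so that $\chi(f_{M*}\omega_{X_M})=M^{2\dim A}\chi(f_*\omega_X)$ while the general fiber of $f_M$ is $M^{2\dim A}$ disjoint copies of $F$; then cut $X_M$ by a base-point-free pencil from $|L^{k_M}|$ and take Stein factorization to obtain a fibration $X_M\to B_M$ to a smooth curve, to which Xiao-Barja-Zhang slope inequalities apply. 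The derivative of $\phi$ at the origin, computable by the \cite{JP} formulas after applying the Chen-Jiang decomposition to identify the M-regular components of $f_*\omega_X$ on quotient abelian varieties, should furnish the correct normalization. Letting $M\to\infty$ should then extract an asymptotic bound of the shape
\[
2\cdot(\dim X-\dim F)!\cdot\chi(f_*\omega_X)\;\leq\;2\,p_g(F),
\]
with the error terms scaling in a subleading way relative to the principal one.

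\textbf{Noether input and main obstacle.} The assumption that $|K_F|$ induces a generically finite map converts the right-hand side into $\vol(F)$ via the classical Noether-type inequality $\vol(F)\geq 2p_g(F)-2\dim F$; since $F$ is unchanged under the covers $X_M\to X$ while $\chi(f_*\omega_X)$ scales with $M^{2\dim A}$, the additive correction $-2\dim F$ is absorbed in the limit and gives the desired $\vol(F)\geq 2(\dim X-\dim F)!\,\chi(f_*\omega_X)$. I expect the hard part to be the slope inequality above: the continuous-rank slope bound of \cite{BPS} was set up for the maximal-Albanese-dimension case, where continuous rank suffices and $Y=A$, and extending it to an arbitrary morphism $f:X\to A$ with possibly proper image $Y\subsetneq A$ will require the combined use of the cohomological rank functions of \cite{JP}, the Chen-Jiang decomposition of $f_*\omega_X$ into pullbacks of M-regular sheaves from quotient abelian varieties (to make the derivative formulas applicable summand-by-summand), and a uniform bookkeeping of the lower-order corrections that accumulate as $M\to\infty$.
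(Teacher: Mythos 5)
There is a genuine problem here, and it begins before your argument does: the displayed statement contains a typo, and you have built a proof of the literal (false) version. As part (1) of Theorem \ref{canonical-bundle-general} makes clear, the left-hand side should be $\vol(X)$ (i.e.\ $\vol(K_X)$), not $\vol(F)$. The literal inequality $\vol(F)\geq 2(\dim X-\dim F)!\,\chi(f_*\omega_X)$ cannot hold: take $X=F\times S$ with $S$ of maximal Albanese dimension and $f$ the composition of the second projection with $a_S$; then a connected component of a general fiber is $F$, while $f_*\omega_X=H^0(F,\omega_F)\otimes a_{S*}\omega_S$ gives $\chi(f_*\omega_X)=p_g(F)\,\chi(\omega_S)$, which is unbounded for fixed $F$. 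Because you took the statement at face value, your entire strategy is organized around bounding $\chi(f_*\omega_X)$ \emph{from above} by invariants of $F$ alone, and this is the wrong shape of inequality.

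Concretely, the step that fails is your ``slope inequality'' $2(\dim X-\dim F)!\,\chi(f_*\omega_X)\leq 2p_g(F)$. When $F$ is a point it reads $\chi(\omega_X)\leq 1/(\dim X)!$ for varieties of maximal Albanese dimension, which is absurd; and it is self-defeating under the very covers you use, since $\chi(f_{M*}\omega_{X_M})=M^{2\dim A}\chi(f_*\omega_X)$ while a connected component of a general fiber of $f_M$ is still $F$, so applying the claimed bound to every $X_M$ would force $\chi(f_*\omega_X)\leq 0$. The scaling you invoke to ``absorb'' the Noether correction $-2\dim F$ is exactly what destroys the main term. The actual theorem bounds $\vol(X)$ from below, and the paper proves it by a different mechanism: Theorem \ref{general} runs a double induction on $\dim F$ and $\dim f(X)$, comparing the derivative of $t\mapsto\vol(K_X+tf^*H)$ (via Theorem \ref{BFJ} and restricted volumes to general members $D_M\in|f_M^*H^M|$) with the left derivative of $t\mapsto h^0_{f_*\omega_X}(t\underline{h})$ (Proposition \ref{derivatives}), integrating from the bigness threshold $t_0$ of $(K_X+tf^*H)_c$ to $0$, with the boundary term at $t_0$ controlled through the eventual map and $\delta_1$. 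The role you assign to Noether's inequality is played instead by the birational invariants $\delta(K_F)\geq 2/(\dim F+1)$ and $\delta_1(K_F)\geq 2$ of Lemma \ref{general-clifford} (valid since $F$ is not uniruled), and the hypothesis that $|K_F|$ is generically finite enters only to guarantee, via the Chen--Jiang decomposition, that $(K_X+\epsilon f^*H)_c$ is big so that Theorem \ref{general} applies.
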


Severi type inequalities naturally provide estimates of irregular varieties of general type. We know by the seminar work of Hacon-McKernan \cite{HM}, Takayama \cite{T}, Tsuji \cite{Ts} that in each dimension $n$, there exists a positive lower bound $\epsilon(n)$ of the set of the volumes of smooth projective varieties of general type of dimension $n$. In general this lower bound is quite difficult to compute due to the singularities of minimal models of varieties of general type in dimension $\geq 3$. J.A. Chen and M. Chen proved that $\epsilon(3)\geq \frac{1}{1680}$ in \cite{CC2}. We can also define $\epsilon_I(n)$ the lower bound of the set of the volumes of irregular smooth projective varieties of general type of dimension $n$. J.A. Chen and M. Chen showed that $\epsilon_I(n)\geq \frac{1}{22}$ (see \cite{CC}).

Define $\epsilon_{\mathrm{mAd}}(n)$ to be the lower bound of the set of the volumes of  smooth projective varieties of maximal Albanese dimension, of general type, and of dimension $n$. Then we have the obvious bound $$\epsilon_I(n)\leq \mathrm{min}\{\binom{n}{k}\epsilon_{\mathrm{mAd}}(k)\epsilon(n-k)\mid 1\leq k\leq n\}.$$ It would be interesting to prove a similar lower bound for $\epsilon_I(n)$.

Our first application of Severi type inequality is to show that $\epsilon_{\mathrm{mAd}}(n)=2n!$. In \cite{BPS2}, the authors studied varieties $X$ on the Severi line, namely $\vol(X)=2(\dim X)!$ and showed that $a_X$ is always a degree $2$ cover. With the help of this  result, we can completely describe $n$-dimensional varieties $X$ of maximal Albanese dimension with $\vol(X)=2n!$.
\begin{theo}
Assume that $X$ is a smooth projective variety of maximal Albanese dimension and of general type. Then $\vol(X)\geq 2(\dim X)!$. If the equality holds, then $A_X$ admits a principal polarization $\Theta$ and the canonical model $X_{\mathrm{can}}$ is a flat double cover of $A_X$ branched over a divisor $D\in |2\Theta|$.
\end{theo}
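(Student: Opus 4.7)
The plan is to split the argument according to the value of $\chi(\omega_X)$, applying the Barja--Zhang Severi inequality when $\chi(\omega_X) \geq 1$ and an Ein--Lazarsfeld/Chen--Jiang structural argument when $\chi(\omega_X) = 0$, and then to invoke the \cite{BPS2} classification of varieties on the Severi line for the equality analysis.

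If $\chi(\omega_X) \geq 1$, the Severi inequality of Barja--Zhang,
$$\vol(X) \geq 2(\dim X)!\,\chi(\omega_X) \geq 2(\dim X)!,$$
already gives the bound, with equality forcing $\chi(\omega_X) = 1$ and placing $X$ on the Severi line. If instead $\chi(\omega_X) = 0$, then since $X$ has maximal Albanese dimension, Ein--Lazarsfeld (refined by Chen--Jiang) ensures that $a_X(X)$ is fibered by positive-dimensional subtori: one obtains, after desingularization, a non-trivial fibration $f\colon X \to A$ with $A$ a proper quotient of $A_X$, whose general fibers $F$ are smooth projective of maximal Albanese dimension and satisfy $\chi(\omega_F) > 0$. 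Applying the main Severi-type inequality of the present paper to $f$, combined with Barja--Zhang on $F$ (or the theorem itself, used inductively on $\dim X$), should yield the strict inequality $\vol(X) > 2(\dim X)!$. Thus equality is incompatible with $\chi(\omega_X) = 0$.

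In the equality case we may therefore assume $\chi(\omega_X) = 1$, so $X$ lies on the Severi line. By \cite{BPS2}, the Albanese map $a_X\colon X_{\mathrm{can}} \to A_X$ is surjective and finite of degree $2$. Since $X_{\mathrm{can}}$ has canonical (hence Cohen--Macaulay) singularities and $A_X$ is smooth, $a_X$ is flat. Any flat double cover is determined by a line bundle $L$ on $A_X$ and a section of $L^{\otimes 2}$, so that $(a_X)_*\cO_{X_{\mathrm{can}}} = \cO_{A_X} \oplus L^{-1}$ with branch divisor $D \in |2L|$. Since $K_{A_X} = 0$, adjunction gives $K_{X_{\mathrm{can}}} = a_X^*L$, and hence $\vol(X_{\mathrm{can}}) = 2\,L^{\dim X}$. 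The equality $\vol(X_{\mathrm{can}}) = 2(\dim X)!$ forces $L^{\dim X} = (\dim X)!$; since $L$ is ample on $A_X$ (as $K_{X_{\mathrm{can}}}$ is big and $a_X$ is finite), the numerical criterion identifies $L$ with a principal polarization $\Theta$, and $D \in |2\Theta|$.

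The chief obstacle is the $\chi(\omega_X) = 0$ step: transforming the Ein--Lazarsfeld/Chen--Jiang structural statement into a strict (rather than merely non-strict) improvement over the Severi inequality, likely via induction on $\dim X$ carefully aligned with the hypotheses of the paper's main Severi-type inequality. Once this reduction is in place, the equality analysis is essentially a direct computation with flat double covers of principally polarized abelian varieties.
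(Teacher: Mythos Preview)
Your equality analysis contains a genuine gap. From \cite{BPS2} one only knows that $a_X\colon X\to A_X$ is \emph{generically} of degree $2$; it does \emph{not} follow that the induced map $X_{\mathrm{can}}\to A_X$ is finite. Writing the Stein factorization $X\to\overline{X}\xrightarrow{g} A_X$, the finite degree-$2$ part is $g$, and $\overline{X}$ is normal Gorenstein with $g_*\omega_{\overline X}=\cO_{A_X}\oplus H$ for an ample line bundle $H$. But a priori $a_{X*}\omega_X=\cO_{A_X}\oplus(H\otimes\cI_Z)$ with $Z$ a subscheme of codimension $\ge 2$, and $X_{\mathrm{can}}=\overline X$ (equivalently, $X_{\mathrm{can}}\to A_X$ is finite and flat) only once you know $Z=\varnothing$. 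Your Cohen--Macaulay argument for flatness presupposes finiteness, which is exactly what is at stake. The paper's Proposition~\ref{equality} proves $Z=\varnothing$ and $\chi(H)=1$ by comparing the polynomials $\vol(K_X+ta_X^*H)$ and $h^0_{a_{X*}\omega_X}(t\underline h)$ near $t=0^-$ and using a Fourier--Mukai computation to identify $R^0\Phi_{\cP}(H\otimes\cI_Z)$ as a line bundle; matching coefficients then forces $H$ to be a principal polarization. This is the substantive content you have skipped.

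Your treatment of the case $\chi(\omega_X)=0$ is, as you admit, only a sketch, and the sketch does not match what is actually needed. The Ein--Lazarsfeld/Chen--Jiang structure does not hand you a fibration whose general fiber $F$ has $\chi(\omega_F)>0$; the paper instead uses the setting of \cite{JLT} to locate a maximal component $[Q]+\widehat B\subset V^k(\omega_X)$, splits into the cases $Q$ trivial and $Q$ nontrivial, and in each case appeals to Kawamata's product inequality $\vol(X)\ge\binom{\dim X}{\dim Y}\vol(Y)\vol(F)$ (after, in the second case, passing to a carefully chosen \'etale cover) together with the induction hypothesis. The base case $\dim X=3$ is handled separately via \cite{CDJ}. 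None of this is supplied by the Severi-type inequality of the paper applied to $f$, and a strict inequality requires the detailed analysis of the boundary case in the nontrivial-$Q$ branch.
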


We can also provide a better bound for $\epsilon_I(3)$, which is indeed close to the optimal bound.
 \begin{theo}$\epsilon_I(3)\geq \frac{3}{8}$.
\end{theo}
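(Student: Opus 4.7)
The plan is to analyze the Albanese morphism $a_X \colon X \to A_X$ of an irregular smooth projective threefold $X$ of general type, and split into cases according to $d := \dim a_X(X) \in \{1,2,3\}$. After replacing $X$ by a suitable birational model, one may assume $a_X$ is a morphism with connected fibers after Stein factorization.

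When $d = 3$, the variety $X$ is of maximal Albanese dimension and the previous theorem of this paper gives $\vol(X) \geq 2 \cdot 3! = 12$, far stronger than $3/8$. When $d = 2$, the general Albanese fiber is a smooth curve of genus $g \geq 2$, and Zhang's refined inequality quoted above yields $\vol(X) \geq 12 \cdot \frac{g-1}{g+1} \chi(a_{X*}\omega_X)$. Since $\chi(a_{X*}\omega_X) \geq 0$ by generic vanishing, the bound is nontrivial as soon as $\chi \geq 1$, in which case $\vol(X) \geq 4$. When $\chi(a_{X*}\omega_X) = 0$, I would use the cohomological rank function machinery developed in this paper, in particular the left and right derivatives of $h^0_{a_{X*}\omega_X}(x\underline{l})$ at $x = 0$, to extract a positive lower bound by a limit argument.

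The main case is $d = 1$. Let $f \colon X \to Y$ denote the Stein factorization of $a_X$, so $Y$ is a smooth curve of genus $b \geq 1$ and the general fiber $F$ is a smooth projective surface with $K_F = K_X|_F$. Applying Theorem \ref{canonical-bundle-general} to $f$ gives a Severi-type inequality bounding $\vol(X)$ from below in terms of $\chi(f_*\omega_X)$ together with Clifford- or Noether-type invariants of $F$. Combining this with the classical Severi inequality $\vol(F) \geq 4\chi(\omega_F)$ when $F$ is of maximal Albanese dimension, or with a slope inequality for threefold fibrations over a positive-genus curve otherwise, should yield $\vol(X) \geq 3/8$. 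The constant $3/8$ is expected to emerge from optimizing the inequality in the extremal situation where $F$ is a surface of general type with invariants close to the Noether line and $Y$ is an elliptic curve.

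The principal obstacle is the $d = 1$ case when $\chi(f_*\omega_X)$ vanishes or when $|K_F|$ fails to induce a generically finite map on $F$, so that the simple Severi-type inequality becomes vacuous. There I would fall back on the full strength of Theorem \ref{canonical-bundle-general}, which incorporates Clifford-type inequalities on $F$, and on the continuous rank function techniques of \cite{BPS} applied to suitable \'etale covers $X_M \to X$. A secondary technical point is tracking precisely how $\vol$ and $\chi$ scale under these covers so that the resulting bound is sharp enough to give $3/8$ rather than a weaker constant.
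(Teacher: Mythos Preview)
Your case division by $d=\dim a_X(X)$ matches the paper's, and the $d=3$ case is fine. However, your treatment of the two remaining cases has a genuine gap.

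For $d=2$ (curve fiber) you invoke Zhang's inequality, but the real issue is when $h^0(a_{X*}\omega_X\otimes Q)=0$ for generic $Q$, where that inequality is vacuous. Your ``cohomological rank function machinery / limit argument'' is not specific enough to produce a bound. The paper's actual argument (Proposition~\ref{fiber1}) is structural: when the M-regular part of $a_{X*}\omega_X$ vanishes, the decomposition theorem forces $a_X$ to be surjective and $V^0(a_{X*}\omega_X)$ to contain torsion translates of at least two distinct elliptic curves in $\Pic^0(X)$; projecting to one of them turns $X$ into a fibration over an elliptic curve with \emph{irregular surface} fibers, to which Corollary~\ref{surface-canonical} applies to give $\vol(X)\geq 2$.

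The more serious gap is in the $d=1$ case (surface fiber $F$). You propose to apply Theorem~\ref{canonical-bundle-general} to $K_X$ and then optimize. But in the extremal situation you yourself identify---$A_X$ an elliptic curve---one has $h^0(a_{X*}\omega_X\otimes Q)=0$ generically, so every inequality of the form $\vol(K_X)\geq C\cdot h^0(a_{X*}\omega_X\otimes Q)$ is empty, and no amount of refining the constant via Clifford/Noether on $F$ or passing to \'etale covers $X_M$ helps: the right-hand side stays zero. The paper's key maneuver, which your proposal does not isolate, is to switch to the \emph{bicanonical} bundle: since $a_{X*}\omega_X^{2}$ is IT$^0$, one always has $h^0(a_{X*}\omega_X^{2}\otimes Q)\geq 1$. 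Applying Theorem~\ref{general} to $L=2K_X$ yields $8\vol(K_X)\geq 2h^0(a_{X*}\omega_X^{2}\otimes Q)$, which already gives $\vol(X)\geq\tfrac12$ once that $h^0$ is $\geq 2$. The residual case $h^0(a_{X*}\omega_X^{2}\otimes Q)=1$ forces $a_{X*}\omega_X^{2}$ to be a \emph{stable} degree-$1$ bundle on the elliptic curve, and then the semistable refinement in Remark~\ref{variant2} gives
\[
\vol(K_X)\;\geq\;\frac{3\,\vol(K_F)}{2\bigl(\chi(\cO_F)+\vol(K_F)\bigr)}.
\]
Noether's (and Debarre's) inequality on $F$ shows the right-hand side is $\geq\tfrac38$, with the minimum attained exactly when $p_g(F)=2$, $q(F)=0$, $\vol(K_F)=1$. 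This is where $\tfrac38$ actually comes from; it is invisible if one works only with $K_X$.
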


\subsection*{Acknowledgements}
We thank Fabrizio Catanese, Yong Hu, Mart\'i Lahoz, Giuseppe Pareschi and Tong Zhang for stimulating conversations. We thank Olivier Debarre for reading the draft carefully. Parts of this work were written during the author's visits to the Graduate School of Mathematical Sciences in the University of Tokyo and National Center for Theoretical Sciences in Taipei and we thank  Jheng-Jie Chen, Jungkai Alfred Chen, Yoshinori Gongyo, and Yusuke Nakamura for the warm hospitality.
\section{Preliminaries}
\subsection{Maximal continuously globally generated subsheaves}
\begin{defi} A coherent sheaf $\cF$ on an abelian variety $A$ is said to be continuously globally generated if for any open subset $U$ of $\Pic^0(A)$, the evaluation map $$\mathrm{ev}_U: \bigoplus_{Q\in U}H^0(A, \cF\otimes Q)\otimes Q^{-1}\rightarrow \cF$$ is surjective.
\end{defi}
\begin{rema} Continuously globally generated sheaves were introduced by Pareschi and Popa in \cite{PP}. They proved that M-regular sheaves are indeed continuously globally generated. Debarre \cite{D} gave a nice characterization: a coherent sheaf $\cF$ is  continuously globally generated if and only if there exists $M\in \mathbb Z$ such that $\mu_M^*\cF\otimes Q$ is  globally generated for any $Q\in \Pic^0(A)$. Moreover, a continuously globally generated sheaf is ample.
\end{rema}
\begin{lemm}\label{etale}
Let $\cF$ be a coherent sheaf on $A$. Then there exists a continuously globally generated  subsheaf $\cF_c$ of $\cF$ such that $h^0(A, \cF_c\otimes Q)=h^0(A, \cF\otimes Q)$ for $Q\in \Pic^0(A)$ general. Moreover, for any isogeny $\mu: A'\rightarrow A$ between abelian varieties, $(\mu^*\cF)_c=\mu^*\cF_c$.
\end{lemm}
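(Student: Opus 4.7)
The plan is to construct $\cF_c$ explicitly as the image of the evaluation map on a sufficiently small non-empty open of $\Pic^0(A)$, and then verify its properties from this construction. For any non-empty open $U \subset \Pic^0(A)$, let $\cF^U \subset \cF$ denote the image of $\mathrm{ev}_U$; shrinking $U$ can only shrink this image, so by Noetherianity of $\cF$ the family $\{\cF^U\}$ stabilizes. Hence there exists a non-empty open $U_0$, which I would take inside the locus where $h^0(A, \cF \otimes Q)$ equals its generic value, such that $\cF^U = \cF^{U_0}$ for every non-empty $U \subset U_0$. (Irreducibility of $\Pic^0(A)$ guarantees that intersections with $U_0$ remain non-empty.) I set $\cF_c := \cF^{U_0}$.

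Next I would verify the cohomology equality and continuous global generation simultaneously. The key point is that for $Q \in U_0$, the evaluation map $H^0(A, \cF \otimes Q) \otimes Q^{-1} \to \cF$ factors through $\cF_c$ by construction; twisting by $Q$ and taking global sections yields a splitting of the natural inclusion $H^0(A, \cF_c \otimes Q) \hookrightarrow H^0(A, \cF \otimes Q)$, forcing equality. This gives the desired $h^0$ equality for general $Q$. For continuous global generation, given any non-empty open $W \subset \Pic^0(A)$, the intersection $W \cap U_0$ is non-empty and
$$\bigoplus_{Q \in W \cap U_0} H^0(A, \cF_c \otimes Q) \otimes Q^{-1} = \bigoplus_{Q \in W \cap U_0} H^0(A, \cF \otimes Q) \otimes Q^{-1} \twoheadrightarrow \cF_c$$
by the $h^0$ equality and stabilization; the evaluation over $W$ factors through this and is therefore surjective. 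The same factoring argument shows that $\cF_c$ is in fact the \emph{maximal} continuously globally generated subsheaf of $\cF$: any c.g.g.\ $\cG \subset \cF$ equals the image of its own evaluation over $U_0$, which lies inside $\cF^{U_0} = \cF_c$.

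For the compatibility $(\mu^* \cF)_c = \mu^* \cF_c$, I would combine this maximality with descent along $\mu$. Debarre's criterion goes in both directions (using that pullback by $\mu$ is faithfully flat and that $\mu^*: \Pic^0(A) \to \Pic^0(A')$ is an isogeny, hence surjective), so the c.g.g.\ property is preserved both by pullback along $\mu$ and by descent along $\mu$. Hence $\mu^* \cF_c$ is c.g.g.\ on $A'$, and maximality yields $\mu^* \cF_c \subset (\mu^* \cF)_c$. Conversely, $\mu^* \cF$ carries canonical isomorphisms $t_k^* \mu^* \cF \cong \mu^* \cF$ for $k \in \ker \mu$; since c.g.g.\ is translation-invariant and $(\mu^*\cF)_c$ is maximal hence canonical, it is $\ker(\mu)$-stable and therefore descends to some $\cG \subset \cF$ with $\mu^* \cG = (\mu^* \cF)_c$. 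Debarre's criterion in the reverse direction forces $\cG$ to be c.g.g., so $\cG \subset \cF_c$ and $(\mu^* \cF)_c = \mu^*\cG \subset \mu^* \cF_c$, giving the claimed equality.

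The main subtlety I anticipate is this last step: establishing the $\ker(\mu)$-equivariance of $(\mu^*\cF)_c$ and carrying out the effective descent back to $A$ requires some care, as does verifying that Debarre's criterion passes c.g.g.\ across an isogeny in both directions. The construction of $\cF_c$ and the $h^0$ equality are, by contrast, essentially formal consequences of Noetherianity and of the factoring of the evaluation map through $\cF_c$.
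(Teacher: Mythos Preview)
Your construction of $\cF_c$ as the image of $\mathrm{ev}_{U_0}$ and your argument for the $h^0$ equality are essentially the paper's, but there is one genuine gap and one unnecessary complication.

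\textbf{The gap.} Noetherianity of $\cF$ gives the \emph{ascending} chain condition on subsheaves, not the descending one; a decreasing family such as $\{\cF^U\}_U$ has no reason to stabilize for that reason alone (think of $(x)\supset(x^2)\supset\cdots$ inside $\cO_{\mathbb A^1}$). The paper sidesteps this by \emph{defining} $U_0$ to be the open locus where $h^0(A,\cF\otimes Q)$ takes its minimal value, and then proving the stabilization $\cF^V=\cF^{U_0}$ for any non-empty $V\subset U_0$ via semicontinuity. In fact your own factoring argument, applied to $\cF^V$, gives $h^0(\cF^V\otimes Q)=h^0(\cF\otimes Q)$ for $Q\in V$; upper semicontinuity for $\cF^V$, together with the obvious bound $h^0(\cF^V\otimes Q)\le h^0(\cF\otimes Q)$, then forces this equality for every $Q\in U_0$. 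Hence each map $H^0(\cF\otimes Q)\otimes Q^{-1}\to\cF$ with $Q\in U_0$ already factors through $\cF^V$, so $\cF^{U_0}\subset\cF^V$. This is exactly what you need for the displayed surjection in your c.g.g.\ argument, but the justification has to be semicontinuity, not Noetherianity.

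\textbf{The isogeny step.} Your descent argument can be made to work, but the paper's route is much shorter and avoids the equivariance and reverse-Debarre subtleties you flag. After noting $\mu^*\cF_c\subset(\mu^*\cF)_c$, one simply computes for general $Q'=\mu^*Q\in\Pic^0(A')$ (recall $\hat\mu:\Pic^0(A)\to\Pic^0(A')$ is surjective)
\[
h^0(A',\mu^*\cF_c\otimes Q')=(\deg\mu)\,h^0(A,\cF_c\otimes Q)=(\deg\mu)\,h^0(A,\cF\otimes Q)=h^0(A',\mu^*\cF\otimes Q').
\]
Thus $\mu^*\cF_c\subset(\mu^*\cF)_c$ are two c.g.g.\ subsheaves with the same $H^0(-\otimes Q')$ for general $Q'$; evaluating $(\mu^*\cF)_c$ over a small open then forces $(\mu^*\cF)_c\subset\mu^*\cF_c$, with no descent needed.
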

\begin{proof}
Let $U_0$ be the open subset of $\Pic^0(A)$ consisting of $Q$ such that $h^0(A, \cF\otimes Q)$  takes the minimum value. Let $\cF_c $ be the image of the evaluation map $$ev_{U_0}: \bigoplus_{Q\in U_0}H^0(A, \cF\otimes Q)\otimes Q^{-1}\rightarrow \cF.$$ Then it is clear that $h^0(A, \cF_c\otimes Q)=h^0(A, \cF\otimes Q)$ for $Q\in \Pic^0(A)$ general.  It suffices to prove that $\cF_c$ is continuously globally generated. We just need to show that for any $V\subset U_0$, the image $\cF_V$ of $\mathrm{ev}_V$ is exactly $\cF_{c}$. Note that $\cF_V\subset \cF_c$. Moreover, for $Q\in V$, $h^0(A, \cF_V\otimes Q)=h^0(A, \cF_c\otimes Q)$ is the generic value. Hence by the semicontinuity theorem, for all $Q\in U_0$, we still have $h^0(A, \cF_V\otimes Q)=h^0(A, \cF_c\otimes Q)$. Thus the image of $\mathrm{ev}_V$ is the image of $\mathrm{ev}_{U_0}$.

For the second statement, we note that $\mu^*\cF_c\subset (\mu^*\cF)_c$ and $$h^0(A', \mu^*\cF_c\otimes Q')=(\deg\mu) h^0(A, \cF_c\otimes Q)=(\deg\mu) h^0(A, \cF\otimes Q)=h^0(A', \mu^*\cF\otimes Q'),$$ where $Q\in \Pic^0(A)$ general and $\mu^*Q=Q'$.  Hence  $(\mu^*\cF)_c=\mu^*\cF_c$.
\end{proof}

\begin{defi}
We call $\cF_c$ the maximal continuously globally generated subsheaf of $\cF$.
\end{defi}
\begin{rema}\label{decom} By the decomposition theorems in  \cite{CJ} and \cite{PPS}, we know that given a morphism $f: X\rightarrow A$ from a smooth projective morphism to an abelian variety, there exists a canonical decomposition
\begin{eqnarray}\label{decomposition}
f_*\omega_X=\cF_X\bigoplus_{p_B}\bigoplus_ip_B^*\cF_{B,i}\otimes Q_{B,i},
\end{eqnarray}
where $\cF_X$ is either $0$ or a M-regular sheaf on $A$, $p_B: A\rightarrow B$ are quotients between abelian varieties, $\cF_{B,i}$ are M-regular on $B,$ and $Q_{B,i} $ are torsion line bundles on $A$.
It is clear that $(f_*\omega_X)_c=\cF_X$.
\end{rema}

\begin{rema}
It is worth noting that M-regular sheaves are continuously globally generated but the converse is not true in general. The following is an example.

Let $i: C\hookrightarrow A_C$ be the Abel-Jacobi embedding of a genus $g\geq 2$ curve. Let $\Theta$ be the theta divisor of $A_C$. Then we know by \cite{JP} that $h^0_{i_*\cO_C}(x\Theta)=x^g$ and $h^1_{i_*\cO_C}(x\Theta)=g-1-gx+x^g$ for $0\leq x\leq 1$. This statement implies that for any $k\geq 2$ integer, let $i_k: C_k\hookrightarrow A_C$ be the pullback of $i: C\hookrightarrow A_C$ be the isogeny $\mu_k$. Then $h^0(A_C, i_{k*}\cO_{C_k}\otimes \Theta^k\otimes Q)=k^g$ and $h^1(A_C, i_{k*}\cO_{C_k}\otimes \Theta^k\otimes Q)=k^{2g}(g-1)-gk^{2g-1}+k^g$.

Let $\cL=i_{k*}\cO_{C_k}\otimes \Theta^k$ and let $\cL_c$ be its maximal continuously globally generated subsheaf. Then $h^0(A, \cL_c\otimes Q)=h^0(A, \cL\otimes Q)>0$ for $Q\in\Pic^0(A)$ general. Note that the cokernel of $\cL_c\hookrightarrow \cL$  is a $0$-dimensional sheaf. Hence $h^1(A, \cL_c\otimes Q)>0$ for $Q\in\Pic^0(A)$ general. Thus $\cL_c$ is continuously globally generated but is not M-regular.
\end{rema}

More generally, for a coherent sheaf $\cF$ on $A$ and a line bundle $H$ of $A$, we consider the $\mathbb Q$-twisted sheaf $\cF\otimes H^x$ for $x\in \mathbb Q$ as in \cite{JP}. For $M\in \mathbb Z$ such that $M^2x\in \mathbb Z$, the sheaf $(\mu_M^*\cF\otimes H^{M^2x})_c$ is well-defined. Moreover, by Lemma \ref{etale}, $(\mu_{NM}^*\cF\otimes H^{N^2M^2x})_c=\mu_N^*(\mu_M^*\cF\otimes H^{M^2x})_c$ for any $N\in\mathbb Z$. Hence we will formally define $(\cF\otimes H^x)_c$ upto abelian \'etale covers.

We will apply the following notations.

Given $f: X\rightarrow A$  a primitive morphism from a smooth projective variety to an abelian variety. For any coherent sheaf $\cQ$ on $X$, we denote by $\cQ_c$ the image of the natural map $f^*(f_*\cQ)_c\rightarrow \cQ$.

 If $\cQ$ is a line bundle on $X$, we always take a birational modification such that $\cQ_c$ is also locally free and hence is a nef line bundle. Indeed, $\mu_M^*\cQ$ is globally generated for some $M\in \mathbb Z$. Similarly, we can define $(\cQ\otimes f^*H^x)_c$ for $H$ a line bundle on $A$ and $x\in\mathbb Q$, upto abelian \'etale covers and birational modifications.

\subsection{The eventual maps}
 \begin{defi}
 Given $f: X\rightarrow A$ a primitive morphism from a smooth projective variety to an abelian variety. Let $\cL$ be a line bundle on $X$ with $\cL_c$ non-zero. We denote by $\varphi_{\cL}: X \dashrightarrow\mathbb Z_{\cL}\hookrightarrow \mathbb{P}_A((f_*\cL)_c)$ the relative evaluation map, where $Z_{\cL}$ is the image of $\varphi_{\cL}$. We call $\varphi_{\cL}$ the eventual map of $\cL$ (with respect to $f$).
 \end{defi}

By the following lemma,  our definition of the eventual map is compatible with the one of Barja, Pardini, and Stoppino in \cite{BPS}. Following \cite{BPS}, we will call $\varphi_{K_X}$ the eventual paracanonical map of $X$.

\begin{lemm}\label{eventual}Let $M\in \mathbb N_{>0}$. Consider the \'etale base change
\begin{eqnarray*}
\xymatrix{
X_M\ar[r]^{\mu_M}\ar[d]^{f_M} & X\ar[d]^f\\
A\ar[r]^{\pi_M} & A.}
\end{eqnarray*}
Let $\varphi_M: X_M\dashrightarrow Z_M\hookrightarrow \mathbb P(H^0(X_M, \mu_M^*\cL\otimes f_M^*Q))$ be the natural evaluation map for $Q\in\Pic^0(A)$ general. Assume that $M$ is sufficiently large, $\varphi_M$ is birationally equivalent to the base change by $\mu_M$ of $\varphi_{\cL}$.
\end{lemm}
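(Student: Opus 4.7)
The plan is to combine flat base change with Debarre's characterization of continuously globally generated sheaves, reducing the assertion to the statement that for $M$ sufficiently large the pulled back sheaf $\mu_M^*((f_*\cL)_c)\otimes Q$ embeds the relative projective bundle birationally into the projectivization of its global sections.

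First, I would apply flat base change to the Cartesian square to obtain $f_{M*}(\mu_M^*\cL)\cong \mu_M^*(f_*\cL)$, and combine this with the second part of Lemma \ref{etale} to conclude that $(f_{M*}(\mu_M^*\cL))_c = \mu_M^*((f_*\cL)_c)$. For general $Q\in\Pic^0(A)$, writing $V=H^0(X_M,\mu_M^*\cL\otimes f_M^*Q)$ and using the defining property of $(\cdot)_c$, this identification yields
\begin{eqnarray*}
V=H^0(A,\mu_M^*(f_*\cL)\otimes Q)=H^0(A,\mu_M^*((f_*\cL)_c)\otimes Q),
\end{eqnarray*}
so the sections defining $\varphi_M$ correspond exactly to the sections of the pulled back continuously globally generated sheaf twisted by $Q$. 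Next, since $(f_*\cL)_c$ is continuously globally generated by construction, Debarre's characterization recalled in the remark after the definition supplies an integer $M_0$ such that for every $M$ divisible by $M_0$ and every $Q\in\Pic^0(A)$ the sheaf $\mu_M^*((f_*\cL)_c)\otimes Q$ is globally generated. The resulting surjection $V\otimes\cO_A\twoheadrightarrow \mu_M^*((f_*\cL)_c)\otimes Q$ then yields a closed embedding $\mathbb P_A(\mu_M^*((f_*\cL)_c))\hookrightarrow \mathbb P(V)\times A$, and the base change by $\mu_M$ of $\varphi_\cL$ fits into the commutative diagram
\begin{eqnarray*}
X_M\dashrightarrow \mathbb P_A(\mu_M^*((f_*\cL)_c))\hookrightarrow \mathbb P(V)\times A,
\end{eqnarray*}
whose composition with the projection onto $\mathbb P(V)$ is exactly $\varphi_M$. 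Write $\widetilde Z_\cL$ for the image of the first arrow, which is the base change by $\mu_M$ of $Z_\cL$.

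What remains, and this is the step I expect to be the main obstacle, is to check that for $M$ large enough the projection $\mathbb P_A(\mu_M^*((f_*\cL)_c))\to\mathbb P(V)$ is birational onto its image when restricted to $\widetilde Z_\cL$, so that $\widetilde Z_\cL\to Z_M$ is birational and $\varphi_M$ is birationally equivalent to the base change of $\varphi_\cL$. I would tackle this by exploiting that a continuously globally generated sheaf on an abelian variety is ample (again by Debarre's characterization): after possibly further enlarging $M$, the restriction $V\to (\mu_M^*((f_*\cL)_c)\otimes Q)_a\oplus (\mu_M^*((f_*\cL)_c)\otimes Q)_{a'}$ becomes surjective for general pairs of distinct points $a,a'\in A$. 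Such a separation property forces the fibers of $\mathbb P_A(\mu_M^*((f_*\cL)_c))\to A$ over distinct general points of $A$ to map into disjoint linear subspaces of $\mathbb P(V)$, yielding the desired generic injectivity of $\widetilde Z_\cL\to Z_M$ and hence the lemma.
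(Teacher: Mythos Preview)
Your setup is correct and matches the paper's: you correctly use flat base change and Lemma~\ref{etale} to identify $(f_{M*}\mu_M^*\cL)_c$ with $\mu_M^*((f_*\cL)_c)$, and you correctly obtain the factorization $\varphi_M\colon X_M\dashrightarrow \widetilde Z_{\cL}\to Z_M$ by embedding the relative projective bundle into $\mathbb P(V)\times A$ via Debarre's global generation result. The remaining step---showing that $\widetilde Z_{\cL}\to Z_M$ is birational---is indeed the crux, and here your argument diverges from the paper's and has a gap.

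Your justification for the two-point separation property is insufficient. Ampleness of a vector bundle $\cE$ on $A$ means that $\cO_{\mathbb P_A(\cE)}(1)$ is ample; it does \emph{not} imply that $H^0(A,\cE)\to\cE_a\oplus\cE_{a'}$ is surjective for general $a\neq a'$, and it is not clear that passing to $\mu_M^*\cE$ for larger $M$ remedies this. What you actually need is a statement of the type ``$\mu_M^*((f_*\cL)_c)\otimes Q\otimes I_a$ is globally generated at a general point $a'$,'' a $1$-separation property strictly stronger than global generation, and this does not follow from ampleness or continuous global generation without further argument.

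The paper bypasses this difficulty by a more direct route. It fixes a very ample line bundle $H$ on $A$ and uses the continuity of the cohomological rank function $h^0_{(f_*\cL)_c}(x\underline h)$ at $x=0$ (from \cite{JP}) to deduce that $h^0(A,\pi_M^*(f_*\cL)_c\otimes H^{-1}\otimes Q)\neq 0$ for $M$ sufficiently large. A nonzero such section yields an inclusion of linear series $H^0(A,H)\hookrightarrow V$, so the very ample system $|f_M^*H|$ is a sub-series of $|V|$; hence $f_M$ factors through $\varphi_M$. Combined with global generation of $\pi_M^*(f_*\cL)_c\otimes Q$ (which you already have, and which makes $\varphi_M$ and the base-changed $\varphi_{\cL}$ agree on each fiber of $f_M$), this immediately gives that $\widetilde Z_{\cL}\to Z_M$ is birational. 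You should either supply an honest proof of your separation claim---which would require an additional positivity input beyond Debarre's characterization---or replace that final step with the paper's factorization-through-$f_M$ argument.
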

\begin{proof}
By assumption $\cL_c$ is not zero. Hence $H^0(X, \cL\otimes f^*Q)=H^0(X, \cL_c\otimes f^*Q)\neq 0$ for $Q\in\Pic^0(A)$ general.

Let $\varphi_{\cL, M}: X_M\rightarrow Z_{\cL, M}$ be the base change by $\pi_M$ of $\varphi_{\cL}$.
Then $\varphi_{\cL, M}$ is the relative evaluation map $X_M \dashrightarrow\mathbb Z_{\cL, M}\hookrightarrow \mathbb{P}_A(\pi_{M}^*(f_*\cL)_c)$. By Lemma \ref{etale}, we can regard $\varphi_{\cL, M}$ as $$X_M \dashrightarrow\mathbb Z_{\cL, M}\hookrightarrow \mathbb{P}_A(f_{M*}(\mu_M^*\cL)_c)= \mathbb{P}_A(f_{M*}(\mu_M^*\cL)_c\otimes Q).$$
Since $H^0(X_M, \mu_M^*\cL\otimes Q)=H^0(X_M, \mu_M^*\cL_c\otimes Q)$, we have a natural factorization of $$\varphi_M: X_M\xrightarrow{\varphi_{\cL, M}} Z_{\cL, M} \xrightarrow{\rho} Z_M.$$ We just need to prove that $\rho$ is birational.   Fix a very ample line bundle $H$ on $A$ and let $\underline{h}$ be its class in N\'eron-Severi group,  by \cite{JP} we know from the continuity of cohomological rank functions that $h^0_{f_*\cL_c}(x\underline{h})\neq 0$ for $\epsilon\in \mathbb Q_{>0}$ sufficiently small, which means that for $M$ sufficiently large we have $h^0(A, \pi_M^*f_*\cL_c\otimes H^{-1}\otimes Q)\neq 0$. Hence $f_M$ factors through $\varphi_M$. Moreover, by the  main result \cite{D}, we know that $\pi_M^*f_*\cL_c\otimes Q$ is globally generated for $M$ sufficiently large. Thus $\varphi_M$ and $\varphi_{\cL, M}$ are equivalent restricted on the generic point of $f_M(X_M)$. Hence  $\varphi_M$ and $\varphi_{\cL, M}$ are birationally equivalent.

 \end{proof}
\begin{coro}
$\kappa(\cL)\geq \kappa(\cL_c)=\mathrm{Nm}(\cL_c)=\dim \varphi_{\cL}(X)$.
\end{coro}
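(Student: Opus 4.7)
The plan is to prove the inequality and the two equalities in turn. After a birational modification ensuring $\cL_c$ is a nef line bundle, the natural factorization $f^*(f_*\cL)_c \twoheadrightarrow \cL_c \hookrightarrow \cL$ realizes $\cL_c$ as a subsheaf of $\cL$, so $\cL = \cL_c(E)$ for some effective divisor $E$. Tensoring yields $\cL_c^{\otimes m} \hookrightarrow \cL^{\otimes m}$ for every $m \geq 0$, hence $h^0(\cL_c^{\otimes m}) \leq h^0(\cL^{\otimes m})$ and the first inequality $\kappa(\cL) \geq \kappa(\cL_c)$ follows.

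Next I would reduce the two equalities to a situation where $\cL_c$ is globally generated. By Lemma \ref{etale} (and its extension to line bundles on $X$ recorded just before the definition of the eventual map), the operation $(\cdot)_c$ commutes with abelian \'etale base change: for the Cartesian square with $\mu_M: X_M \to X$ sitting over $\pi_M: A \to A$, one has $\mu_M^*\cL_c = (\mu_M^*\cL)_c$. By Debarre's characterization of continuously globally generated sheaves (the remark following the definition), there exists $M$ large enough that $\mu_M^*(f_*\cL)_c \otimes Q$ is globally generated on $A$ for every $Q \in \Pic^0(A)$; pulling back by $f_M$ and taking images in $\mu_M^*\cL \otimes f_M^*Q$ shows that $\mu_M^*\cL_c \otimes f_M^*Q$ is globally generated on $X_M$ for every $Q$. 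Since both Iitaka and numerical dimensions are invariant under finite \'etale cover, it suffices to establish the equalities for this nef line bundle on $X_M$.

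For general $Q \in \Pic^0(A)$ one has $H^0(X_M, \mu_M^*\cL \otimes f_M^*Q) = H^0(X_M, \mu_M^*\cL_c \otimes f_M^*Q)$, so the morphism this linear system defines is precisely the map $\varphi_M$ of Lemma \ref{eventual}, which is birationally the pullback of $\varphi_\cL$ under $\pi_M$; its image therefore has dimension $\dim \varphi_\cL(X)$. Because $\mu_M^*\cL_c \otimes f_M^*Q$ is globally generated and hence semiample, its Iitaka and numerical dimensions coincide with this image dimension. Numerical dimension is insensitive to $\Pic^0$-twists, so $\mathrm{Nm}(\mu_M^*\cL_c) = \dim \varphi_\cL(X)$. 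For $\kappa(\mu_M^*\cL_c)$, choose $k$-torsion line bundles $Q_1, \dots, Q_k \in \Pic^0(A)$ with $\sum Q_i = 0$ in general position, pick a section $s_i \in H^0(X_M, \mu_M^*\cL_c \otimes f_M^*Q_i)$ for each $i$, and take products $s_1 \otimes \cdots \otimes s_k \in H^0(X_M, (\mu_M^*\cL_c)^{\otimes k})$; a Segre-type argument shows that as the $s_i$ vary the resulting map has image of the same dimension $\dim \varphi_\cL(X)$, giving $\kappa(\mu_M^*\cL_c) \geq \dim \varphi_\cL(X)$. Combined with the general bound $\kappa \leq \mathrm{Nm}$, equality holds, and descending through the \'etale cover $\mu_M$ finishes the argument.

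The main obstacle is the equality $\kappa(\cL_c) = \mathrm{Nm}(\cL_c)$, since Iitaka dimension is not a numerical invariant whereas everything that cgg and Debarre's theorem directly control lives up to $\Pic^0$-twists. The twist-multiplication trick with torsion line bundles summing to zero is the natural remedy, and it is exactly the mechanism by which global generation of all $\Pic^0$-twists forces the untwisted line bundle to have the same Iitaka dimension as any of its twists. Everything else is a careful bookkeeping between $X$, $X_M$, and the fact that the eventual map is insensitive to both \'etale covers and the particular generic $Q$ used to define it.
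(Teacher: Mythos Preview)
Your argument is correct and matches what the paper intends; the paper states this corollary immediately after Lemma~\ref{eventual} without proof, so there is nothing to compare beyond the implicit reasoning. One simplification: the torsion-line-bundle trick in your last step is unnecessary. You already noted that $\mu_M^*\cL_c\otimes f_M^*Q$ is globally generated for \emph{every} $Q\in\Pic^0(A)$; taking $Q=\cO_A$ shows that $\mu_M^*\cL_c$ itself is globally generated (this is exactly the observation the paper records just before defining the eventual map), hence semiample, so $\kappa(\mu_M^*\cL_c)=\mathrm{Nm}(\mu_M^*\cL_c)$ is automatic. Combined with $\mathrm{Nm}(\mu_M^*\cL_c)=\mathrm{Nm}(\mu_M^*\cL_c\otimes f_M^*Q)=\dim\varphi_\cL(X)$ (the last equality via Lemma~\ref{eventual} for general $Q$), you are done without the Segre construction.
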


 The eventual maps of varieties with maximal Albanese dimension have been studied in \cite{BPS3} and \cite{J}. It turns out that the eventual paracanonical map of a variety of maximal Albanese dimension and of general type is often birational, except when the variety has some special irregular fibration structure. On the other hand, it seems difficult to say something general about the structures of the eventual paracanonical maps for varieties of higer dimensional Albanese fibers.

 \begin{lemm}
 Let $S$ be a surface of general type, of Albanese fiber dimension $1$. Let $C$ be a connected component of a general fiber of the Albanese morphsim. Then the eventual paracanonical map of $S$ is generically finite if $g(C)\geq 5$.
 \end{lemm}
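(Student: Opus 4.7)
The plan is to argue by contradiction, assuming $\varphi_{K_S}$ is not generically finite.

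Since $S$ has Albanese fibre dimension one, the Albanese morphism factors as $a_S\colon S\xrightarrow{f}B'\hookrightarrow A_S$, where $B'$ is a smooth curve of genus $q(S)$ and the general fibre of $f$ is $C$ of genus $g\ge 5$. By construction $\varphi_{K_S}$ is a relative map over $A_S$ and therefore factors through $a_S$; since its image surjects onto $B'$, the hypothesis $\dim\Im(\varphi_{K_S})<2$ forces $\dim\Im(\varphi_{K_S})=1$ and the map to be birationally equivalent to $a_S$. In particular, $\varphi_{K_S}|_C$ is constant for a general fibre $C$.

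By Lemma~\ref{eventual}, passing to an abelian \'etale cover $\mu_M\colon S_M\to S$ of sufficiently high degree, the constancy of $\varphi_{K_S}|_C$ translates to the statement that, for $Q\in\Pic^0(A_S)$ very general, the image of the restriction map
$$
H^0\bigl(S_M,\mu_M^*K_S\otimes f_M^*Q\bigr)\longrightarrow H^0(C,\omega_C)
$$
has dimension at most $1$. By Lemma~\ref{etale} this dimension coincides with the generic rank $r$ of $(a_{S*}\omega_S)_c$ on $B'$, hence $r\le 1$. Applying the decomposition theorem (Remark~\ref{decom}) to $f_*\omega_S$ on $B'$ (after a further \'etale base change), this forces the complement of the cgg part in $f_*\omega_S$ to have rank at least $g-1\ge 4$ and to consist of summands pulled back from proper quotients of $A_S$ twisted by torsion line bundles.

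Finally I aim to derive a numerical contradiction. Each non-cgg summand of $f_*\omega_S$ restricts at a general fibre to a line in $H^0(C,\omega_C)$, and the resulting $g-r\ge g-1$ lines span a distinguished subspace of $H^0(C,\omega_C)$ of Hodge-theoretic origin (coming from fixed directions in the variation of Hodge structure of $f$). Combining this with the general-type assumption on $S$, Xiao's slope inequality applied to $f$, and Clifford's inequality applied to the induced sub-pencils of $|K_C|$, one should obtain a numerical bound that forces $g\le 4$, contradicting $g\ge 5$. The main obstacle is precisely this final step: the rigorous translation of the decomposition data into a Clifford-type inequality sharp enough to rule out $g\ge 5$, which is the threshold at which the competing inequalities become incompatible.
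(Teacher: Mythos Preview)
Your proposal has a real gap, which you yourself flag: the closing numerical contradiction is not carried out, and the route you suggest (Clifford on sub-pencils of $|K_C|$ combined with Xiao) is not the one that works. The paper proceeds by a case split on $q(S)$ that you do not make. When $q(S)\ge 2$ the image curve $B'=a_S(S)$ has genus $\ge 2$, so $a_{S*}\omega_S$ is M-regular on $A_S$; then $(a_{S*}\omega_S)_c=a_{S*}\omega_S$ has full rank $g(C)$, the eventual map restricted to a general fibre is the canonical map of $C$, and one is done with no inequalities at all. Your decomposition step, which is supposed to produce many non-cgg summands, is simply vacuous in this case.

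In the remaining case $q(S)=1$, so $A_S$ is an elliptic curve, the decisive observation you are missing is that the degree of the M-regular part can be computed exactly. The non-M-regular summands in the decomposition of $a_{S*}\omega_S$ are numerically trivial line bundles on the elliptic curve, and $R^1a_{S*}\omega_S=\mathcal O_{A_S}$, so Leray gives $d:=\deg\mathcal L=\chi(a_{S*}\omega_S)-\chi(R^1a_{S*}\omega_S)=\chi(\omega_S)>0$, where $\mathcal L$ is the rank-one M-regular summand. Now Xiao's Lemma~2 (applied to the sub-line bundle $\mathcal L\subset f_*\omega_S$) on a minimal model gives $(2g(C)-2)d< K_S^2$, while Bogomolov--Miyaoka--Yau gives $K_S^2\le 9\chi(\omega_S)=9d$; dividing by $d$ forces $g(C)\le 5$. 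The contradiction therefore comes from BMY on the surface $S$, not from any Clifford-type bound on the fibre $C$.
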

 \begin{proof}
 Let $a_S: S\rightarrow A_S$ be the Albanese morphism of $S$. If $q(S)=\dim A_S\geq 2$, then $a_S(S)$ is a smooth curve of genus equal to $q(S)$ and hence $a_{S*}\omega_S$ is M-regular. The eventual paracanonical map is then generically finite since the canonical map of $C$ is generically finite. We just need to deal with the case that $A_S$ is an elliptic curve. In this case, if $\varphi_{K_S}$  is not generically finite, then  by Remark \ref{decom}, we know that the M-regular part of $a_{S*}\omega_S$ is a line bundle $\cL$ or $0$. On the other hand, $d:=\chi(S, \omega_S)=\chi(A_S, a_{S*}\omega_S)-\chi(A_S, R^1a_{S_*}\omega_S)>0$. Since $A_S$ is an elliptic curve, $R^1a_{S_*}\omega_S=\cO_{A_S}.$ Hence the M-regular part of $a_{S*}\omega_S$ is a line bundle on $A_S$ of degree $d$. We may assume that $S$ is  minimal, and by Xiao's inequality \cite[Lemma 2]{X}, we know that $(2g(C)-2)d< K_{S}^2\leq 9\chi(\omega_S)=9d $. Hence $g(C)\leq 5$.
 \end{proof}
 \begin{qu}
Pignatelli constructed in \cite{P} a minimal surface $S$ with $K_S^2=4$ and $p_g=q=1$ such that $a_{S*}\omega_S$ is the direct sum of an ample line bundle and a torsion line bundle. Hence the genus of a general fiber of the Albanese morphism is $2$. We do not know any examples where the genus of a general fiber of the Albanese morphism equal to $3$ or $4$.
Is it possible to classify those surfaces with $\varphi_{K_S}$ not generically finite ?
  \end{qu}
\subsection{Volume functions and cohomological rank functions}
In this article, we follow the idea in \cite{BPS} to compare the volume functions and the cohomological rank functions which are defined naturally on irregular varieties.

Let $f: X\rightarrow A$ be a primitive morphism from a smooth projective variety to an abelian variety. Fix a very ample polarization $H$ on $A$ and denote by $\underline{h}$ its class.
Let $\cL$ be a big line bundle on $X$. Let $F(t):=\vol_X(\cL+t f^*H)$ and $G(t)=h^0_{f_*\cL}(A, t\underline{h})$.  The volume functions have been studied in \cite{BFJ, ELMNP}.  The definition and  some basic properties of the cohomological rank functions on abelian varieties can be found in \cite{BPS, JP}.

\begin{theo}\label{BFJ}
Let $X$ be a smooth projective variety. Then the volume function $\vol: N^1(X)\rightarrow \mathbb R$ is of class $\mathcal C^1$. Moreover, let $L$ be a big line bundle on $X$, let $H$ be base point free with $D\in |H|$ a general member, then $\frac{d}{dt}\mid_{t=t_0}\vol(L+t_0H)=(\dim X)\vol_{X|D}(L+t_0H)$, where $\vol_{X|D}$ is the restricted volume function.
\end{theo}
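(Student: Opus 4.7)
The plan is to establish the derivative formula first and then deduce the $\mathcal{C}^1$-regularity on the big cone from this together with continuity, homogeneity, and the known log-concavity of $\vol^{1/n}$ on the big cone. I would proceed by passing to the asymptotic description of both sides: for a big line bundle $M$ on $X$ one has
\begin{equation*}
\vol(M)=\lim_{k\to\infty}\frac{n!}{k^n}h^0(X,kM),\qquad
\vol_{X|D}(M)=\lim_{k\to\infty}\frac{n!}{k^{n-1}}h^0(X|D,kM),
\end{equation*}
where $n=\dim X$ and $h^0(X|D,kM)$ denotes the rank of the natural restriction map $H^0(X,kM)\to H^0(D,kM|_D)$. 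Both limits (as opposed to $\limsup$) and the continuity of $\vol_{X|D}$ on the interior of the cone where it is well defined are nontrivial inputs; I would cite them from the work of Fujita, Lazarsfeld, and ELMNP.

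The geometric heart of the argument is the short exact sequence
\begin{equation*}
0\to\cO_X(kL+(j-1)H)\to\cO_X(kL+jH)\to\cO_D((kL+jH)|_D)\to 0
\end{equation*}
(using $\cO_X(D)=H$ with $D\in|H|$ a general smooth member, so that $D$ does not affect the base loci in the asymptotic limit). Taking $H^0$ and using the definition of $h^0(X|D,-)$ gives the exact telescoping identity
\begin{equation*}
h^0(X,kL+jH)=h^0(X,kL+(j-1)H)+h^0(X|D,kL+jH).
\end{equation*}
Summing over $j=1,\dots,m$ and choosing $m=\lfloor tk\rfloor$, this rewrites as a discrete Riemann sum:
\begin{equation*}
\frac{n!}{k^n}h^0(X,k(L+tH))-\frac{n!}{k^n}h^0(X,kL)=\frac{1}{k}\sum_{j=1}^{m}\frac{n!}{k^{n-1}}h^0\bigl(X|D,\,k(L+\tfrac{j}{k}H)\bigr).
\end{equation*}

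Passing to the limit $k\to\infty$, the left-hand side tends to $\vol(L+tH)-\vol(L)$, while the right-hand side converges, by the continuity of $\vol_{X|D}$ and uniform control on the summands, to $n\int_0^t\vol_{X|D}(L+sH)\,ds$. Differentiating in $t$ at $t_0$ yields the claimed identity
\begin{equation*}
\tfrac{d}{dt}\big|_{t=t_0}\vol(L+tH)=n\cdot\vol_{X|D}(L+t_0H).
\end{equation*}
The $\mathcal{C}^1$-property on all of $N^1(X)_{\R}$ then follows: on the big cone the right-hand side varies continuously in $L$ and $t$, giving continuous directional derivatives along base-point-free directions, and a density/perturbation argument (write any class as a limit of big integral classes and any direction as a difference of base-point-free ones, using Bertini) extends this to arbitrary directions. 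Outside the big cone, $\vol\equiv 0$ and differentiability is automatic.

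The main obstacle is the right-hand-side passage to the limit, which requires two nontrivial facts: first, that $\vol_{X|D}$ is genuinely a limit and is continuous on the open locus where $D$ is not contained in the stable base locus (this is the ELMNP theorem on restricted volumes); second, the uniform upper bound on $\frac{n!}{k^{n-1}}h^0(X|D,kM)$ as $M$ varies in a bounded region, needed to control the Riemann sum uniformly in $k$. Once these are granted the argument is essentially formal, but they are precisely where the deep Fujita-type approximation for restricted linear series enters.
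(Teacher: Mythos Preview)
The paper does not actually prove this theorem: it is stated as a known result, attributed via the label \texttt{BFJ} to Boucksom--Favre--Jonsson and (for the restricted volume facts) to ELMNP, and is used as a black box in the subsequent arguments. Your proposal is a reasonable outline of the standard proof found in those references---the telescoping short exact sequence giving the integral formula $\vol(L+tH)-\vol(L)=n\int_0^t \vol_{X|D}(L+sH)\,ds$, followed by continuity of the restricted volume to differentiate---so there is nothing to compare against in this paper itself.
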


The restricted volume function $\vol_{X|D}$ is quite complicated on the pseudo-effective cone of $X$. However, since $D\in |H|$ general, we know that if $M$ is big and nef, then $\vol_{X|D}(M)=\vol_D(M)=(M^{\dim D}\cdot D)_X$  (\cite[Corollary 2.17]{ELMNP}).

It follows from \cite{JP} that $G(t)$ is always continuous  but is not necessarily of class $\mathcal C^1$ in general. The differentiability of $G(t)$ is a bit complicated. At each rational point, the left derivative and right derivative of $g(t)$ exist.
Assume $D$ is an effective divisor of a smooth projective variety $X$. Let $\cF$ be a coherent sheaf on $X$. We denote by $H^0(X|D, \cF)$ the image of the restriction map $H^0(X, \cF)\rightarrow H^0(D, \cF\otimes\cO_D)$ and $h^0(X|D, \cF)=\dim H^0(X|D, \cF)$.
\begin{prop}\label{derivatives} Let  $x_0\in\mathbb Q$. Let $M\in \mathbb Z$ be sufficiently big and divisible such that $M^2x_0\in \mathbb Z$. Let $\pi_M: A\rightarrow A$ be the isogeny induced by the multiplication by $M$ and consider the Cartesian diagram
\begin{eqnarray*}
\xymatrix{
X_M\ar[r]^{\mu_M} \ar[d]^{f_M} & X\ar[d]^f\\
A\ar[r]^{\pi_M} & A.}
\end{eqnarray*}
Since $f$ is primitive, $X_M$ is connected and both $\pi_M$ and $\mu_M$ are \'etale morphisms of degree $M^{2g}$.
Let   $D_M$ be a very general section of $f_M^*H^M$ and let $Q\in\Pic^0(A)$ be very general. Then the right derivative at $x_0$ of the function $G(t)$ is equal to
\[\mathrm{lim}_{M\rightarrow\infty}\frac{1}{M^{2g-1}}h^0(X_M|D_M, \mu_{M}^*(\cL\otimes H^{x_0})\otimes f_M^*(H^M\otimes Q))\]
and the left derivative is equal to
\[\mathrm{lim}_{M\rightarrow\infty}\frac{1}{M^{2g-1}}h^0(X_M|D_M, \mu_{M}^*(\cL\otimes H^{x_0})\otimes f_M^*Q),\]

\end{prop}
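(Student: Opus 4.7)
The plan is to express $M\bigl[G(x_0 + \tfrac{1}{M}) - G(x_0)\bigr]$ (respectively $M\bigl[G(x_0) - G(x_0 - \tfrac{1}{M})\bigr]$) as exactly the normalized restricted $h^0$ appearing on the right-hand side of the proposition, and then to pass to the limit $M\to\infty$ using the existence of one-sided derivatives of $G$ at $x_0$ recorded just before the statement.

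Concretely, for $M$ sufficiently divisible so that $M^2x_0\in\mathbb{Z}$, unwinding the definition of the cohomological rank function gives
$$M^{2g}G(x_0) = h^0\bigl(A,\pi_M^*f_*\cL\otimes H^{M^2 x_0}\otimes Q\bigr),\quad M^{2g}G\bigl(x_0+\tfrac{1}{M}\bigr) = h^0\bigl(A,\pi_M^*f_*\cL\otimes H^{M^2 x_0+M}\otimes Q\bigr),$$
for $Q\in\Pic^0(A)$ very general. Étale flat base change yields $f_{M*}\mu_M^*\cL = \pi_M^*f_*\cL$, so by the projection formula both quantities equal the corresponding $h^0$ on $X_M$ of $\mu_M^*\cL$ twisted by the pullback from $A$. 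Now pick a very general divisor $D_M\in|f_M^*H^M|$ and tensor $0\to\cO_{X_M}(-D_M)\to\cO_{X_M}\to\cO_{D_M}\to 0$ with $\mu_M^*\cL\otimes f_M^*(H^{M^2 x_0+M}\otimes Q)$. The associated long exact sequence in cohomology immediately gives
$$M^{2g}\bigl[G(x_0+\tfrac{1}{M}) - G(x_0)\bigr] \;=\; h^0\bigl(X_M\mid D_M,\ \mu_M^*\cL\otimes f_M^*(H^{M^2 x_0+M}\otimes Q)\bigr),$$
since the difference of the $h^0$'s of the two non-restricted terms is by definition the image of the restriction map. Rewriting $f_M^*H^{M^2 x_0+M}=\mu_M^*f^*H^{x_0}\otimes f_M^*H^M$ in the $\mathbb{Q}$-twisted sense (using $\pi_M^*H^{x_0}\equiv H^{M^2 x_0}$ modulo $\Pic^0(A)$, which is absorbed into $Q$) and dividing by $M^{2g-1}$ reproduces exactly the right-hand side of the formula for the right derivative.

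The left derivative is handled symmetrically: one uses the short exact sequence whose middle term is $\mu_M^*\cL\otimes f_M^*(H^{M^2 x_0}\otimes Q)$ and whose kernel is $\mu_M^*\cL\otimes f_M^*(H^{M^2 x_0-M}\otimes Q)$, which gives $M^{2g}\bigl[G(x_0) - G(x_0-\tfrac{1}{M})\bigr] = h^0\bigl(X_M\mid D_M,\,\mu_M^*(\cL\otimes H^{x_0})\otimes f_M^*Q\bigr)$, the absence of the $H^M$ factor on the restriction reflecting the different position of $D_M$ in the sequence. The main obstacle is the convergence of the difference quotients: that $M\bigl[G(x_0+\tfrac{1}{M})-G(x_0)\bigr]\to G'_+(x_0)$ along the arithmetic progression of those $M$ with $M^2 x_0\in\mathbb{Z}$ is not automatic from mere continuity, but follows from the stronger local regularity of cohomological rank functions at rational points proved in \cite{JP}, which guarantees that both one-sided derivatives exist. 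A secondary point is choosing $Q$ very general in a way compatible with every $M$ in the progression, which is handled by taking $Q$ outside a countable union of thin subvarieties of $\Pic^0(A)$; a very general choice of $D_M$ for each $M$ is similarly unproblematic.
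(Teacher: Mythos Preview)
Your argument is correct. The paper does not actually supply a proof of this proposition; it is stated as a consequence of the general theory of cohomological rank functions developed in \cite{JP}, where the existence of one-sided derivatives at rational points is established. Your write-up fills in precisely the computation one extracts from that reference: realizing the difference $M^{2g}\bigl[G(x_0+\tfrac{1}{M})-G(x_0)\bigr]$ as the restricted $h^0$ via the tautological short exact sequence on $X_M$, and then invoking the existence of the one-sided derivative to pass to the limit. The handling of the $\mathbb Q$-twist (absorbing the discrepancy between $\pi_M^*H^{x_0}$ and $H^{M^2x_0}$ into the very general $Q$) and the symmetric treatment of the left derivative are both accurate.

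One minor remark: for the equality $h^0(\text{middle})-h^0(\text{kernel})=h^0(X_M\mid D_M,\,\cdot\,)$ you do not actually need $D_M$ very general---any $D_M\in|f_M^*H^M|$ gives the same value, since the left-hand side depends only on the linear equivalence class. The very general hypothesis in the statement is there for downstream applications (smoothness of $D_M$, etc.), not for this identity.
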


Both functions are indeed not easy to compute.
Assume that $t_0=\frac{a}{b}\in \mathbb Q$, we may replace $L+t_0f^*H$ by $(L+t_0f^*H)_c$ to compute its $0$-th cohomological rank function. Note that $(L+t_0f^*H)_c$ is well-defined upto abelian \'etale covers and birational modifications, hence $\vol((L+t_0f^*H)_c):=\frac{1}{b^{2g}}\vol((\mu_b^*L+abf_b^*H)_c)$ is well-defined and
\begin{eqnarray} F'(t_0)&=&\nonumber (\dim X)\vol_{X\mid D}(\cL+t_0f^*H)\\\nonumber&\geq &  (\dim X)\vol_{X\mid D}((\cL+t_0f^*H)_c))\\&=&\nonumber (\dim X)\vol(D, (\cL+t_0f^*H)_c\mid_{D}),
\end{eqnarray}
where $D\in |f^*H|$ general and the last equality holds because $(\cL+t_0f^*H)_c$ is nef. Moreover,
for any $M\in\mathbb Z$ divisible by $b$ and $D_M\in |f_M^*H^M|$ general, we have
\begin{eqnarray}
\vol(D, (\cL+t_0f^*H)_c\mid_{D})&=&\nonumber ((\cL+t_0f^*H)_c)^{\dim X-1}\cdot D)_X\\
&=&\nonumber \frac{1}{M^{2g}}((\mu_M^*\cL+M^2t_0f_M^*H)_c^{\dim X-1}\cdot \mu_M^*D)_{X_M}\\&=& \nonumber\frac{1}{M^{2g-1}}((\mu_M^*\cL+M^2t_0f_M^*H)_c^{\dim X-1}\cdot D_M)_{X_M}\\\nonumber
&=&\frac{1}{M^{2g-1}}\vol(D_M, (\mu_M^*\cL+M^2t_0f_M^*H)_c\mid_{D_M}).
\end{eqnarray}
  Put the above inequalities together, we have
  \begin{eqnarray}\label{compare1}
  F'(t_0)\geq \frac{\dim X}{M^{2g-1}}\vol(D_M, (\mu_M^*\cL+M^2t_0f_M^*H)_c\mid_{D_M}).
  \end{eqnarray}
On the other hand,
\begin{eqnarray}\label{compare2}
D^-G(t_0)&=&\mathrm{lim}_{M\rightarrow\infty} \frac{1}{M^{2g-1}}h^0(X_M|D_M, \mu_{M}^*(\cL\otimes f^*H^{t_0})\otimes f_M^*Q)\\&=&\nonumber \mathrm{lim}_{M\rightarrow\infty} \frac{1}{M^{2g-1}}h^0(X_M|D_M, \big(\mu_{M}^*(\cL\otimes f^*H^{t_0})_c\otimes f_M^*Q\big)\mid_{D_M})\\\nonumber &\leq &\mathrm{lim}_{M\rightarrow\infty} \frac{1}{M^{2g-1}}h^0(D_M,  \big(\mu_{M}^*(\cL\otimes f^*H^{t_0})_c\otimes f_M^*Q\big)\mid_{D_M}).
\end{eqnarray}
for $Q\in\Pic^0(A)$ very general.

By (\ref{compare1}) and (\ref{compare2}), we can argue by induction on the dimension to compare the volume function $\vol(X, \cL+tf^*H)$ and the cohomological rank function $h^0_{f_*\cL}(A, t\underline{h})$.

\subsection{Clifford type inequalities}
Clifford's lemma on curve is probably the first result to compare the degrees and the dimensions of global sections of mobile divisors on smooth projective varieties. The following lemma is an application of Clifford's lemma, which is probably  known to experts.
\begin{lemm}\label{curve}
Assume that $C$ is a smooth projective curve and let $D$ be a divisor on $C$ such that $h^0(D)\geq 2$.
\begin{itemize}
\item[(1)] If $g(C)=0$, then $\frac{\deg D}{h^0(D)}\geq \frac{1}{2}$ and if $g(C)=1$, $\frac{\deg D}{h^0(D)}=1$;
\item[(2)] if $g(C)\geq 2$, then $\frac{\deg D}{h^0(D)}\geq 1$ and equality holds only when $C$ is hyperelliptic;
\item[(3)] if $C$ is neither hyperelliptic nor trigonal  and $\deg D\leq 2g(C)-2$, then
$\frac{\deg D}{h^0(D)}\geq 2-\frac{2}{g(C)}$.
\end{itemize}
\end{lemm}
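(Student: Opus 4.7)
The plan is to treat the three parts separately, each by a different ingredient from the theory of linear series on curves.

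For part (1), I would compute directly. When $g(C)=0$, $C\cong\mathbb{P}^1$ and $h^0(D)=\deg D+1$; the hypothesis $h^0(D)\geq 2$ forces $\deg D\geq 1$, so $\frac{\deg D}{h^0(D)}=\frac{\deg D}{\deg D+1}\geq\frac{1}{2}$. When $g(C)=1$, $K_C\sim 0$, so Serre duality gives $h^1(D)=h^0(-D)=0$ as soon as $\deg D\geq 1$; Riemann--Roch then yields $h^0(D)=\deg D$, giving the ratio $1$.

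For part (2), I would invoke Clifford's theorem. In the range $0\leq\deg D\leq 2g-2$, it provides $h^0(D)\leq\frac{1}{2}\deg D+1$, so $\deg D\geq 2h^0(D)-2\geq h^0(D)$ whenever $h^0(D)\geq 2$. Outside that range, $h^1(D)=0$ forces $\deg D-h^0(D)=g-1\geq 1$. An equality $\deg D=h^0(D)$ then pins us to $h^0(D)=2$ and $\deg D=2$, i.e.\ a $g^1_2$ on $C$, so $C$ is hyperelliptic.

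For part (3), I would split into two cases according to $h^1(D)$. If $h^1(D)\leq 1$, Riemann--Roch alone gives $h^0(D)\leq\deg D-g+2$, and substituting this into the desired inequality $g\deg D\geq(2g-2)h^0(D)$ reduces it to $(g-2)\bigl((2g-2)-\deg D\bigr)\geq 0$, which is exactly our hypothesis (and trivial for $g=2$). If instead $h^1(D)\geq 2$, then both $|D|$ and $|K-D|$ carry at least two sections, so $D$ computes a value of the Clifford index. The assumption that $C$ is neither hyperelliptic nor trigonal gives $\mathrm{Cliff}(C)\geq 2$, hence $\deg D\geq 2h^0(D)$ and therefore $\frac{\deg D}{h^0(D)}\geq 2>2-\frac{2}{g}$.

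The main obstacle in part (3) is the exceptional case of a smooth plane quintic, which is non-hyperelliptic and non-trigonal but has $\mathrm{Cliff}(C)=1$, and where the bound $\frac{5}{3}$ is sharp (realized by the hyperplane class). I would handle this case by direct inspection, using $g=6$, gonality equal to $4$, and the non-existence of a $g^2_d$ with $d<5$ (which would otherwise contradict the minimality of the plane model) to check the inequality for all divisors of degree at most $10$ with $h^0\geq 2$.
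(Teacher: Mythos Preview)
Your argument is correct. Parts (1) and (2) match the paper exactly. For part (3) your route is genuinely different from the paper's and, in fact, cleaner.

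The paper argues part (3) by first stripping base points from $|D|$, then splitting into $\deg D\leq g-1$ versus $\deg D\geq g$, and within each range further splitting by the degree of the map $\varphi_D$. In the low-degree range it invokes a lemma of Beauville bounding $h^0(D)$ according to whether $\varphi_D$ is birational, a double cover, or of degree $\geq 3$; in the high-degree range it treats the birational subcase via the refined Clifford inequality from \cite{ACGH}. This is self-contained but involves many subcases and several ad hoc numerical checks.

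Your approach bypasses all of this by going straight to the Clifford index: either $h^1(D)\leq 1$ and Riemann--Roch alone suffices, or $h^1(D)\geq 2$ and $D$ contributes to $\mathrm{Cliff}(C)$. The cost is that you must import the classification of curves with $\mathrm{Cliff}(C)\leq 1$ (hyperelliptic, trigonal, or smooth plane quintic), and then dispatch the plane quintic by hand. That last step is routine: once you know $\mathrm{Cliff}(C)=1$ you get $\deg D\geq 2h^0(D)-1$, which already gives $\frac{\deg D}{h^0(D)}\geq 2-\frac{1}{h^0(D)}\geq\frac{5}{3}$ for $h^0(D)\geq 3$, and the remaining case $h^0(D)=2$ is handled by the gonality being $4$. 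So your ``direct inspection'' is in fact a two-line argument, not a long case check.

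In short: the paper trades a black-box (the Clifford-index-one classification) for a longer but more elementary analysis; you make the opposite trade. Both are valid, and yours is shorter.
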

\begin{proof}
Note that $(1)$ and $(2)$ follow directly from Riemann-Roch or Clifford's lemma. We just need to deal with $(3)$.

Since $h^0(D)\geq 2$, we may and will assume that the linear system $|D|$ is base point free. Let $\varphi_D$  be the morphism induced by $|D|$. Since $C$ is neither hyperelliptic nor trigonal, $g(C)\geq 5$.

If $\deg D\leq g(C)-1$,    Beauville showed in \cite[Lemma 5.1]{B} that  if $\varphi_D$ is birational, then $\frac{1}{3}(\deg D+4)\geq h^0(D)$; if $\deg \varphi_D=2$,  there exists a double cover $\pi: C\rightarrow C'$ such that $D=\pi^*D'+V$ and $\frac{1}{2}\deg D+1-g(C')\geq h^0(D)$; if $\deg \varphi_D\geq 3$, then $\deg D\geq \deg \varphi_D(h^0(D)-1)$.
In the first case, as $\varphi_D$ is birational, we have $h^0(D)\geq 3$. Hence $\frac{\deg D}{h^0(D)}\geq 2-\frac{2}{g(C)}$, unless possibly $h^0(D)=3$ and $g\geq 7$. If $\frac{\deg D}{h^0(D)}< 2-\frac{2}{g(C)}$, we  have $\deg D<6-\frac{6}{g}$. Hence $\deg D\leq 5$. But  if $\deg D\leq 5$, then the arithmetic genus of $\varphi_D(C)$ in $\mathbb P^2$ is $\leq 6$ and hence the geometric genus of $C$ is $\leq 6$ which is a contradiction.
In the second case, we have $\deg D\geq 2h^0(D)$, since $C$ is not hyperelliptic.
In the last case, we always have $\deg D\geq (2-\frac{2}{g})h^0(D)$, unless $\deg\varphi_D=3$ and $h^0(D)=2$ and $g\geq 5$, which implies that $C$ is trigonal.

We then assume that $\deg D\geq g(C)$. We may assume that $h^0(D)\geq \frac{1}{2}\deg D\geq \frac{1}{2}g$, otherwise nothing needs to be proved. If $\deg \varphi_D\geq 3$, then we still have $\deg D\geq 3(h^0(D)-1)$ and hence $\frac{\deg D}{h^0(D)}\geq 3-\frac{3}{h^0(D)}\geq2-\frac{2}{g(C)}$, since $g(C)\geq 5$. If $\deg \varphi_D=2$, then $\deg D\geq 2\deg \varphi_D(C)$. Since $C$ is not hyperelliptic, $\varphi_D(C)$ is not rational. Hence $\deg \varphi_D(C)\geq h^0(D)$ and $\deg D\geq 2h^0(D)$. We then assume that $\varphi_D$ is birational. If $h^1(D)=h^0(K_C-D)=0$ or $1$, then $$\frac{\deg D}{h^0(D)}=\frac{\deg D}{1+\deg D-g(C)+h^1(D)}\geq \frac{\deg D}{2+\deg D-g(C)}\geq 2-\frac{2}{g(C)}.$$ If $h^1(D)\geq 2$,
consider $|K_C-D|=|D'|+V$, where $V$ is the fixed divisor of $|K_C-D|$. We then apply refined Clifford's lemma \cite[P137 B1]{ACGH}, $$g-1=|K_C|\geq |D|+|D'|\geq (h^0(D)-1)+(h^1(D)-1)+1.$$ Combining this inequality with Riemann-Roch, we have $\deg D\geq 2h^0(D)-1$ and hence $\frac{\deg D}{h^0(D)}\geq 2-\frac{2}{g(C)}$.
\end{proof}

In higher dimensions, we have less precise versions of Clifford type results.
\begin{lemm}\label{general-clifford}
Let $F$ be a smooth projective variety of dimension $\geq 2$. Let $D$ be a base point free divisor on $F$ whose complete linear system $|D|$ induces a generically finite morphism $\varphi_D$ of $F$. Then
\begin{itemize}
\item[(1)] $\frac{\vol(D)}{h^0(D)}\geq \frac{1}{\dim F+1}$;
\item[(2)] If $F$ is not uniruled and $\varphi_D$ is birational, then $$\vol(D)\geq (\dim F)(h^0(D)-\dim F-1)+1;$$
\item[(3)] If $F$ is not uniruled, then $\frac{\vol(D)}{h^0(D)}\geq \frac{2}{\dim F+1}$ and the equality holds if and only if $\varphi_D: F\rightarrow \mathbb P(H^0(D))$ is surjective and is of degree $2$.
\end{itemize}
\end{lemm}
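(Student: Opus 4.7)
The plan is to address (1), (2), and (3) in that order, since (3) reduces formally to (1) together with (2), while (2) is the crux.

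For (1), the approach is a direct projective-geometric bound: since $\varphi_D$ is generically finite, the image $Z := \varphi_D(F) \subset \mathbb{P}(H^0(F,D)) \simeq \mathbb{P}^{h^0(D)-1}$ is a non-degenerate irreducible subvariety of dimension $n = \dim F$, so the classical inequality $\deg Z \geq h^0(D) - n$ applies. Combined with $\vol(D) = \deg \varphi_D \cdot \deg Z \geq \deg Z$ and $h^0(D) \geq n + 1$ (forced by generic finiteness), we obtain $\vol(D)/h^0(D) \geq 1/(n+1)$. For (3), I would split into cases on $\deg \varphi_D$. When $\deg \varphi_D \geq 2$, the estimate $\vol(D) \geq 2 \deg Z \geq 2(h^0(D) - n)$ yields $\vol(D)/h^0(D) \geq 2/(n+1)$ as soon as $h^0(D) \geq n+1$, with equality forcing $h^0(D) = n+1$, $\deg Z = 1$ (so $Z = \mathbb{P}^n$), and $\deg \varphi_D = 2$---matching exactly the stated equality case. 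When $\deg \varphi_D = 1$, the variety $Z$ is birational to the non-uniruled $F$, so $Z \neq \mathbb{P}^n$ and hence $h^0(D) \geq n + 2$; an elementary algebraic comparison shows that the bound $n(h^0(D) - n - 1) + 1$ from (2) then strictly exceeds $2 h^0(D)/(n+1)$ throughout this range.

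For (2), I would induct on $n$, with the base case $n = 1$ supplied by Lemma \ref{curve} applied to a non-rational (genus $\geq 1$) curve. For the inductive step, let $D_1 \in |D|$ be a general smooth member, which exists by Bertini since $|D|$ is base-point free. Four ingredients drive the induction: (a) $h^0(D_1, D|_{D_1}) \geq h^0(F, D) - 1$ from the restriction sequence; (b) $\vol_{D_1}(D|_{D_1}) = D^n = \vol(D)$ since $D|_{D_1}$ is big and nef; (c) $\varphi_{D|_{D_1}}$ remains birational onto its image for general $D_1$; and (d) $D_1$ is of general type (hence non-uniruled), because by adjunction $K_{D_1} = (K_F + D)|_{D_1}$ and $K_F + D$ is big---pseudo-effective plus big---so by Kodaira's lemma one may write $K_F + D = A + E$ with $A$ ample and $E$ effective, and a general $D_1$ avoids $\Supp(E)$, making $K_{D_1} = A|_{D_1} + E|_{D_1}$ big.

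The main obstacle is that a one-step restriction plus induction only yields the weaker slope-$(n-1)$ bound $\vol(D) \geq (n-1)(h^0(D) - n - 1) + 1$, off by an additive $(h^0(D) - n - 1)$ from the desired statement. To recover the missing positivity, I would iterate the restriction to a general complete intersection curve $C = D_1 \cap \cdots \cap D_{n-1}$: iterated adjunction gives $K_C = (K_F + (n-1) D)|_C$, and pseudo-effectivity of $K_F$ combined with nefness of $D$ yields $g(C) \geq 1 + (n-1) \vol(D)/2$. Feeding this genus bound into Clifford's inequality on $C$, and splitting into the special and non-special regimes of Lemma \ref{curve}, closes the bound cleanly for $n = 2$. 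For $n \geq 3$ one needs a sharper invocation of Lemma \ref{curve}, using case (3) there (non-hyperelliptic, non-trigonal) applied to $C$: since $g(C)$ grows like $(n-1)\vol(D)/2$, the complete intersection curve is sufficiently generic to avoid the exceptional low-gonality regime, and the Clifford improvement from slope $2$ to slope $2 - 2/g(C)$ then supplies the missing positivity needed to close the inductive bound.
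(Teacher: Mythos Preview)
Your treatment of (1) and (3) matches the paper's proof exactly: the non-degenerate variety bound $\deg Z\geq h^0(D)-n$ for (1), and the case split on $\deg\varphi_D$ for (3), reducing the birational case to (2).

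The gap is in (2). Your inductive scheme cannot close, and you have correctly identified the obstruction yourself: one restriction step drops the slope from $n$ to $n-1$, and iterating all the way down to the complete-intersection curve $C$ leaves you appealing to Clifford-type bounds on $C$, which give slope at most $2$. Concretely, with $d=\vol(D)$ and $h^0(C,D|_C)\geq h^0(F,D)-(n-1)$, Clifford (or Lemma~\ref{curve}(3)) yields at best $d\geq \bigl(2-\tfrac{2}{g(C)}\bigr)\bigl(h^0(F,D)-n+1\bigr)$, which for large $h^0(F,D)$ is a slope-$2$ bound. For $n\geq 3$ this is strictly weaker than the desired $d\geq n\bigl(h^0(F,D)-n-1\bigr)+1$; the refinement from $2-2/h^0$ to $2-2/g(C)$ is asymptotically negligible and cannot supply the missing factor of $n/2$. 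Your claim that $C$ is automatically neither hyperelliptic nor trigonal is also unjustified, but even granting it the bound is too weak.

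What the paper does instead is use the \emph{upper} bound on $g(C)$ coming from Castelnuovo's genus bound for a non-degenerate curve of degree $d$ in $\mathbb P^r$ (here $r=h^0(D)-n$, since $\varphi_D|_C$ is birational): writing $d-1=m(r-1)+\varepsilon$ with $m=\lfloor(d-1)/(r-1)\rfloor$, one has $g(C)\leq \tfrac{m(m-1)}{2}(r-1)+m\varepsilon$. Pairing this with your adjunction lower bound $2g(C)-2\geq (n-1)d$ (using $K_F$ pseudo-effective by \cite{BDPP}) forces $m\geq n$, i.e.\ $d-1\geq n(r-1)$, which is exactly the stated inequality. The point is that Castelnuovo is quadratic in $d$ while adjunction is linear, so comparing them extracts the dimension-dependent slope $n$; Clifford, being linear with slope $2$, cannot do this.
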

\begin{proof}
Let $\dim F=n$.  Note that $\varphi_{D}: F\rightarrow \mathbb P(H^0(D))$ is generically finite onto its image and $\varphi_D(F)$ is non-degenerate. Hence $\vol(D)=(D^n)\geq \deg_{\mathbb P(H^0(D))}(\varphi_D(F))\geq h^0(D)-n$. Thus $\frac{\vol(D)}{h^0(D)}\geq 1-\frac{n}{h^0(D)}\geq \frac{1}{n+1}$.

If $F$ is not ruled  and $\varphi_D$ is birational, we choose $H_i$, $1\leq i\leq \dim F-1$ general hyperplane section of $|D|$ and denote by $C$ its complete intersection. Then the global section of $D$ induces a birational morphism from C to a projective space $\mathbb P^{h^0(D)-n}$. We denote by $d=\vol(D)=(D\cdot C)$, $r:=h^0(D)-\dim F\geq 2$, and let $m=\lfloor \frac{d-1}{r-1}\rfloor$. By Castelnuovo's bound, $g(C)\leq \frac{m(m-1)}{2}(r-1)+m\epsilon$, where $d-1=m(r-1)+\epsilon$. On the other hand, $2g(C)-2=(K_F+(\dim F-1)D)\cdot D^{\dim F-1}\geq (\dim F-1)d$. Here we applied the main result of \cite{BDPP} which says that $F$ is not uniruled if and only if $K_F$ is pseudo-effective and hence $K_F\cdot D^{\dim F-1}\geq 0$.
Hence $m(m-1)(r-1)+2m\epsilon-2=(m-1)(d-1-\epsilon)+2m\epsilon-2\geq (\dim F-1)d$. We conclude that $m\geq \dim F$.

Assume that $F$ is not ruled and $\varphi_D$ is not birational, then $\deg \varphi_D\geq 2$  and hence $\vol(D)\geq 2(h^0(D)-n)$. If $\varphi_D$ is birational, then by $(2)$, we have $\vol(D)\geq n(h^0(D)-n-1)+1$. Combine these inequalities, we conclude that $\frac{\vol(D)}{h^0(D)}\geq \frac{2}{\dim F+1}$ and the equality holds if and only if  $\varphi_D: F\rightarrow \mathbb P(H^0(D))$ is surjective and is of degree $2$.
\end{proof}


 The following definitions appear naturally in our context.
 \begin{defi}Let $L$ be a line bundle on a smooth projective variety $F$. Assume that the global sections of $L$ define a generically finite map of $F$. For all smooth birational model $\sigma: F'\rightarrow F$, we consider all divisors $D\preceq\sigma^*L$  whose global sections define a generically finite map  of $F$ and let $\delta(L)$ to be the minimum of $\frac{\vol(F', D)}{h^0(D)}$ of all such $D$ and let $\delta_1(L)$ be the minimum of $\vol(V, D|_V)$ of all such $D$, where  $V$ is a general positive dimensional subvariety of $F'$. Here a general subvariety means a subvariety which passes through a general point of $F'$ and hence its deformation dominates $F'$.

 Moreover, we define $\delta(F)$ (resp. $\delta_1(F)$) to be the minimum of $\delta(L)$ (resp. $\delta_1(L)$) for all line bundles $L$, whose global sections define a generically finite map of $F$.
 \end{defi}

By Lemma \ref{general-clifford}, if $F$ is not uniruled, then $\delta(L)\geq \delta(F)\geq \frac{2}{\dim F+1}$ and $\delta_1(L)\geq \delta_1(F)\geq 2$.

 For convention, we will let $\delta(F)=\delta_1(F)=1$ when $\dim F=0$.

These birational invariants are related to the irrationality degree and the covering gonality (see \cite{BDELU}). We recall that the irrationality degree $\mathrm{irr}(F)$ of $F$ is defined to be
 $$\mathrm{min}\{\deg \varphi\mid \varphi: F\dashrightarrow \mathbb P^{\dim F}\mathrm{ \;is\;a\;\ dominate\;map}\}$$
and the covering gonality $\mathrm{cov.gon}(F)$ is defined to be $$\mathrm{min}\{\mathrm{gon}(C)\mid \mathrm{ an\;irreducible\; curve}\; C \subset X \;\mathrm{through\; a\; general\; point}\;x\in X
\}.$$ It is clear that $$\delta(F)\leq \frac{\mathrm{irr}(F)}{\dim F+1}$$ and $$\delta_1(F)\geq \mathrm{cov.gon}(F).$$ Indeed, we have

\begin{lemm}\label{no-interm}
 $\delta(F)\geq \min\{\frac{\mathrm{irr}(F)}{\dim F+1}, 1\}$.
\end{lemm}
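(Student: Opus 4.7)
The plan is to compare $\vol(D)$ and $h^0(D)$ for any divisor $D$ with $\varphi_D$ generically finite, via a projection bound on $\mathrm{irr}(F)$. Given $L$ with sections defining a generically finite map, fix $D \preceq \sigma^* L$ on a smooth birational model $\sigma: F' \to F$ with $|D|$ inducing a generically finite rational map $\varphi_D: F' \dashrightarrow Y \subset \mathbb P^r$, where $r+1 = h^0(D)$. Set $n = \dim F$, $e = \deg \varphi_D$, and $d = \deg Y$. After passing to a log-resolution of $|D|$ we may assume $\vol(D) = ed$, and non-degeneracy of $Y$ in $\mathbb P^r$ forces $d \geq r - n + 1$.

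The key step is to establish $\mathrm{irr}(F) \leq e(d - r + n)$. I plan to realize this by choosing $r - n$ general points $p_1, \dots, p_{r-n}$ of $Y$, taking their linear span $\Lambda = \langle p_1, \dots, p_{r-n} \rangle$ (an $(r-n-1)$-plane), and projecting from $\Lambda$. A general $(r-n)$-plane containing $\Lambda$ meets $Y$ in exactly $d$ points, at least $r - n$ of which lie in $\Lambda$, so the restriction $\pi_\Lambda|_Y \colon Y \dashrightarrow \mathbb P^n$ is dominant of degree at most $d - (r-n) = d - r + n$. Composing with $\varphi_D$ produces a dominant rational map $F \dashrightarrow \mathbb P^n$ of degree at most $e(d - r + n)$, bounding $\mathrm{irr}(F)$. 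Rearranged, this reads $\vol(D) = ed \geq \mathrm{irr}(F) + e(r - n)$.

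The rest is a short case analysis. If $\mathrm{irr}(F) \leq n + 1$, then
\[\vol(D)(n+1) - \mathrm{irr}(F)(r+1) \geq (r-n)\bigl(e(n+1) - \mathrm{irr}(F)\bigr) \geq 0,\]
so $\vol(D)/h^0(D) \geq \mathrm{irr}(F)/(n+1)$. If instead $\mathrm{irr}(F) \geq n+2$, then $\vol(D) \geq (n+2) + e(r - n) \geq r + 2 > h^0(D)$, so $\vol(D)/h^0(D) \geq 1$. Either way the ratio is at least $\min\{\mathrm{irr}(F)/(n+1), 1\}$, and taking the infimum over $D$ and $L$ yields the claim.

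The main subtlety I anticipate is justifying that $|\Lambda \cap Y|$ really equals $r - n$ and that $\pi_\Lambda|_Y$ is generically finite and dominant to $\mathbb P^n$; both hold by the uniform position principle and a standard dimension count on the relevant incidence variety when $Y$ is irreducible and non-degenerate, but degenerate configurations such as cones or varieties with large secant defect might demand a minor modification (for instance, first projecting away the vertex of a cone and arguing on its base). Any genuine extra intersection of $\Lambda$ with $Y$ only strengthens the projection bound, so such pathologies cannot threaten the conclusion.
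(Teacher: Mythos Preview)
Your proposal is correct and follows essentially the same route as the paper: both arguments project the image $Y$ (the paper's $W$) from $r-n$ general points to obtain a dominant rational map $F\dashrightarrow\mathbb P^n$ of degree $\le e(d-r+n)$, giving the key inequality $\mathrm{irr}(F)\le e(d-r+n)$, and then finish with a short case analysis. The only cosmetic difference is that the paper splits according to whether $d\le h^0(D)$ while you split according to whether $\mathrm{irr}(F)\le n+1$; the computations are equivalent.
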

\begin{proof}
After birational modifications, we may assume that a base point free divisor  $D$ on $F$ computes $\delta(F)$, namely $\delta(F)=\frac{\vol(D)}{h^0(D)}$. If $h^0(D)=\dim F+1$, then $\delta(F)=\frac{\mathrm{irr}(F)}{\dim F+1}$. If not, let $\varphi_D: F\rightarrow W\subset \mathbb P^{\dim F+s}$ be the morphism induced by the global sections of $D$, where $s\geq 1$. Let $\deg W=d_W$. Then choose $s$ general points on $W$ and take the projection from these points. We get a map of degree $d_W-s$  from $W$ to $\mathbb P^{\dim F}$. Hence $\deg\varphi_D(d_W-s)\geq \mathrm{irr}(F)$. We note that $$\delta(F)=\frac{\deg\varphi_D\cdot d_W}{\dim F+s+1}\geq \frac{d_W\mathrm{irr(F)}}{(d_W-s)(\dim F+s+1)}.$$ Thus if $d_W\leq \dim F+s+1$, we have $\delta(F)\geq \frac{\mathrm{irr}(F)}{\dim F+1}$ and if $d_W>\dim F+s+1$, then $\delta(F)>1$.
\end{proof}
 \begin{rema}\label{degrees}

 In \cite{BDELU}, the authors studied the irrationality degrees and the covering degrees of very general hypersurfaces in projective spaces. They showed that if $F$ is a smooth hypersurface of degree $d\geq n+2$ in $\mathbb P^{n+1}$, then $\mathrm{irr}(F)\geq \mathrm{cov.gon}(F)\geq d-n$ and if $F$ is a very general hypersurface in $\mathbb P^{n+1}$ of degree $d\geq 2n+1$, then $\mathrm{irr}(F)=d-1$.
 \end{rema}
  \section{Higher dimensional Severi type inequalities}
 \subsection{General line bundles}

In this section, we prove Severi type inequalities for general line bundles on an irregular variety $X$.

We consider $f: X\rightarrow A$ a primitive morphism from a smooth projective variety to an abelian variety. Let $f: X\xrightarrow{g} V\xrightarrow{h} A$ be the Stein factorization of $f$. We assume that a general fiber of $g$ is $F$, $\dim F=n$, and $\dim V=k\geq 1$.
 \begin{theo}\label{general}Under the above assumptions, let $L$ be a line bundle on $X$ such that $L_c$ is big. Then $$\vol(L)\geq \mathrm{min}\{\frac{(n+k)!}{n!}\delta(F), k!\delta_1(F)\}h^0(A, f_*L\otimes Q),$$ where $Q\in\Pic^0(A)$ general. In particular, $\vol(L)\geq k!h^0(A, f_*L\otimes Q)$ and $\vol(L)\geq 2k!h^0(A, f_*L\otimes Q)$ if $F$ is not uniruled.
 \end{theo}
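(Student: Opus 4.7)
The plan is to argue by induction on $k=\dim V$, comparing the volume function $F(t):=\vol_X(L+tf^*H)$ with the cohomological rank function $G(t):=h^0_{f_*L}(A,t\underline{h})$ via the derivative inequalities (\ref{compare1}) and (\ref{compare2}). The target of the induction is a derivative comparison of the form $F'(t_0)\geq C\cdot D^-G(t_0)$ with constant $C:=\min\{\frac{(n+k)!}{n!}\delta(F),\;k!\delta_1(F)\}$; integrating this over $[0,+\infty)$ and using matched leading asymptotics of $F$ and $G$ at $+\infty$ then yields $F(0)\geq C\cdot G(0)$, which is the claimed inequality.

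For the inductive step, fix $t_0\in\Q_{\geq 0}$ and $M$ sufficiently large and divisible. By (\ref{compare1}), $F'(t_0)$ is bounded below by $\frac{n+k}{M^{2g-1}}\vol\bigl(D_M,(\mu_M^*L+M^2t_0 f_M^*H)_c|_{D_M}\bigr)$ for a general $D_M\in|f_M^*H^M|$. The restricted morphism $f_M|_{D_M}:D_M\to A$ is primitive, with image of dimension $k-1$ and generic fiber again isomorphic to $F$ of dimension $n$, and by Lemma \ref{eventual} the eventual map of $L$ restricts compatibly on $D_M$. Applying the induction hypothesis to this restricted situation produces a lower bound in terms of a constant $\min\{\frac{(n+k-1)!}{n!}\delta(F),\;(k-1)!\delta_1(F)\}$ times $h^0$ on $D_M$. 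Combining with (\ref{compare2}) and using $\dim X=n+k$ to absorb the factor in front of $\vol$, one recovers the desired constant $C$ after letting $M\to\infty$.

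For the base case $k=1$, the image $V$ is a curve and a general $D_M\in|f_M^*H^M|$ is a finite disjoint union of fibers of $f_M$, on each of which the Clifford-type bounds of Lemma \ref{general-clifford} (with Lemma \ref{curve} for $n=1$) applied directly to the restriction of $(\mu_M^*L+M^2t_0 f_M^*H)_c$ give the constant $C$. For the integration, one verifies by a straightforward intersection-theoretic computation that the leading $t^k$-terms of $F(t)$ and $G(t)$ are in the ratio $\frac{(n+k)!}{n!}\cdot\vol(L|_F)/h^0(L|_F)$, which is at least $C$ by Lemma \ref{general-clifford}; combined with the derivative inequality on all of $[0,+\infty)$ this forces $F(0)\geq C\cdot G(0)$.

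The main technical obstacle lies in the compatibility of the construction $(\cdot)_c$ and of the eventual map with iterated restriction to general divisors in $|f_M^*H^M|$: one must ensure that $(\mu_M^*L+M^2t_0 f_M^*H)_c|_{D_M}$ is a nef line bundle with non-trivial maximal continuously globally generated part after passing to a further \'etale cover of $D_M$, so that the induction hypothesis genuinely applies at each step. A secondary difficulty is correctly tracking which term in the minimum is selected along the two possible branches of the induction: the $\frac{(n+k)!}{n!}\delta(F)$ branch arises when the eventual map remains generically finite throughout the induction, while the $k!\delta_1(F)$ branch arises when, at some intermediate stage, the eventual map contracts to a positive-dimensional subvariety of $F$ and Lemma \ref{general-clifford} has to be applied in its $\delta_1$ form rather than its $\delta$ form.
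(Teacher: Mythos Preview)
Your derivative comparison $F'(t)\ge C\,D^-G(t)$, obtained from (\ref{compare1}), (\ref{compare2}) and the induction hypothesis on $D_M$, is set up correctly, but integrating it over $[0,+\infty)$ gives an inequality in the \emph{wrong direction}. The derivative inequality says that $F-CG$ is non-decreasing, so for every $T>0$ one has $(F-CG)(0)\le(F-CG)(T)$; this is an upper bound on $(F-CG)(0)$, and no asymptotic matching at $+\infty$ can convert it into the desired lower bound $(F-CG)(0)\ge 0$. (If the leading $t^k$-terms of $F$ and $CG$ agreed exactly, your scheme would in fact yield $(F-CG)(0)\le 0$.)

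The paper integrates to the \emph{left}. One sets
\[
t_0=\inf\{t\in\Q : (L\otimes f^*H^t)_c\text{ is big}\}<0,
\]
(this is where the hypothesis that $L_c$ is big is used), applies the derivative inequality on $(t_0,0]$ to obtain
\[
\vol(L)-\vol(L+t_0f^*H)\ \ge\ (n+k)\,N(F,k-1)\bigl(G(0)-G(t_0)\bigr),
\]
and then bounds the boundary term directly: $\vol(L+t_0f^*H)\ge k!\,\delta_1(F)\,G(t_0)$. This last estimate is the geometric heart of the proof and is where $\delta_1(F)$ genuinely enters. At $t=t_0$ the eventual map of $L+t_0f^*H$ is no longer generically finite; after an \'etale cover one factors $X_M\to Z_M\to A$ with $\dim Z_M<n+k$, applies the theorem on $Z_M$ by a second induction on $n=\dim F$, and picks up the factor $\delta_1(F)$ from intersecting with the positive-dimensional fibers of $X_M\to Z_M$ inside $F$. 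Your proposal runs only a single induction on $k$, misses the correct direction of integration, and misses this boundary analysis; the role you assign to $\delta_1(F)$ (a branch of the restriction-to-$D_M$ induction where the eventual map happens to contract) is not the mechanism by which it arises.
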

 \begin{proof}
Let $F(t)=\vol(L+tf^*H)$ and $G(t)=h^0(A, f_*L\otimes H^t\otimes Q)$. Let $$t_0:=\{t\in \mathbb Q\mid (L\otimes f^*H^t)_c\;\; is\; \;big\}.$$
By assumption, we know that $t_0<0$.

 Note that when $n=0$, the above inequality comes from the usual Severi's inequality due to Barja \cite{bar} and Zhang \cite{zh1}. If $k=0$, we can conclude simply by the definition of $\delta(F)$. We then argue by double inductions on $k$ and $n$. Assume that Theorem \ref{general} has been proved when $\dim F<n$ or $\dim V<k$. Let $$N(F, m):=\mathrm{min}\{\frac{(n+m)!}{n!}\delta(F), m!\delta_1(F)\}$$ for all $m\in \mathbb Z_{>0}$. Then by Lemma \ref{no-interm}, $N(F, m)\geq m!$ and $N(F, m)\geq 2m!$ if $F$ is not uniruled.

For $t>t_0$ rational, we have by (\ref{compare1}) and (\ref{compare2}) that
 $$F'(t)\geq \frac{n+k}{M^{2g-1}}\vol(D_M, (\mu_M^*L+M^2tf_M^*H)_c\mid_{D_M})$$ and $$D^-G(t)\leq \mathrm{lim}_{M\rightarrow\infty} \frac{1}{M^{2g-1}}h^0(D_M, \big(\mu_{M}^*(L\otimes f^*H^{t})_c\otimes f_M^*Q\big)\mid_{D_M}).$$ Since $(L\otimes f^*H^{t})_c$ remains big on $X$, so is $(\mu_M^*L+M^2tf_M^*H)_c$ on $D_M$.
 Hence by induction, $F'(t)\geq (n+k)N(F, k-1)D^-G(t)$ for all $t>t_0$.
Thus we take integration from $t_0$ to $0$ and have that
\begin{eqnarray}\label{1ineq}&&\vol(L)-\vol(L+t_0f^*H)\\\nonumber &&\geq (n+k)N(F, k-1) (h^0_{f_*L}(0\underline{h})-h^0_{f_*L}(t_0\underline{h})).\end{eqnarray} After a small perturbation of $t_0$, we may assume that $t_0\in\mathbb Q$. If $h^0_{f_*L}(t_0\underline{h})=0$, nothing needs to be proved. Otherwise, $n+k>\kappa((L+t_0f^*H)_c)=k_1\geq k$. Take $M$ sufficiently large and divisible and consider the commutative diagram:
\begin{eqnarray*}
\xymatrix{
& X_M\ar[dl]_{\varphi_M}\ar[dd]^{f_M}\ar[r]^{\mu_M} & X\ar[dd]^f\\
Z_{M} \ar[dr]^{\nu_M}\\
&A\ar[r]^{\pi_M} & A,}
\end{eqnarray*}
where $\varphi_M$ is the eventual map of $\mu_M^*(L+t_0f^*H)$.
 By Lemma \ref{eventual}$, \mu_M^*(L+t_0f^*H)_c=\varphi_M^*L_1$, where $L_1$ is a line bundle on $Z_M$, $L_1=(L_1)_c$, and $$h^0(X_M, \mu_M^*(L+t_0f^*H)_c\otimes f_M^*Q)=h^0(Z_M, L_1\otimes \nu_M^*Q).$$ After birational modifications, we may also assume that $Z_M$ is smooth. By the induction  of $\dim F$, we have $$\vol(Z_M, L_1)=(L_1^m)_{Z_M}\geq k! h^0(\nu_{M*}L_1\otimes Q)= k! h^0(f_{M*}(\mu_M^*(L+t_0f^*H))\otimes Q)=M^{2g} k! h^0_{f_*L}(t_0\underline{h}).$$
Note that by the definition of $t_0$, $\kappa(L+(t_0+\epsilon)f^*H)_c)=n+k$ for any $\epsilon\in \mathbb Q_{>0}$ sufficiently small. After replacing $M$ by its multiple, we may assume that $\mu_M^*((L+(t_0+\epsilon)f^*H)_c)$ is a line bundle on $X_M$ and its eventual map is then generically finite. Thus
\begin{eqnarray*}
\vol(X, L+(t_0+\epsilon)f^*H)&=&\frac{1}{M^{2g}}\vol(X_M, \mu_M^*(L+(t_0+\epsilon)f^*H))\\&\geq &\frac{1}{M^{2g}}(\mu_M^*(L+(t_0+\epsilon)f^*H)_c^{n+k})_{X_M}\\&>&\frac{1}{M^{2g}}\big((\varphi_M^*L_1)^{k_1}\cdot \mu_M^*(L+(t_0+\epsilon)f^*H)_c^{n+k-k_1}\big)_{X_M}.
\end{eqnarray*}
Note that $F$ is still a connected component of a general fiber of $f_M$.  Hence a connected component $V$ of some general fiber of $\varphi_M$ is a general subvariety of $F$.  Moreover, the linear system of $\mu_M^*(L+(t_0+\epsilon)f^*H)_c$ defines a generically finite map of $F$. In particular, $(V\cdot\mu_M^*(L+(t_0+\epsilon)f^*H)_c^{n+k-k_1} )\geq \delta_1(F)$.
  Hence $$\big((\varphi_M^*L_1)^{k_1}\cdot \mu_M^*(L+(t_0+\epsilon)f^*H)_c^{n+k-k_1}\big)_{X_M}\geq \delta_1(F)(L_1^{k_1})_{Z_M},$$ and thus $$\vol(X, L+(t_0+\epsilon)f^*H)>\delta_1(F) k! h^0_{f_*L}(t_0\underline{h}).$$ By the continuity of volume functions, we conclude that
\begin{eqnarray}\label{2ineq}\vol(X, L+t_0f^*H)\geq\delta_1(F)k! h^0_{f_*L}(t_0\underline{h}).\end{eqnarray}
Combine (\ref{1ineq}) and (\ref{2ineq}), we have
\begin{eqnarray*}\vol(L)&\geq& \mathrm{min}\{(n+k)N(F, k-1), k!\delta_1(F)\}h^0(A, f_*L\otimes Q)\\&\geq & N(F, k)h^0(A, f_*L\otimes Q).\end{eqnarray*}
   \end{proof}

   \begin{rema}\label{variant}Let $L_F:=L|_F$.
   It is clear that  under the assumption of Theorem \ref{general}, we have $ \vol(L)\geq \mathrm{min}\{\frac{(n+k)!}{n!}\delta(L_F), k!\delta_1(L_F)\}h^0(A, f_*L\otimes Q)$.
   \end{rema}

   \begin{rema}\label{variant2}
  If $f_*L$ satisfies certain stable condition, the proof of Theorem \ref{general} gives a stronger inequality for $L$. For instance, it is easy to see that the following statement can be proved by the same argument:

  Assume that $f: X\rightarrow E$ is a surjective morphism to an elliptic curve, $L_c$ is big, and $f_*L$ is a semistable vector bundle on $E$, then $$\vol(L)\geq (\dim X)\frac{\vol(L_F)}{h^0(F, L_F)}h^0(f_*L\otimes Q).$$

   \end{rema}

   When $n=1$, the general fiber $F$ is a curve.
   \begin{coro}\label{line-bundle}
Let $f: X\rightarrow A$ be a morphism from a smooth projective variety to an abelian variety and $\dim f(X)=\dim X-1$. Denote by $d_C$ the gonality of a connected component $C$ of a general fiber of $f$. Let $\cL$ be a big line bundle on $X$ such that $\cL_c$ is also big. Then
  \begin{itemize}
  \item[(1)] $\vol(\cL)\geq (\dim X-1)!h^0(A, f_*\cL\otimes Q)$;
  \item[(2)] if $g(C)\geq 1$, $\vol(\cL)\geq \min\{(\dim X)!, d_C(\dim X-1)!\}h^0(A, f_*\cL\otimes Q);$
  \item[(3)] if $g(C)\geq 2$ and $C$ is neither hyperelliptic nor trigonal, and $\cL\mid_C\leq K_C$, then $\vol(\cL)\geq \min\{(2-\frac{2}{g(C)})(\dim X)!, d_C(\dim X-1)!\}h^0(A, f_*\cL\otimes Q).$
  \end{itemize}
\end{coro}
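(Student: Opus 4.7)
The plan is to apply Theorem~\ref{general} together with Remark~\ref{variant} to the morphism $f$. Under the hypothesis $\dim f(X)=\dim X-1$, a connected component of a general fiber is a smooth curve $F=C$, so in the notation of the theorem we have $n=1$ and $k=\dim X-1$. Since $\cL_c$ is assumed big, we obtain
\begin{equation*}
\vol(\cL)\;\geq\;\min\bigl\{(\dim X)!\,\delta(\cL|_C),\;(\dim X-1)!\,\delta_1(\cL|_C)\bigr\}\,h^0(A,f_*\cL\otimes Q),
\end{equation*}
and the entire task reduces to bounding the two Clifford-type invariants $\delta(\cL|_C)$ and $\delta_1(\cL|_C)$ on the curve $C$ under the three sets of hypotheses.

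For a curve, the only general positive-dimensional subvariety is $C$ itself, so $\delta_1(\cL|_C)$ is simply the smallest degree of an effective subdivisor of $\cL|_C$ whose complete linear system defines a nonconstant map. In particular $\delta_1(\cL|_C)\geq 1$ always, and $\delta_1(\cL|_C)\geq d_C$ once $C$ is not rational, by definition of the gonality. The invariant $\delta(\cL|_C)$ is controlled directly by Lemma~\ref{curve}: part~(1) of that lemma gives $\delta(\cL|_C)\geq 1/2$ for any $C$; part~(2) gives $\delta(\cL|_C)\geq 1$ as soon as $g(C)\geq 1$; and part~(3) gives $\delta(\cL|_C)\geq 2-2/g(C)$ under the extra hypotheses of the corollary's part~(3), namely that $C$ is neither hyperelliptic nor trigonal and $\cL|_C$ is a subdivisor of $K_C$.

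Substituting these bounds into the displayed inequality then finishes the proof. For part~(1), the uniform bound $\delta\geq 1/2$ combined with $\dim X\geq 2$ gives $(\dim X)!\,\delta(\cL|_C)\geq(\dim X)!/2\geq(\dim X-1)!$, while $\delta_1\geq 1$ supplies the same bound for the second argument of the minimum. Parts~(2) and~(3) are direct substitutions of the stronger Clifford estimates for $\delta$ together with $\delta_1\geq d_C$. Since Theorem~\ref{general} and Lemma~\ref{curve} have already absorbed the substantive analytic and geometric work, the corollary is essentially bookkeeping; the only point requiring a moment's care is verifying that Remark~\ref{variant} legitimately lets us replace $\delta(C),\delta_1(C)$ by the fiberwise invariants $\delta(\cL|_C),\delta_1(\cL|_C)$, which is what produces the sharpening by $d_C$ and by $2-2/g(C)$ in parts~(2) and~(3).
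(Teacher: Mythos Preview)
Your proof is correct and follows exactly the approach the paper intends: the paper's own proof is the one-liner ``We conclude by combining Theorem~\ref{general} and Lemma~\ref{curve}'', and you have simply spelled out the bookkeeping, correctly invoking Remark~\ref{variant} to pass from $\delta(C),\delta_1(C)$ to $\delta(\cL|_C),\delta_1(\cL|_C)$ (which is indeed needed for part~(3), where the hypothesis $\cL|_C\le K_C$ is a condition on the specific line bundle rather than on the curve).
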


\begin{proof} We conclude by combining Theorem \ref{general} and Lemma \ref{curve}.
 \end{proof}

 Similarly, if  $f: X\rightarrow A$ be a morphism from a smooth projective variety to an abelian variety and a general fiber $F$ is a very general smooth hypersurface of degree $d\geq 2n+1$ in $\mathbb P^{n+1}$.  Then, we have
 \begin{coro}\label{hypersurfaces}Let $\cL$ be a line bundle on $X$ such that $\cL_c$ is big, we have $$\vol(\cL)\geq \min\{\frac{(\dim X)!}{n!}, (d-n)(\dim X-n)!\}h^0(A, f_*\cL\otimes Q).$$
 \end{coro}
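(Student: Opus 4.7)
The plan is to derive Corollary \ref{hypersurfaces} as a direct specialization of Theorem \ref{general}, where the only non-formal input consists of lower bounds for the birational invariants $\delta(F)$ and $\delta_1(F)$ attached to a very general hypersurface $F\subset\mathbb P^{n+1}$ of degree $d\geq 2n+1$. Set $n=\dim F$ and $k=\dim X-n=\dim V$; Theorem \ref{general} gives
\[
\vol(\cL)\geq\min\Bigl\{\tfrac{(n+k)!}{n!}\,\delta(F),\;k!\,\delta_1(F)\Bigr\}\,h^0(A,f_*\cL\otimes Q),
\]
and since $n+k=\dim X$, it suffices to prove $\delta(F)\geq 1$ and $\delta_1(F)\geq d-n$.

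For $\delta_1(F)$, I would simply invoke the general inequality $\delta_1(F)\geq \mathrm{cov.gon}(F)$ recorded just before Lemma \ref{no-interm}. By Remark \ref{degrees} (which cites \cite{BDELU} for the very-general hypersurface case, though the weaker bound $d\geq n+2$ already suffices for this part), $\mathrm{cov.gon}(F)\geq d-n$. Hence $k!\,\delta_1(F)\geq (d-n)(\dim X-n)!$, producing the second term in the min.

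For $\delta(F)$, Lemma \ref{no-interm} gives $\delta(F)\geq\min\bigl\{\mathrm{irr}(F)/(\dim F+1),\,1\bigr\}$, and Remark \ref{degrees} supplies $\mathrm{irr}(F)=d-1$ for a very general hypersurface of degree $d\geq 2n+1$. The elementary check is then that $\tfrac{d-1}{n+1}\geq 1$ whenever $d\geq 2n+1$ and $n\geq 1$, since $d-1\geq 2n\geq n+1$; thus $\delta(F)\geq 1$, giving $\tfrac{(\dim X)!}{n!}\,\delta(F)\geq \tfrac{(\dim X)!}{n!}$. (The case $n=0$ is excluded by the hypothesis, but it would also follow trivially from Barja--Zhang's original Severi inequality.) Combining the two bounds in the min of Theorem \ref{general} yields the stated inequality.

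There is no real obstacle here: the statement is a dictionary translation of Theorem \ref{general} under the inputs of Remark \ref{degrees}. The only subtlety worth flagging in the write-up is the mild arithmetic check $d-1\geq n+1$, which is what selects the hypothesis $d\geq 2n+1$ (rather than the weaker $d\geq n+2$) so that the $\delta(F)$-term dominates the constant $1$ rather than the fractional bound $\tfrac{d-1}{n+1}$; otherwise one would instead obtain the slightly weaker constant $\tfrac{(d-1)(\dim X)!}{(n+1)!}$.
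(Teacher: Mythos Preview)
Your proposal is correct and follows essentially the same route as the paper's proof: invoke Theorem \ref{general}, then use Remark \ref{degrees} together with Lemma \ref{no-interm} and the inequality $\delta_1(F)\geq \mathrm{cov.gon}(F)$ to bound $\delta(F)\geq 1$ and $\delta_1(F)\geq d-n$. Your write-up is slightly more detailed in spelling out the arithmetic check $\tfrac{d-1}{n+1}\geq 1$ from $d\geq 2n+1$, which the paper leaves implicit.
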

 \begin{proof}
 By Remark \ref{degrees}, we know that $\mathrm{irr}(F)=d-1$ and $\mathrm{cov.gon}(F)\geq g-n$. Hence by Lemma \ref{no-interm}, $\delta(F)\geq 1$ and we also have $\delta_1(F)\geq \mathrm{cov.gon}(F)\geq g-n$. We then conclude by Theorem \ref{general}.
 \end{proof}

In practice, we also need to deal with big line bundles $L$ whose continuously globally generated part $L_c$ is not big. In this case, we denote by
 \begin{eqnarray*}
 \xymatrix{
 X\ar[r]^{\varphi_L}\ar[d]^f & Z_L\ar[dl]^{\rho}\\
 A}
 \end{eqnarray*}
 the eventual map of $L$ and let $G$ be a general fiber of $\varphi_L$.
 \begin{prop}\label{nonbig}Then $\vol(L)\geq \vol(X| G, L)k!h^0(A, f_*L\otimes Q)$, for $Q\in\Pic^0(A)$ general.
 \end{prop}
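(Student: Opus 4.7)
My plan is to reduce to Theorem~\ref{general} applied on the eventual image, combined with a slicing-type volume comparison. Since $L_c$ is not big, the eventual map $\varphi_L\colon X\to Z_L$ has positive-dimensional general fibers of dimension $n_1:=\dim G$, and $k_1:=\dim Z_L$ satisfies $k\leq k_1<\dim X$. After birational modifications and an \'etale base change (which does not affect the stated inequality), I may assume that $L_c=\varphi_L^*L_1$ is a line bundle, where $L_1$ is the restriction to $Z_L$ of the tautological line bundle on $\mathbb{P}_A((f_*L)_c)$. By construction $L_1$ is ample on $Z_L$, hence big; moreover $\rho_*L_1=(f_*L)_c$ is continuously globally generated on $A$, so $(L_1)_c=L_1$ with respect to $\rho\colon Z_L\to A$, whose image is $V=f(X)$ of dimension $k$. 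Passing to the Stein factorization if necessary, I also assume $\varphi_L$ has connected fibers.

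Applying Theorem~\ref{general} to $(Z_L,L_1,\rho)$ then yields
\[
\vol_{Z_L}(L_1)\;\geq\;k!\,h^0(A,\rho_*L_1\otimes Q)\;=\;k!\,h^0(A,f_*L\otimes Q),
\]
the equality of $h^0$'s following from $h^0(Z_L,L_1\otimes\rho^*Q)=h^0(X,L_c\otimes f^*Q)=h^0(X,L\otimes f^*Q)$ for general $Q\in\Pic^0(A)$, by Leray applied to $\varphi_L$ (with connected fibers) combined with the defining property of $L_c$. It therefore suffices to prove the volume comparison
\[
\vol_X(L)\;\geq\;\binom{\dim X}{k_1}\,\vol(X|G,L)\,\vol_{Z_L}(L_1),
\]
since $\binom{\dim X}{k_1}\geq 1$ and combining both inequalities gives the proposition.

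For this comparison I would argue as follows. Fix $m\gg 0$ and pick $k_1$ general divisors $D_i\in|mL_1|$ on $Z_L$; their common intersection consists of $N\approx\vol_{Z_L}(L_1)\,m^{k_1}/k_1!$ isolated points, and $\varphi_L^{-1}$ of this set is a disjoint union of $N$ general fibers of $\varphi_L$. On each such fiber $G_y$, the image of the restriction map $H^0(X,mL)\to H^0(G_y,mL|_{G_y})$ has dimension $\approx\vol(X|G,L)\,m^{n_1}/n_1!$ by the very definition of the restricted volume. The hard part will be to show that these restrictions are asymptotically independent across the $N$ fibers, so that $h^0(X,mL)\gtrsim N\cdot\vol(X|G,L)\,m^{n_1}/n_1!$, which after multiplying by $(\dim X)!/m^{\dim X}$ yields the comparison. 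I expect two complementary approaches to work: (i) an asymptotic Serre-type vanishing argument for $\cI_{\varphi_L^{-1}(Y)}\otimes mL$, since $L$ is big and the codimension-$k_1$ subscheme $\varphi_L^{-1}(Y)$ is a disjoint union of general fibers; or (ii) a Fujita approximation of $L_1$ by a nef class $P$ on a modification of $Z_L$, pulled back to $X$, together with the multinomial inequality for the big class $L$ and the identity $(\varphi_L^*P)^{k_1}\cdot(L|_G)^{n_1}=\vol(P)\vol_G(L|_G)$ on a general fiber. The second approach has the advantage of reducing the entire matter to an intersection-number computation and fits naturally with the cohomological/volume machinery already deployed in the paper.
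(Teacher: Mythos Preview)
Your overall architecture is the same as the paper's: reduce to the eventual image $Z_L$, apply the Severi-type bound there to get $\vol_{Z_L}(L_1)\geq k!\,h^0(A,f_*L\otimes Q)$, and then compare $\vol_X(L)$ with $\vol(X|G,L)\cdot\vol_{Z_L}(L_1)$. Step~1 is fine and coincides with the paper's argument.

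The gap is in Step~2. First, the binomial factor $\binom{\dim X}{k_1}$ is neither needed nor obviously available; the paper only proves (and only uses) the plain product bound. More importantly, in your approach~(ii) you Fujita-approximate the wrong object. The class $L_1$ on $Z_L$ is already nef (it is the restriction of the tautological bundle on $\mathbb{P}_A((f_*L)_c)$), so approximating it gains nothing. The genuine difficulty is that $L$ on $X$ is \emph{not} nef, so the quantity you write as $(L|_G)^{n_1}$ is not an intersection number that computes either $\vol_G(L|_G)$ or $\vol(X|G,L)$; these three numbers are in general different, and your displayed identity conflates them. The paper resolves this by applying Fujita approximation \emph{for the restricted volume} of $L$ (as in \cite[Theorem~2.13]{ELMNP}): after a modification one writes $L=A+E$ with $A$ ample $\mathbb{Q}$-divisor, $E$ effective, and $(A^{\dim G}\cdot G)>\vol(X|G,L)-\epsilon$. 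Now both $\varphi_L^*M$ and $A$ are nef classes dominated by $L$, so monotonicity of positive intersection products gives
\[
\vol(L)=\langle L^{\dim X}\rangle\;\geq\;(\varphi_L^*M)^{k_1}\cdot A^{\dim G}
\;=\;\vol_{Z_L}(M)\cdot(A^{\dim G}\cdot G)\;>\;(\vol(X|G,L)-\epsilon)\,\vol_{Z_L}(M),
\]
and letting $\epsilon\to 0$ finishes. Your approach~(i) (section counting with asymptotic independence) might be made to work, but it is considerably more delicate than this one-line intersection-theoretic argument, and the Serre-type vanishing you invoke is not available for merely big $L$.
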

 \begin{proof}
By the construction of the eventual map, we may assume that $Z_L$ is smooth and there exists a nef line bundle $M$ on $Z_L$ such that $\varphi_L^*M\preceq L$ and $(f_*L)_c=(\rho_*M)_c$. Then $\vol(Z_L, M)\geq k!h^0(A, \rho_*M\otimes Q)=k!h^0(A, f_*L\otimes Q)$. On the other hand, we know by \cite[Proposition 2.11 and Theorem 2.13]{ELMNP} that we can compute the volume or restricted volume of a line bundle by the asymptotic intersection number and the Fujita approximation holds for the restricted volumes. To be more precise, for any $\epsilon>0$, after birational modifications, there exists a decomposition $L=A+E$, where $A$ is an ample $\mathbb Q$-divisor and $E$ is an effective $\mathbb Q$-divisor such that $(A^{\dim G}\cdot G)>\vol(X|G, L)-\epsilon$.
 We then conclude that $$\vol(L)\geq (\varphi_L^*M)^{\dim Z_L}\cdot A^{\dim G}>(\vol(X|G, L)-\epsilon)\vol(Z_L, M).$$
 \end{proof}

\begin{rema}We could compare the above proposition with a different result. Assume that   $h: X\rightarrow Y$ is a fibration between smooth projective varieties of general type with a general fiber $F$. Then Kawamata \cite[Theorem 7.1]{Zh-Ka} showed that
\begin{eqnarray*}\vol(X)\geq \binom{\dim X}{\dim Y}\vol(F)\vol(Y).
\end{eqnarray*}
The main ingredient for such a nice bound is Viehweg's weak positivity of $f_*\omega_{X/Y}^m$.
\end{rema}
 \begin{exam}
 We consider the following example of Fletcher: $Y\subset \mathbb{P}(1,3, 4, 5, 7)$ a general degree $21$ hypersurface. Then $Y$ has only terminal isolated singularities and $\omega_Y=\cO_Y(1)$. Let $Y'$ be a smooth model of $Y$, then $p_g(Y')=1$ and $\vol(Y')=\frac{1}{20}$. Let $X=Y'\times C$, where $C$ is a genus $2$ curve. Then $h^0(A_X, a_{X*}\omega_X\otimes Q)=1$ and $\vol(X)=\frac{2}{5}$. This example seems to contradict Corollary B of \cite{bar}.
 \end{exam}
 \subsection{Canonical bundle and pluri-canonical bundle}
When the line bundle $L$ is the canonical or pluricanonical bundles, we can often drop the condition on the bigness condition of  $L_c$, due to the positivity of $f_*L$.
 \begin{theo}\label{canonical-bundle-general}
Let $f: X\rightarrow A$ be a morphism from a smooth projective $m$-dimensional variety to an abelian variety with $F$ a connected component of a general fiber of $f$. Assume that $\dim F=n$.
\begin{itemize}
\item[(1)]If $|K_F|$ induces a generically finite map of $F$. Then
$$\vol(K_X)\geq \min\{\frac{m!}{n!}\delta(K_F), \delta_1(K_F)(m-n)!\}h^0(A, f_*\omega_X\otimes Q).$$
\item[(2)]If $|K_F|$ induces a map, whose general fiber $V$ is of dimension $n_1<n$, then $$\vol(K_X)\geq  \vol(F|V, K_F)(m-n)!h^0(A, f_*\omega_X\otimes Q).$$
\item[(3)] If $|rK_F|$ induces a generically finite map of $F$, for some integer $r>1$, then $$r^m\vol(K_X)\geq \min\{\frac{m!}{n!}\delta(rK_F), \delta_1(rK_F)(m-n)!\}h^0(A, f_*\omega_X^r\otimes Q).$$
\end{itemize}
\end{theo}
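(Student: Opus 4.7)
\medskip
\noindent The plan is to reduce all three parts to Theorem \ref{general} (in the refined form of Remark \ref{variant}) by a perturbation $K_X \rightarrow K_X+\epsilon f^*H$ with $\epsilon\in\mathbb{Q}_{>0}$, followed by passage to the limit $\epsilon\to 0^+$.

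For (1), the first task is to verify that $(K_X+\epsilon f^*H)_c$ is big on $X$ for every $\epsilon\in\mathbb{Q}_{>0}$, where everything is interpreted via the $\mathbb{Q}$-twist conventions, i.e., after pulling back by $\mu_M$ with $M^2\epsilon\in\mathbb{Z}$. By Hacon's generic vanishing theorem, $f_*\omega_X$ is a GV-sheaf on $A$, so tensoring by the ample twist $H^\epsilon$ makes $f_*\omega_X\otimes H^\epsilon$ M-regular and hence continuously globally generated in the sense of the preliminaries. The evaluation map $f^*(f_*\omega_X\otimes H^\epsilon)\rightarrow \omega_X\otimes f^*H^\epsilon$ restricts on a general fiber $F$ to the linear system $|K_F|$ (since $f^*H^\epsilon$ is trivial on $F$), which is generically finite by hypothesis; combined with the generic finiteness of $f$ onto its image, this forces the eventual map of $K_X+\epsilon f^*H$ on $X$ to be generically finite, whence $(K_X+\epsilon f^*H)_c$ is big.

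The remaining hypothesis of Theorem \ref{general} being in place, I apply its refinement (Remark \ref{variant}) to $L=K_X+\epsilon f^*H$. The identification $L|_F = K_F$ gives $\delta(L|_F)=\delta(K_F)$ and $\delta_1(L|_F)=\delta_1(K_F)$, producing
\[
\vol(K_X+\epsilon f^*H)\ \geq\ \min\!\Big\{\frac{m!}{n!}\delta(K_F),\ (m-n)!\,\delta_1(K_F)\Big\}\, h^0(A,f_*\omega_X\otimes H^\epsilon\otimes Q).
\]
Letting $\epsilon\to 0^+$, continuity of the volume function (Theorem \ref{BFJ}) controls the left-hand side, while continuity of the cohomological rank function from \cite{JP} controls the right-hand side, yielding the desired inequality. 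Part (2) proceeds along the same lines but invokes Proposition \ref{nonbig} in place of Theorem \ref{general}, since the eventual map of $K_F$ is no longer generically finite; the general fiber $V$ of that map plays the role of a positive-dimensional general subvariety, and one tracks the restricted volume $\vol(F|V,K_F)$. Part (3) follows from (1) applied to the $r$-fold twist $rK_X+\epsilon f^*H$: the identity $(rK_X)|_F = rK_F$ with $|rK_F|$ generically finite yields the invariants $\delta(rK_F)$ and $\delta_1(rK_F)$, and $\vol(rK_X)=r^m\vol(K_X)$ supplies the factor $r^m$.

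The principal obstacle is Step 1, namely the bigness of $(K_X+\epsilon f^*H)_c$. Although Hacon's theorem together with the ample twist plausibly makes $f_*\omega_X\otimes H^\epsilon$ continuously globally generated, one must carefully check that, after an appropriate isogeny $\mu_M$, the restriction map $H^0\bigl(X_M,\mu_M^*(K_X+\epsilon f^*H)\otimes f_M^*Q\bigr)\rightarrow H^0(F,K_F)$ is essentially surjective for general $Q\in\Pic^0(A)$, so that the eventual map on $X$ genuinely recovers $|K_F|$ on a general fiber; this is the point where the $\mathbb{Q}$-twist framework of \cite{JP} and the continuity of rank functions enter in an essential way.
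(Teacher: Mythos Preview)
Your proposal is correct and follows essentially the same route as the paper: perturb to $K_X+\epsilon f^*H$, use positivity of $f_*\omega_X$ (the paper invokes the decomposition theorem of Remark~\ref{decom} rather than Hacon's generic vanishing, but either yields that $f_*\omega_X\otimes H^\epsilon$ is M-regular and hence coincides with its own CGG part, which also dissolves the ``principal obstacle'' you flag) to obtain bigness of $(K_X+\epsilon f^*H)_c$, apply Theorem~\ref{general} in the form of Remark~\ref{variant}, and let $\epsilon\to 0$. The one minor divergence is in part~(3): the paper observes that $f_*\omega_X^r$ is already IT$^0$ for $r\geq 2$ (citing \cite{LPS}), so $(rK_X)_c$ is big without perturbation and Theorem~\ref{general} applies directly to $L=rK_X$; your limit argument works too but is slightly less direct.
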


\begin{proof}
 The proof is almost a direct application of Theorem \ref{general}. For $(1)$, we need an extra ingredient. By the decomposition theorem in Remark (\ref{decomposition}), we know that $f_*\omega_X\otimes H^{\epsilon}$ is M-regular and hence is continuously globally generated for all $\epsilon\in \mathbb Q_{>0}$ sufficiently small. By the assumption that $|K_F|$ induces a generically finite map of $X$, we conclude that $(K_X+\epsilon f^*H)_c$ is big and thus we can apply Theorem \ref{general} to conclude that $\vol(K_X+\epsilon f^*H)\geq \min\{\frac{m!}{n!}\delta(K_F), \delta_1(K_F)(m-n)!\}h^0(A, f_*\omega_X\otimes H^{\epsilon}\otimes Q),$ for $Q\in\Pic^0(A)$ general. Let $\epsilon\rightarrow 0$, we finish the proof of $(1)$.

The proof of $(2)$ is similar. By the decomposition theorem, $f_*\omega_X\otimes H^{\epsilon}$ is M-regular and hence the eventual map of $(K_X+\epsilon f^*H)$ induces the canonical map of $F$.  Hence we apply  Proposition \ref{nonbig} to conclude.

 For $(3)$, we simply note that $f_*\omega_X^r$ is IT$^0$ (see for instance \cite{LPS}) and hence is M-regular and we conclude directly by Theorem \ref{general}.

\end{proof}

Combining Corollary \ref{line-bundle} and Theorem \ref{canonical-bundle-general}, we have
\begin{coro}\label{curve-canonical}Assume that $F$ is a curve of genus $g\geq 2$, then $$\vol(K_X)\geq \min\{m!, d_F(m-1)!\}h^0(A, f_*\omega_X\otimes Q).$$ If $F$ is neither hyperelliptic nor trigonal, then  $$\vol(K_X)\geq \min\{(2-\frac{2}{g(F)})m!, d_F(m-1)!\}h^0(A, f_*\omega_X\otimes Q).$$
If $F$ is a generic curve of genus $g\geq 5$, then $$\vol(K_X)\geq \min\{(2-\frac{2}{g(F)})m!, \lfloor\frac{g+3}{2}\rfloor(m-1)!\}h^0(A, f_*\omega_X\otimes Q).$$

\end{coro}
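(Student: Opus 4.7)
The plan is to specialize Theorem \ref{canonical-bundle-general}(1) to the curve case $n=1$ and compute the two invariants $\delta(K_F)$ and $\delta_1(K_F)$ using Lemma \ref{curve}. Since $g(F)\geq 2$, the canonical system $|K_F|$ is base point free and defines a generically finite morphism (birational if $F$ is not hyperelliptic, of degree $2$ otherwise), so the hypothesis of Theorem \ref{canonical-bundle-general}(1) is satisfied and with $n=1$, $m=\dim X$ it gives
\[
\vol(K_X)\;\geq\;\min\bigl\{m!\,\delta(K_F),\;(m-1)!\,\delta_1(K_F)\bigr\}\,h^0(A,f_*\omega_X\otimes Q).
\]
It then remains to estimate the two invariants of the curve $F$.

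For $\delta(K_F)$, recall that the infimum is taken over effective divisors $D\preceq K_F$ with $h^0(D)\geq 2$; such divisors satisfy $\deg D\leq 2g-2$, so Lemma \ref{curve}(2) yields $\delta(K_F)\geq 1$ for every $F$ with $g\geq 2$, and Lemma \ref{curve}(3) upgrades this to $\delta(K_F)\geq 2-\tfrac{2}{g}$ whenever $F$ is neither hyperelliptic nor trigonal. For $\delta_1(K_F)$, the only positive dimensional subvariety of a curve is $F$ itself, so $\delta_1(K_F)$ equals the minimum of $\deg D$ over all effective $D\preceq K_F$ with $h^0(D)\geq 2$. By removing any base locus one may assume $D$ is base point free and defines a morphism $F\to\mathbb{P}^1$, so this minimum is exactly the gonality $d_F$; conversely a minimal base point free pencil $D$ of degree $d_F$ automatically satisfies $D\preceq K_F$, because $h^0(K_F-D)=g+1-d_F\geq 1$ using $d_F\leq\lfloor(g+3)/2\rfloor\leq g$ (Brill-Noether existence). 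Hence $\delta_1(K_F)=d_F$. Plugging these bounds into the inequality above gives (i) unconditionally and (ii) under the hypothesis that $F$ is neither hyperelliptic nor trigonal.

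For (iii), one uses that a generic curve $F$ of genus $g\geq 5$ is neither hyperelliptic nor trigonal, and its gonality equals $\lfloor(g+3)/2\rfloor$ by the classical Brill-Noether theorem. Substituting $d_F=\lfloor(g+3)/2\rfloor$ and $\delta(K_F)\geq 2-\tfrac{2}{g}$ into the master inequality yields the claimed bound. There is no real obstacle in this corollary beyond making the identification $\delta_1(K_F)=d_F$ precise; the only mild point to verify is that the divisor realizing the gonality may indeed be chosen to satisfy $D\preceq K_F$, which is handled by the Riemann-Roch computation above.
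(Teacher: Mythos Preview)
Your argument is correct and follows essentially the same route as the paper: specialize Theorem~\ref{canonical-bundle-general}(1) to $n=1$ and feed in the curve estimates from Lemma~\ref{curve}, together with the Brill--Noether value of the gonality for a generic curve. The paper's proof is terser (it just cites Corollary~\ref{line-bundle} and Theorem~\ref{canonical-bundle-general}), but your explicit verification that $\delta_1(K_F)=d_F$ via the Riemann--Roch check $h^0(K_F-D)=g+1-d_F\geq 1$ is exactly the content being invoked.
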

\begin{proof}
Only the last part needs to be explained. Indeed, if $F$ is a generic curve of genus $g\geq 5$, then $d_F=\lfloor\frac{g+3}{2}\rfloor\geq 4$. Hence we conclude by Corollary \ref{line-bundle} and Theorem \ref{canonical-bundle-general}.
\end{proof}
\begin{coro}\label{surface-canonical}
Assume that $F$ is a surface of general type. Then $$\vol(K_X)\geq (m-2)!h^0(A, f_*\omega_X\otimes Q).$$
Furthermore, unless that  $\vol(F)=1$, we will always have
$$\vol(K_X)\geq 2(m-2)!h^0(A, f_*\omega_X\otimes Q).$$

\end{coro}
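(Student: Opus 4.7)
The strategy is to apply Theorem~\ref{canonical-bundle-general} while decomposing according to the behaviour of the canonical map $\varphi_{K_F}$ of the fiber. Since $F$ is of general type, it is not uniruled. Assume $m\geq 3$ (so that the image of $f$ is of positive dimension $m-2$), and further that $p_g(F)\geq 1$: if $p_g(F)=0$ then the generic rank of $f_*\omega_X$ vanishes, the M-regular summand $\mathcal{F}_X$ in the decomposition of Remark~\ref{decom} is zero, and $h^0(A,f_*\omega_X\otimes Q)=0$ for general $Q\in\Pic^0(A)$, making the statement trivial.

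Consider first the case where $|K_F|$ induces a generically finite map. By Lemma~\ref{general-clifford}(3) and the observation following it, $\delta(K_F)\geq 2/3$ and $\delta_1(K_F)\geq 2$. Theorem~\ref{canonical-bundle-general}(1) then gives
$$\vol(K_X)\ \geq\ \min\{m!/3,\ 2(m-2)!\}\,h^0(A,f_*\omega_X\otimes Q),$$
and since $m(m-1)\geq 6$ for $m\geq 3$, both terms are at least $2(m-2)!$, delivering the strong bound.

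Next, if $p_g(F)\geq 2$ but $\varphi_{K_F}(F)$ is one-dimensional, Theorem~\ref{canonical-bundle-general}(2) reduces the problem to proving $\vol(F|V,K_F)\geq 2$ for $V$ a general fiber. Passing to the minimal model $F_{\min}$, the curve $V$ is a general member of the base-point-free pencil obtained from $|K_F|$ after resolving base points, so $V^2=0$, and adjunction gives $K_{F_{\min}}\cdot V=2g(V)-2$. Since $K_{F_{\min}}$ is nef and big, the Hodge index theorem forces $K_{F_{\min}}\cdot V>0$ (otherwise $V\equiv 0$), hence $g(V)\geq 2$ and $\vol(F|V,K_F)=K_{F_{\min}}\cdot V\geq 2$.

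The remaining subcase is $p_g(F)=1$, where $\varphi_{K_F}$ is constant and Theorem~\ref{canonical-bundle-general}(2) does not formally apply. I mimic its proof: for small rational $\epsilon>0$, $f_*\omega_X\otimes H^\epsilon$ is M-regular by Remark~\ref{decom}, and since $|K_F|$ is constant the eventual map of $L_\epsilon:=K_X+\epsilon f^*H$ is birationally $f$ itself, with general fiber $F$. Proposition~\ref{nonbig} with $G=F$ and $k=m-2$ then gives
$$\vol(L_\epsilon)\ \geq\ \vol(F)\,(m-2)!\,h^0(A,f_*\omega_X\otimes H^\epsilon\otimes Q),$$
and letting $\epsilon\to 0^+$ yields $\vol(K_X)\geq \vol(F)(m-2)!\,h^0(A,f_*\omega_X\otimes Q)$ by continuity of volume and cohomological rank functions. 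Since $\vol(F)\geq 1$ always and $\vol(F)\geq 2$ whenever $\vol(F)\neq 1$, this delivers both bounds. The main obstacle lies in this last step: verifying that the eventual map of $K_X+\epsilon f^*H$ is birationally equivalent to $f$ when $\varphi_{K_F}$ is constant, and applying Proposition~\ref{nonbig} so that the restricted volume equals precisely $\vol(F)$.
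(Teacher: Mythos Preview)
Your overall case split and the use of Theorem~\ref{canonical-bundle-general} mirror the paper's proof. The $p_g(F)=0$ and generically-finite cases are fine, and your treatment of $p_g(F)=1$ is essentially what the paper does (the paper simply asserts $\vol(K_X)\geq \vol(K_F)(m-2)!h^0(A,f_*\omega_X\otimes Q)$ without spelling out the limiting argument, so your concern there is honest but not fatal).

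The genuine gap is in the pencil case $p_g(F)\geq 2$. You write ``adjunction gives $K_{F_{\min}}\cdot V=2g(V)-2$'', but you have just placed $V$ on a blow-up $F'$ of $F_{\min}$ where $V^2=0$. Adjunction on $F'$ gives $K_{F'}\cdot V=2g(V)-2$, not $\sigma^*K_{F_{\min}}\cdot V$; these differ by $E\cdot V$, where $E$ is the exceptional divisor of $\sigma\colon F'\to F_{\min}$. The restricted volume is governed by the nef part, i.e.\ $\vol(F|V,K_F)=\sigma^*K_{F_{\min}}\cdot V$, so your chain only yields $\sigma^*K_{F_{\min}}\cdot V\geq 1$, not $\geq 2$. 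This is not a technicality: for a minimal surface with $K_{F_{\min}}^2=1$ and $p_g=2$, the canonical pencil has a single base point, $E\cdot V=1$, and one computes $\sigma^*K_{F_{\min}}\cdot V=1$ while $K_{F'}\cdot V=2$. Thus your argument would wrongly claim the strong bound in a case that is precisely the exception $\vol(F)=1$ in the statement.

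The paper closes this by combining Noether's inequality (so $K_{F_{\min}}^2\geq 2$ once $\vol(F)\neq 1$) with the Hodge index theorem applied to $\sigma^*K_{F_{\min}}$ and $V$ to force $\sigma^*K_{F_{\min}}\cdot V\geq 2$; you should replace your adjunction step with that argument.
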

\begin{proof}

We apply Theorem \ref{canonical-bundle-general}. Note that if $p_g(F)=0$, then nothing needs to be proved.

 If $|K_F|$ is generically finite, then by (1) of Theorem \ref{canonical-bundle-general},
$\vol(K_X)\geq 2(m-2)! h^0(A, f_*\omega_X\otimes Q)$.

If $|K_F|$ induces a map to a curve, and let $V$ be a general pencil of $K_F$. By (2) of Theorem \ref{canonical-bundle-general}, $\vol(K_X)\geq  \vol(F|V, K_F)\vol(Z_L, M)\geq \vol(F|V, K_F)(m-2)! h^0(A, f_*\omega_X\otimes Q)$. Let $\sigma: F\rightarrow F_0$ be the morphism from $F$ to its minimal model. Then $\vol(F|V, K_F)\geq (\sigma^*K_{F_0}\cdot V)$. By  Noether's inequality (\cite[Chapter VII, Theorem 3.1]{BHP}), we know that $K_{F_0}^2\geq 2p_g(F)-2$. Hence we conclude by Hodge index that
 $ \vol(F|V, K_F)\geq 2$, unless $\vol(K_F)=1$ and $p_g(F)=2$.
If $p_g(F)=1$, then $\vol(K_X)\geq  \vol(K_F) (m-2)! h^0(A, f_*\omega_X\otimes Q)$.

 \end{proof}

 \begin{coro}
 Assume that $F$ is a very general hypersurface of degree $d\geq 2n+1$ in $\mathbb P^{n+1}$, then $$\vol(X)\geq \min\{\frac{(\dim X)!}{n!}, (d-n)(\dim X-n)!\}h^0(A, f_*\omega_X\otimes Q).$$
 \end{coro}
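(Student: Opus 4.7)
The plan is to deduce this directly from Theorem~\ref{canonical-bundle-general}(1), using the hypersurface setup to control the birational invariants $\delta(K_F)$ and $\delta_1(K_F)$. In particular, this is the canonical-bundle analogue of Corollary~\ref{hypersurfaces}; the same numerical bounds should propagate once we verify that the canonical map of $F$ is generically finite.

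First I would check that for a smooth hypersurface $F$ of degree $d\geq 2n+1$ in $\mathbb{P}^{n+1}$, adjunction gives $K_F=\mathcal{O}_F(d-n-2)$, and the inequality $d-n-2\geq n-1$ shows that $K_F$ is very ample for $n\geq 2$ (and for $n=1$ the range $d\geq 4$ still gives very ample $K_F=\mathcal{O}_F(d-3)$). Hence $|K_F|$ induces a closed embedding, which is certainly generically finite, and Theorem~\ref{canonical-bundle-general}(1) applies to give
\[
\vol(K_X)\geq\min\bigl\{\tfrac{m!}{n!}\,\delta(K_F),\ \delta_1(K_F)(m-n)!\bigr\}\,h^0(A,f_*\omega_X\otimes Q),
\]
where $m=\dim X$.

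Next I would bound the invariants. Since $\delta(F)$ and $\delta_1(F)$ are defined as minima over all line bundles inducing generically finite maps, they give lower bounds on $\delta(K_F)$ and $\delta_1(K_F)$ respectively. By Remark~\ref{degrees}, a very general such hypersurface has $\mathrm{irr}(F)=d-1$ and $\mathrm{cov.gon}(F)\geq d-n$. Lemma~\ref{no-interm} then yields
\[
\delta(K_F)\geq \delta(F)\geq \min\!\Bigl\{\tfrac{d-1}{n+1},\,1\Bigr\}=1,
\]
because $d-1\geq 2n\geq n+1$; and the chain $\delta_1(K_F)\geq \delta_1(F)\geq \mathrm{cov.gon}(F)\geq d-n$ handles the other invariant. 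Substituting these two bounds into the previous inequality gives exactly the claim.

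The only genuine subtlety I foresee is the corner case $n=1$, $d=3$, where $F$ is elliptic and $K_F$ is trivial, so the hypothesis of Theorem~\ref{canonical-bundle-general}(1) fails. This case can be absorbed by applying Corollary~\ref{hypersurfaces} to the perturbed line bundle $K_X+\epsilon f^*H$, whose continuously globally generated part is big by the decomposition theorem (Remark~\ref{decom}) since $f_*\omega_X\otimes H^\epsilon$ is $M$-regular, and then letting $\epsilon\to 0$ by continuity of the volume function (Theorem~\ref{BFJ}) and of the cohomological rank function. Aside from this bookkeeping, everything is a clean combination of the already-established results, and I do not expect any hard step.
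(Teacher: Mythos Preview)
Your main argument is correct and follows the paper's approach exactly: the paper's proof is the one-liner ``This is a corollary of Theorem~\ref{canonical-bundle-general} and Corollary~\ref{hypersurfaces}'', and you have simply unpacked that reference, verifying that $|K_F|$ is generically finite so that Theorem~\ref{canonical-bundle-general}(1) applies, and then inserting the bounds $\delta(K_F)\geq\delta(F)\geq 1$ and $\delta_1(K_F)\geq\delta_1(F)\geq d-n$ coming from Remark~\ref{degrees} and Lemma~\ref{no-interm}, exactly as in the proof of Corollary~\ref{hypersurfaces}.

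One remark on the corner case you flag: your proposed fix for $n=1$, $d=3$ does not work as written. When $F$ is an elliptic curve, $K_F=\cO_F$, so $(K_X+\epsilon f^*H)|_F\simeq\cO_F$ and the relative evaluation map collapses each fiber of $f$ to a point; consequently $(K_X+\epsilon f^*H)_c$ has numerical dimension at most $\dim X-1$ and is not big, so the hypothesis of Corollary~\ref{hypersurfaces} fails and you cannot invoke it. The paper's one-line proof does not address this boundary case either, so your treatment is not worse than the original; but the perturbation argument you sketch does not close the gap.
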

 \begin{proof} This is a corollary of  Theorem \ref{canonical-bundle-general} and Corollary \ref{hypersurfaces}.
 \end{proof}
\section{Irregular varieties with small volumes}
Severi type inequalities provide a way to estimate the volumes of irregular varieties.
Note that varieties of general type in dimension $\geq 3$ could have rather small volumes due to the singularities of their minimal models. A general hypersurface $X_{46}\subset \mathbb P (4, 5, 6, 7, 23)$ is a minimal threefold with volume $\frac{1}{420}$. On the other hand,
Chen and Chen proved in \cite{CC} that if $X$ is an irregular $3$-fold, then $\vol(X)\geq \frac{1}{22}$. We will study in this section irregular varieties with small volumes via the  Severi type inequalities proved in last section.

 \subsection{Varieties of maximal Albanese dimension}
 Let $A$ be an abelian variety. We will denote by $\Phi_{\cP}: \mathrm{D}^b(A)\rightarrow \mathrm{D}^b(\Pic^0{A})$ the Fourier-Mukai transform induced by the normalized Poincar\'e line bundle $\cP$ on $A\times \Pic^0(A)$.

\begin{prop}\label{equality} Let $X$ be a smooth projective variety of maximal Albanese dimension with $\chi(X, \omega_X)=1$ and $\vol(X)=2n!$. Then the canononical model $X_{\mathrm{can}}$ is a flat double cover of a princiaplly polarized abelian variety $(A, \Theta)$ branched over a divisor $D\in |2\Theta|$.
\end{prop}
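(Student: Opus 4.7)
The plan is to combine the degree-two structure theorem of Barja, Pardini and Stoppino \cite{BPS2} (cited in the discussion preceding Theorem~1.5) with Hacon's generic vanishing and Riemann--Roch on the Albanese variety $A:=A_X$. The idea is that \cite{BPS2} gives the double-cover structure essentially for free, and the numerical condition $\chi(\omega_X)=1$ together with generic vanishing pins down the line bundle data as a principal polarization.

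First I would invoke \cite{BPS2}: the assumption $\vol(X)=2n!$ forces $a_X$ to be generically finite of degree $2$ onto $A$. Since $X_{\mathrm{can}}$ has canonical (hence Cohen--Macaulay) singularities and $A$ is smooth, the induced finite degree-two morphism $\pi\colon X_{\mathrm{can}}\to A$ has constant fibre length~$2$, so miracle flatness makes $\pi$ flat. The trace-zero decomposition for a flat double cover then produces a line bundle $\cL$ on $A$ with
$$\pi_*\cO_{X_{\mathrm{can}}}=\cO_A\oplus \cL^{-1},\qquad \omega_{X_{\mathrm{can}}}=\pi^*\cL,\qquad B\in|2\cL|,$$
where $B$ is the branch divisor and we use $\omega_A=\cO_A$. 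The proposition will follow once $\cL$ is identified with a principal polarization.

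To this end, the projection formula gives $\pi_*\omega_{X_{\mathrm{can}}}=\cL\oplus\cO_A$, and rationality of the singularities of $X_{\mathrm{can}}$ yields $\chi(\omega_{X_{\mathrm{can}}})=\chi(\omega_X)=1$; hence $\chi(\cL)=1$. Hacon's generic vanishing applied to $\omega_X$ (combined with $Rf_*\omega_X=\omega_{X_{\mathrm{can}}}$ for any rational resolution $f$) yields $h^i(X_{\mathrm{can}},\omega_{X_{\mathrm{can}}}\otimes\pi^*Q)=0$ for $Q\in\Pic^0(A)$ general and $i>0$; pushing down via the finite map $\pi$ and discarding the $\cO_A$-summand (which has no higher cohomology after twisting by a nontrivial $Q$) leaves $h^i(A,\cL\otimes Q)=0$ for $Q$ general and $i>0$. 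Thus $\cL$ is M-regular, hence continuously globally generated by Pareschi--Popa, hence ample by \cite{D}. Combining ampleness with $\chi(\cL)=1$ forces $h^0(A,\cL)=1$ and $\cL^n=n!\,\chi(\cL)=n!$, so $\cL=\cO_A(\Theta)$ for a unique effective divisor $\Theta$ defining a principal polarization, and $B\in|2\Theta|$.

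The main obstacle I anticipate is the descent of the double-cover structure from $a_X$ down to the canonical model: \cite{BPS2} is phrased for the Albanese map itself, so one must verify that the induced finite morphism from $X_{\mathrm{can}}$ onto $A$ is still of degree~$2$ and invoke miracle flatness on the Cohen--Macaulay source. Once this structural packaging is in hand, the identification $\cL\simeq\cO_A(\Theta)$ is the short numerical interplay above between $\chi(\cL)=1$, generic vanishing, and the Pareschi--Popa/Debarre ampleness criterion.
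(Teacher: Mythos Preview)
Your argument has a genuine gap at exactly the point you flag: the morphism $X_{\mathrm{can}}\to A$ is not known to be finite. From \cite{BPS2} one only knows that $a_X$ is surjective of degree $2$; the Stein factorization gives a finite degree-$2$ cover $g\colon\overline{X}\to A$, and $X_{\mathrm{can}}\to A$ factors through a birational morphism $\phi\colon X_{\mathrm{can}}\to\overline{X}$. Writing $K_{X_{\mathrm{can}}}=\phi^*K_{\overline{X}}+E$, ampleness of $K_{X_{\mathrm{can}}}$ together with the negativity lemma yields only $E\le 0$; in particular $\phi$ need not be an isomorphism, and $X_{\mathrm{can}}\to A$ need not be finite. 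Equivalently, one only has $a_{X*}\omega_X=\cO_A\oplus\cL$ with $\cL$ a rank-one \emph{torsion-free} sheaf, say $\cL=H\otimes\cI_Z$; your miracle-flatness step is precisely the assertion $Z=\varnothing$. Your generic-vanishing argument does show $\cL$ is M-regular and $\chi(\cL)=1$, but this does not force $\cL$ to be locally free: one gets $\chi(H)=1+\chi(\cO_Z)$, and the single constraint $\chi(\cL)=1$ cannot separate $\chi(H)$ from the contribution of $Z$.

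The paper's proof supplies the missing information by exploiting a stronger consequence of \cite{BPS2}: on the Severi line one has the equality of \emph{functions} $\vol(K_X+ta_X^*H)=2n!\,h^0_{a_{X*}\omega_X}(t\underline{h})$ for all $t\le 0$, not just at $t=0$. Expanding the left side as an intersection polynomial on $X_{\mathrm{can}}$ and the right side via Fourier--Mukai (using that $R^0\Phi_{\cP}(\cL)$ is a line bundle when $\cL$ is M-regular of rank one with $\chi(\cL)=1$) and matching the coefficients of $t^n$ and $t^{n-1}$ forces $\chi(H)=1$, i.e.\ $H=\cO_A(\Theta)$. Only then does $\chi(\cL)=\chi(\Theta)=1$ give $Z=\varnothing$, whence $\overline{X}=X_{\mathrm{can}}$ and the flat double-cover description follows. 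So the step you treat as ``structural packaging'' is in fact the substance of the argument.
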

\begin{proof}
Since $\vol(X)=2n!\chi(\omega_X)$, by \cite{BPS}, we know that $a_X: X\rightarrow A_X$ is surjective and is of degree $2$. Let $a_{X*}\omega_X=\cO_A\oplus \cL$, where $\cL$ is a rank one torsion-free sheaf. Let $a_X: X\xrightarrow{\rho} \overline{X}\xrightarrow{g} A_X$. We just need to show that  the double dual $H$ of $\cL$ is a theta divisor. Indeed, $\overline{X}$ is normal and is a double cover of $A_X$, hence it is Gorenstein and $K_{\overline{X}}=g^*H$. If $H=\cO_A(\Theta)$ is a theta divisor, by the assumption that $\chi(X, \omega_X)=1$, we conclude that $\chi(A, \cL)=1$ and hence $\cL=H=\cO_A(\Theta)$ is a theta divisor on $A$. Thus we have $\rho_*\omega_X=\omega_{\overline{X}}$. Hence $\overline{X}$ has canonical singularities and is the canonical model of $X$.

We now write $\cL=H\otimes\cI_Z$, where $H$ is an ample line bundle on $A$. The idea is to compare the two functions $\vol(K_X+ta_X^*H)$ and $h^0_{a_{X*}\omega_X}(t\underline{h})$. By \cite[Page 11, proof of Theorem 1.2]{BPS2}, we know that $\vol(K_X+tH)=2n!h^0_{a_{X*}\omega_X}(t\underline{h})$ for $t\leq 0$. Note that $\chi(X, \omega_X)=1$, hence $\chi(A, H\otimes \cI_Z)=1$. Since $\cL=H\otimes\cI_Z$ is also M-regular, we know that $\Phi_{\cP}(\cL^{\vee})=R^n\Phi_{\cP}(\cL^{\vee})[-n]$ is a shifted rank $1$ torsion-free sheaf by \cite[Corollary 3.2]{pp2}. Hence $R^0\Phi_{\cP}(\cL)=\mathcal{H}om(R^n\Phi_{\cP}(\cL^{\vee}),\cO_A)$ is a line bundle (see for instance \cite[Lemma 2.2]{pp3}). Let $\varphi_H: A\rightarrow \Pic^0(A)$ be the isogeny induced by $H$.  Then, from the short exact sequence $$0\rightarrow\varphi_H^*R^0\Phi_{\cP}(\cL)\rightarrow \varphi_H^*R^0\Phi_{\cP}(H)=(H^{-1})^{\oplus \chi(H)},$$ we conclude that $R^0\Phi_{\cP}(\cL)=H_1$ is a negative line bundle. Indeed, $H_1^{-1}\otimes H^{-1}$ is effective. By the main result of \cite{JP}, we know that
\begin{eqnarray*}
h^0_{a_{X*}\omega_X}(t\underline{h})=\frac{(-t)^n}{\chi(H)}\chi(\varphi_H^*R^0\Phi_{\cP}(\cL)\otimes H^{\frac{-1}{t}})=\frac{1}{n!\chi(H)}(H-tH_1)_A^n
\end{eqnarray*}
for $t<0$ sufficiently close to $0$. On the other hand, $a_X: X\rightarrow A_X$ also factors through the canonical model of $X$. We write $a_X: X\xrightarrow{\sigma} X_{\mathrm{can}}\xrightarrow{g_1} A_X$. Then $$\vol(K_X+ta_X^*H)=\vol(K_{X_{\mathrm{can}}}+tg^*H)=(K_{\mathrm{can}}+tg_1^*H)_{X_{\mathrm{can}}}^n$$ for $t<0$ sufficiently close to $0$. Comparing the coefficients of $t^n$ and $t^{n-1}$, we have  $$(-H_1)_A^n=\chi(H)(H^n)_A$$ and $$(K_{\mathrm{can}}\cdot g_1^*H^{n-1})_{X_{\mathrm{can}}}=\frac{2}{\chi(H)}(H\cdot (-H_1)^{n-1})_A.$$ Note that we have the natural birational morphism $X_{can}\rightarrow \overline{X}$, hence $(K_{\mathrm{can}}\cdot g_1^*H^{n-1})_{X_{\mathrm{can}}}=(K_{\overline{X}}\cdot f^*H^{n-1})_{\overline{X}}=2(H^n)_A$. Thus $(-H_1)^n_A=(H\cdot (-H_1)^{n-1})_A$. Hence $H_1^{-1}$ is algebraically equivalent to $H$. Note that $\varphi_H^*R^0\Phi_{\cP}(\cL)=H^{-1}$, where $\deg \varphi_H=\chi(H)^2$. Hence $(-1)^n\chi(H)=\chi(H)^2\chi(R^0\Phi_{\cP}(\cL))$ and we conclude that $\chi(H)=1$ and $H=\cO_A(\Theta) $ is a theta divisor.
\end{proof}
\begin{coro} Let $X$ be a smooth projective variety of general type with maximal Albanese dimension. Then $\vol(X)\geq 2n!$ and equality holds if and only if the canonical model of $X$ is a flat double cover of a principally polarized abelian variety $(A, \Theta)$ branched over a divisor $D\in |2\Theta|$.
\end{coro}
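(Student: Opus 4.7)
The plan is to split the argument according to whether $\chi(\omega_X)$ is positive or zero. By generic vanishing on an irregular variety of maximal Albanese dimension one always has $\chi(\omega_X) \geq 0$, so these two cases are exhaustive. The idea is that the positive case follows immediately from the classical Severi inequality combined with Proposition \ref{equality}, while the vanishing case must be handled through the fibration structure forced by the Chen--Jiang/Pareschi--Popa--Schnell decomposition.

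When $\chi(\omega_X) \geq 1$, the higher-dimensional Severi inequality of Barja \cite{bar} and Zhang \cite{zh1} gives $\vol(X) \geq 2n!\,\chi(\omega_X) \geq 2n!$, so the inequality is immediate. If equality holds in the corollary then both $\chi(\omega_X) = 1$ and $\vol(X) = 2n!\chi(\omega_X)$ must hold, and this places $X$ inside the hypotheses of Proposition \ref{equality}, which supplies exactly the description of $X_{\mathrm{can}}$ as a flat double cover of a principally polarized abelian variety $(A,\Theta)$ branched along a divisor in $|2\Theta|$.

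When $\chi(\omega_X) = 0$, the goal is to show the strict inequality $\vol(X) > 2n!$, thereby ruling out equality in this regime. By Remark \ref{decom}, vanishing of $\chi$ forces the M-regular summand $(a_{X*}\omega_X)_c$ to be zero, so $a_{X*}\omega_X$ is a finite direct sum of pullbacks $p^*\mathcal{G}\otimes Q$ along proper abelian quotients $p:A_X\to B$ with $\mathcal{G}$ M-regular and nonzero on $B$. Choose such a $p$ with $\dim B$ minimal; the composition $f = p\circ a_X:X\to B$ has a general fiber $F$ of positive dimension which is of maximal Albanese dimension, of general type (via subadditivity for morphisms to abelian varieties), and satisfies $\chi(\omega_F)\geq 1$ by minimality of $\dim B$ (otherwise re-applying the decomposition on $F$ would produce a further quotient of $B$ carrying an M-regular summand pulled back to $X$, contradicting the choice of $p$). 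Apply Theorem \ref{canonical-bundle-general}(1) to $f$, using the inductive hypothesis $\vol(F)\geq 2(\dim F)!$ (together with the inductive characterization of equality) to strengthen $\delta(K_F)$ and $\delta_1(K_F)$, and the lower bound $h^0(B, f_*\omega_X \otimes Q) \geq \chi(\mathcal{G})\geq 1$ from M-regularity, to conclude $\vol(X) > 2n!$.

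The hard part, which I expect to be the technical core, is this second case: the naive Clifford-type estimates $\delta(K_F)\geq 2/(\dim F+1)$ and $\delta_1(K_F)\geq 2$ from Lemma \ref{general-clifford} are too weak, once the factorial denominators in Theorem \ref{canonical-bundle-general}(1) are taken into account, to recover the full factor $2n!$. The resolution will have to use the induction more delicately: a Severi-extremal fiber $F$ is itself a double cover of a principally polarized abelian variety, and this very rigid geometry either improves the Clifford bounds for $F$ or forces the quotient $f$ to behave so specially that it becomes incompatible with $X$ being of general type and $\chi(\omega_X)=0$ simultaneously, thereby yielding strictness.
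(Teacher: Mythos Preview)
Your treatment of the case $\chi(\omega_X)\ge 1$ is fine and matches the paper. The gap is in the $\chi(\omega_X)=0$ case, and it is exactly the one you flag yourself: Theorem~\ref{canonical-bundle-general}(1) cannot close the argument. The bound it produces is
\[
\vol(X)\ \ge\ \min\Bigl\{\tfrac{n!}{(\dim F)!}\,\delta(K_F),\ \delta_1(K_F)\,(n-\dim F)!\Bigr\}\cdot h^0(B,f_*\omega_X\otimes Q),
\]
and even granting the inductive bound $\vol(F)\ge 2(\dim F)!$, one only gets $\delta(K_F)\le \vol(K_F)/p_g(F)$, which can be far below $2(\dim F)!$; there is no mechanism here to recover the full $2n!$. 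Your suggested repair (exploiting that a Severi-extremal $F$ is a double cover of a p.p.a.v.) does not improve $\delta(K_F)$ or $\delta_1(K_F)$ in any way that survives the factorial loss, and the paper does not attempt this.

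What the paper actually does is abandon the Severi-type inequality entirely in this case and use instead Kawamata's multiplicative bound (\cite[Theorem~7.1]{Zh-Ka}): for a fibration $X\to Y$ between smooth projective varieties of general type with general fiber $F$,
\[
\vol(X)\ \ge\ \binom{\dim X}{\dim Y}\vol(Y)\,\vol(F).
\]
Combined with the inductive hypothesis on both base and fiber, this immediately gives $\vol(X)\ge 4n!$ once one has produced a fibration with base of general type. To produce such a fibration the paper does not use the decomposition of $a_{X*}\omega_X$ as you do, but rather the structure of $V^k(\omega_X)$ from \cite{JLT}: one picks a codimension-$k$ component $[Q]+\widehat{B}$ of $V^k(\omega_X)$ with $0<k<n$ maximal and distinguishes whether $Q$ is trivial (then the Stein factorization of $X\to B$ has a general-type base and Kawamata applies directly) or nontrivial (then one passes to a suitable \'etale cover $\tilde X\to\hat Y$, shows $\chi(\omega_{\hat Y})\ge |G|-1$, applies Kawamata there, and finally rules out equality by a contradiction involving a birational $(\mathbb Z/2)^2$-cover structure). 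Your ``minimal $\dim B$'' choice does not obviously yield a base of general type, nor the M-regularity control on $\hat Y$ needed in the second subcase.

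Finally, the induction needs an honest base: in dimension $3$ with $\chi(\omega_X)=0$ the paper invokes the classification of \cite{CDJ} to get $\vol(X)\ge 24$; your outline has no replacement for this step.
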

\begin{proof}
By the last proposition, we just need to deal with the case that $\chi(X, \omega_X)=0$.

When $\dim X=3$, by the main result of \cite{CDJ}, the canonical model $f: X_{\mathrm{can}}\rightarrow A_X$ is a flat $\mathbb Z_2^2-$cover, there exists an isogeny $\rho: A_X\rightarrow E_1\times E_2\times E_3$ and $a_*\omega_X=\cO_A\oplus \cL_1\oplus \cL_2\oplus\cL_3$, where $\cL_i$ are line bundles on $A_X$ whose Iitaka fibration is the natural morphism $A_X\rightarrow E_j\times E_k$, where $\{i, j, k\}=\{1, 2, 3\}$. Moreover, $2K_{\mathrm{can}}=f^*(\cL_1\otimes \cL_2\otimes\cL_3)$ is a line bundle. We conclude that $\vol(X)\geq 24$.

In higher dimensions, such characterization is not available. Let $\dim X=n$. We argue by induction on dimensions to prove that $\vol(X)>2(\dim X)!$ when $\chi(\omega_X)=0$.

Assume that $\vol(Y)>2(\dim Y)!$ for all $Y$ smooth projective of maximal Albanese dimension and of general type of dimension $<n$, and $\chi(\omega_Y)=0$. We apply Setting 3.2 of \cite{JLT} of $X$. Namely, there exists a codimension-$k$ component $[Q]+\PB$ of $V^k(\omega_X)$ with $0<k<n$ maximal, where $\PB$ is an abelian subvariety of $\Pic^0(A_X)$, $Q$ is a torsion line bundle and $[Q]\in \Pic^0(A_X)$ is the corresponding point. Taking the natural quotient $A_X\rightarrow \Pic^0\PB:=B$ and taking Stein factorization, we have the following commutative diagram
\begin{eqnarray*}
\xymatrix{
X\ar[r]^{a_X}\ar[d]^h & A_X\ar[d]\\
Y\ar[r] & B,}
\end{eqnarray*}
where after birational modifications, all varieties are smooth. Let $F$ be a general fiber of $h$. Note that by \cite{JLT}, we know that there are two cases, either $Q$ is the trivial line bundle or $Q$ is torsion and non-trivial.

If $Q$ is the trivial line bundle, then $Y$ is of general type. Hence by Kawamata's result \cite[Theorem 7.1]{Zh-Ka}, $\vol(X)\geq \binom{\dim X}{\dim Y}\vol(Y)\vol(F).$ Hence by induction, we have  $\vol(X)\geq 4n!$ in this case.

 In the second case, $X\rightarrow B$ is a fibration. We know that $\Pic^0(B)$ is the neutral component of $H:=\ker(\Pic^0(X)\rightarrow \Pic^0(F))$. We take a finite group $G\subset H$ such that $G+\Pic^0(B)=H$ and $G\cap \Pic^0(B)=[\cO_X]$. Let $\tilde{A}_X\rightarrow A_X$ be the \'etale cover induced by $G$ and let $\tilde{X}\rightarrow X$ be the corresponding base change and let  $\tilde{X}\rightarrow \hat{Y}\rightarrow B$ be the Stein factorization of the natural morphism $\tilde{X}\rightarrow B$. After birational modifications, we may assume that $\hat{Y}$ is smooth. We have the commutative diagram
 \begin{eqnarray*}
\xymatrix{
\tilde{X}\ar[r]^{\pi}\ar[d]^{q} & X\ar[d]^p\\
\hat{Y}\ar[r]^{\rho} & B.}
\end{eqnarray*}
Then $F$ is still a general fiber of $q$ and $\rho$ is a birational $G$-cover and $$\rho_*\omega_{\hat{Y}}=\bigoplus_{[Q]\in G}\cL_Q,$$ where $\cL_Q=R^kp_*(\omega_X\otimes Q)$ and $\cL_Q$ is M-regular for all $[Q]$ non-zero due to the maximality of $k$ (see \cite[Section 3]{JLT}). Hence $\chi(\omega_{\hat{Y}})\geq |G|-1$ and $\vol(\hat{Y})\geq 2(|G|-1)(\dim \hat{Y})!$  Hence
\begin{eqnarray*}\vol(X)&=&\frac{1}{|G|}\vol(\tilde{X})\geq \frac{1}{|G|}\binom{\dim X}{\dim \hat{Y}}\vol(\hat{Y})\vol(F)\\
&\geq & \frac{4(|G|-1)}{|G|}(\dim X)!\geq 2(\dim X)!.
\end{eqnarray*}
 Note that if $\vol(X)=2(\dim X)!$, then $|G|=2$, $\vol(\hat{Y})=2(\dim \hat{Y})!$, and $\vol(F)=2(\dim F)!$.
  In the following we will argue by contradiction to see that we cannot have all the equalities.

  By induction, the canonical model of $F$ is a flat double cover of $A_F$ branched over $D_F\in|2\Theta_F|$, where $\Theta_F$ is a principal polarization on $A_F$. Let $K$ be the kernel of $\tilde{A_X}\rightarrow B$. By  the construction of $\tilde{X}$, the natural morphism $F\rightarrow K$ is primitive. Hence $K\simeq A_F$ and $F\rightarrow K$ is of degree $2$. Thus  $\tilde{X}\rightarrow \tilde{A}_X$ is of degree $4$ and there exists a birational involution $\sigma_1$ acting on $\tilde{X}$ whose restriction on $F$ is the canonical involution.

 We claim that $\tilde{X}\rightarrow \tilde{A}_X$ is indeed a birationally $(\mathbb Z_2\times \mathbb Z_2)$-cover. Indeed, since $\chi(\omega_X)=0$, the decomposition of $a_{X*}\omega_X$ has a special form. Indeed, $(a_{X*}\omega_X)_c=0$. By the same argument as above, we conclude that each direct summand of $a_{X*}\omega_X$ is indeed a nef line bundle and hence after possibly a further abelian \'etale cover, $\tilde{X}\rightarrow \tilde{A_X}$ has $3$ independent involutions. Thus $\tilde{X}\rightarrow \tilde{A}_X$ is indeed a birationally $(\mathbb Z_2\times \mathbb Z_2)$-cover. Hence so is $X\rightarrow A_X$. But since $X\rightarrow B$ is a fibration, we conclude that $F$ is a birationally $(\mathbb Z_2\times \mathbb Z_2)$-cover of the kernel of $A_X\rightarrow B$, which is obviously a contradiction to the assumption that $\vol(F)=2(\dim F)!$.
\end{proof}
\subsection{Irregular threefolds}

For general irregular varieties, the picture is not clear. We focus on irregular threefolds and have some partial results.
\begin{prop}\label{fiber1}
Let $X$ be a $3$-fold of general type, of Albanese fiber dimension $1$. Then $\vol(X)\geq 2$.
\end{prop}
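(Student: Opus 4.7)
The plan is to study the Albanese morphism $a_X\colon X\to A_X$, so that $\dim A_X=q(X)=2$, via its Stein factorization $X\xrightarrow{g} V\xrightarrow{h} A_X$, where $V$ is a surface, $h$ is generically finite, and $g$ has connected one-dimensional fibers. Let $F$ be a general fiber of $g$. I would first argue that $g(F)\geq 2$: if $g(F)=0$ then $X$ would be uniruled, contradicting that $X$ is of general type; if $g(F)=1$ then $g$ is an elliptic fibration and the canonical bundle formula forces $K_X$ to be numerically a pullback from $V$ (up to vertical contributions), so $K_X^3=0$, contradicting $\kappa(X)=3$. In particular the gonality satisfies $d_F\geq 2$.

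If $\chi(\omega_X)\geq 1$, I would simply apply Corollary \ref{curve-canonical} with $m=\dim X=3$. Using generic vanishing (Hacon) to identify $h^0(A_X, a_{X*}\omega_X\otimes Q)$ with $\chi(\omega_X)$ for very general $Q\in\Pic^0(A_X)$, I get
\[
\vol(X)\geq \min\{3!,\,2!\cdot d_F\}\,\chi(\omega_X)\geq 4\chi(\omega_X)\geq 4>2.
\]

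The substantive case is $\chi(\omega_X)=0$, where the M-regular part $(a_{X*}\omega_X)_c$ vanishes. By the decomposition theorem (Remark \ref{decom}), $a_{X*}\omega_X=\bigoplus_j p_j^*\cG_j\otimes Q_j$, where each $p_j\colon A_X\to E_j$ is a surjection to an elliptic curve, $\cG_j$ is M-regular on $E_j$ (hence of strictly positive degree), and $Q_j$ is torsion. I fix one such quotient $\pi\colon A_X\to E$ with M-regular summand $\cG$, and consider the composition $p=\pi\circ a_X\colon X\to E$ with general fiber $S$ of dimension $2$. Iitaka's easy addition gives $\kappa(S)\geq \kappa(X)-\dim E=2$, so $S$ is a surface of general type. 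Applying Corollary \ref{surface-canonical} to $p$ yields
\[
\vol(X)\geq (m-2)!\,h^0(E, p_*\omega_X\otimes Q)=h^0(E, p_*\omega_X\otimes Q),
\]
with the improvement $\vol(X)\geq 2h^0(E, p_*\omega_X\otimes Q)$ unless $\vol(S)=1$. The selected M-regular summand contributes $h^0(E, p_*\omega_X\otimes Q)\geq \deg\cG\geq 1$, so whenever $\vol(S)\geq 2$ we conclude $\vol(X)\geq 2$.

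The main obstacle will be the corner case $\vol(S)=1$, in which $S$ is a Horikawa-type surface with $K_S^2=1$ and $p_g(S)\leq 2$. Here $S$ carries a natural fibration $S\to K$ onto the kernel elliptic curve $K=\ker\pi$ with the same genus-$g(F)\geq 2$ fibers, so $S$ is itself of maximal Albanese dimension; Pardini's Severi inequality for surfaces then forces $\chi(\omega_S)=0$, and combining this rigidity with Xiao's slope inequality for $S\to K$ and a careful bookkeeping of which summands $p_j^*\cG_j\otimes Q_j$ descend to $E$, I would expect to show that either $\deg\cG\geq 2$ or multiple summands contribute, giving $h^0(E, p_*\omega_X\otimes Q)\geq 2$ and hence $\vol(X)\geq 2$. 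Disentangling this rigid corner configuration—forcing simultaneous equality in Severi, in Xiao, and in the surface Noether bound—is where the technical work concentrates.
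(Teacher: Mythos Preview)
Your overall strategy mirrors the paper's: dispose of $\chi(\omega_X)\geq 1$ via Corollary~\ref{curve-canonical}, and when $\chi(\omega_X)=0$ pass to an elliptic quotient $\pi\colon A_X\to E$ and study the surface fiber $S$ of $p=\pi\circ a_X$. The gap is in how you handle that surface.

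Your claim that ``$S$ is itself of maximal Albanese dimension'' because it fibers over the kernel elliptic curve $K$ with genus~$\geq 2$ fibers is simply false: many surfaces with $q=1$ fiber over an elliptic curve with high-genus fibers. So your route into the corner case collapses at that sentence. What the paper does instead is use the decomposition of $a_{X*}\omega_X$ explicitly: restricting the summand $p_1^*\cF_{11}\otimes Q_{11}$ to the fiber $K$ of $p_1$ shows that $a_{t*}\omega_{X_t}$ contains the torsion line bundle $Q_{11}|_K$ as a direct summand, and it is this structural fact (not merely the existence of the map to $K$) that forces an abelian \'etale cover of $X_t$ to be of maximal Albanese dimension. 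Then Severi's inequality together with the elementary bound $\chi(\omega_{X_t})\geq 1$, valid for every surface of general type, gives $\vol(X_t)\geq 4$ outright---the ``corner case'' $\vol(S)=1$ never occurs. Your own last paragraph should have signalled this: from Severi and $\vol(S)=1$ you deduced $\chi(\omega_S)=0$, which is impossible for a surface of general type; that is a contradiction eliminating the case, not a datum to carry into further bookkeeping with Xiao's inequality.

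Two smaller issues. First, $q(X)=2$ is not given by ``Albanese fiber dimension one''; the paper obtains surjectivity of $a_X$ (hence $\dim A_X=2$) in the $\chi=0$ case from \cite{JS}. Second, your bound $h^0(E,p_*\omega_X\otimes Q)\geq\deg\cG$ is unjustified: if $Q_{1}|_K$ is nontrivial then $\pi_*(p_1^*\cG\otimes Q_{1})=0$, so the chosen summand need not survive in $p_*\omega_X$ at all. The paper fixes this by twisting, working with $\omega_X\otimes Q_{11}^{-1}$ throughout so that $f_*(\omega_X\otimes Q_{11}^{-1})$ genuinely contains $\cF_{11}$, and then applies (a small variant of) Corollary~\ref{surface-canonical} to that twisted bundle.
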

\begin{proof}
By Corollary \ref{curve-canonical}, $\vol(X)\geq 4h^0(A_X, a_{X*}\omega_X\otimes Q)$ for $Q\in\Pic^0(X)$ general. We still need to deal with the case that $h^0(A_X, a_{X}*\omega_X\otimes Q)=0$ for $Q\in\Pic^0(X)$ general.   In this case, the M-regular part of $a_{X*}\omega_X $ is $0$. By \cite{JS}, we know that the translates through the origin of all irreducible component of $V^0(a_{X*}\omega_X)$ generate $\Pic^0(X)$. Hence $a_X$ is surjective and $V^0(a_{X*}\omega_X)$ contains torsion translates of at least two different elliptic curves.

We may write $a_{X*}\omega_X=\bigoplus_{p_i: A\rightarrow E_i}\bigoplus_j p_i^*\cF_{ij}\otimes Q_{ij}$, where $\cF_{ij}$ are ample vector bundles on $E_i$ and $Q_{ij}$ are torsion line bundles. Let $E=E_1$, $Q:=Q_{11}^{-1}$, $f:=p_1\circ a_X: X\rightarrow E$. Then $f_*(\omega_X\otimes Q)$ is a vector bundle on $E$, which is the direct sum of $\cF_{11}$ and some torsion line bundles on $E$. In particular, $h^0(E, f_*(\omega_X\otimes Q)\otimes P)\neq 0$ for $P\in\Pic^0(E)$ general.

 Moreover, we see that a general fiber $X_t$ of $f$ is an irregular surface, where $t\in E$.
 We consider $a_t: X_t\rightarrow K$ the corresponding fiber of $f$. Hence $a_{t*}\omega_{X_t}$ has a direct summand $Q_{11}\mid_K$. Hence an abelian \'etale cover of $X_t$ is of maximal Albanese dimension. The Severi inequality for surfaces $\vol(K_{X_t})\geq 4\chi(\omega_{X_t})\geq 4$ holds for $X_t$. In particular, by a small variant of Corollary \ref{surface-canonical}, we have $\vol(K_X\otimes Q)\geq 2h^0(E, f_*(\omega_X\otimes Q)\otimes P)$ for $P\in\Pic^0(E)$ general. Hence $\vol(X)=\vol(K_X\otimes Q)\geq 2$.

\end{proof}

\begin{theo}\label{irregular-3-volume}
Let $X$ be an irregular threefold, then $\vol(K_X)\geq \frac{3}{8}$.
\end{theo}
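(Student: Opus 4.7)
The plan is to split according to the Albanese dimension of $X$. When $X$ has maximal Albanese dimension ($\dim a_X(X)=3$), the corollary preceding this theorem gives $\vol(X)\geq 2\cdot 3!=12$. When $\dim a_X(X)=2$, Proposition~\ref{fiber1} gives $\vol(X)\geq 2$. It therefore suffices to treat the Albanese fibre dimension $2$ case, i.e.\ $\dim a_X(X)=1$, in which the Stein factorization of $a_X$ gives a fibration $f\colon X\to Y$ over a smooth curve $Y$ whose general fiber $F$ is a minimal surface of general type (by easy addition, $\kappa(F)=2$, so $\vol(F)\geq K_F^2\geq 1$). Since $Y$ lies in $A_X$ and generates it, $g(Y)\geq q(X)$.

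If $q(X)\geq 2$ then $Y$ is itself of general type with $\vol(Y)=2g(Y)-2\geq 2$, and Kawamata's subadditivity inequality gives $\vol(X)\geq\binom{3}{1}\vol(F)\vol(Y)\geq 6$. We may therefore assume $q(X)=1$ and $Y=E$ is an elliptic curve. Decompose $f_*\omega_X=\mathcal F_X\oplus\bigoplus_j Q_j$ as in Remark~\ref{decom}, with $\mathcal F_X$ either zero or M-regular on $E$ and the $Q_j$ torsion line bundles. If $\mathcal F_X\neq 0$, then $h^0(E,f_*\omega_X\otimes Q)\geq 1$ for generic $Q\in\Pic^0(E)$, and Corollary~\ref{surface-canonical} yields $\vol(X)\geq h^0(E,f_*\omega_X\otimes Q)\geq 1$, which is already $>3/8$.

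The critical remaining subcase is $\mathcal F_X=0$, where $f_*\omega_X$ is unitary on $E$ of degree $0$, so the Severi inequality at the level of $\omega_X$ is vacuous. Here the plan is to apply the pluricanonical Severi inequality of Theorem~\ref{canonical-bundle-general}(3) with $r=2$. Since $F$ is non-uniruled, Lemma~\ref{general-clifford} gives $\delta(2K_F)\geq 2/3$ and $\delta_1(2K_F)\geq 2$ whenever $|2K_F|$ induces a generically finite map; the exceptional cases (Horikawa surfaces with very small $K_F^2$ for which $|2K_F|$ fails generic finiteness) are treated separately by Theorem~\ref{canonical-bundle-general}(2) applied to the Iitaka pencil of $|K_F|$ with an explicit lower bound for $\vol(F\mid V,2K_F)$. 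Since $f_*\omega_X^2$ is IT$^0$ on $E$, we obtain
\[
8\,\vol(X)\ \geq\ 2\,h^0(E,f_*\omega_X^2\otimes Q)\ =\ 2\deg(f_*\omega_X^2).
\]

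The main obstacle is then to extract from the above the sought lower bound $\vol(X)\geq 3/8$, which amounts to showing $\deg(f_*\omega_X^2)\geq 3/2$, i.e.\ $\geq 2$ since the degree is an integer. This sharper bound on the bicanonical Hodge degree must be drawn from the non-isotriviality of $f$ (forced by $X$ being of general type, since an isotrivial fibration over $E$ yields $K_X^3=0$) combined with Viehweg's weak positivity of $f_*\omega_{X/E}^2$ and a case analysis of the possible decompositions of $f_*\omega_X^2$ into indecomposable summands on the elliptic curve $E$. Once this positivity step is secured, the minimum over all cases of the above split is $3/8$, attained in the elliptic subcase.
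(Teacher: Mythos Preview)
Your case split and the first several reductions match the paper's argument: maximal Albanese dimension and Albanese fibre dimension one are disposed of by the earlier results, and in fibre dimension two the case $h^0(A_X,a_{X*}\omega_X\otimes Q)>0$ goes through Corollary~\ref{surface-canonical}. Your treatment of $q(X)\geq 2$ via Kawamata's product inequality is a valid alternative, though the paper simply absorbs this into the previous case.

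The genuine gap is in the critical subcase $A_X=E$ elliptic with $(f_*\omega_X)_c=0$. You correctly reach
\[
8\vol(X)\ \geq\ 2\,h^0(E,f_*\omega_X^2\otimes Q)\ =\ 2\deg(f_*\omega_X^2),
\]
but then you \emph{assume} that $\deg(f_*\omega_X^2)\geq 2$, admitting this is ``the main obstacle'' and invoking non-isotriviality and weak positivity without an argument. No such lower bound is established in the paper, and the paper does not try to rule out $\deg(f_*\omega_X^2)=1$. Instead it exploits that case directly: if $\deg(f_*\omega_X^2)=1$ then, since $f_*\omega_X^2$ is IT$^0$ on $E$, every indecomposable summand has positive degree, so the bundle is indecomposable and hence \emph{stable}. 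One then applies the semistable refinement (Remark~\ref{variant2}) to get
\[
8\vol(X)\ \geq\ 3\,\frac{\vol(2K_F)}{h^0(F,2K_F)}\,h^0(E,f_*\omega_X^2\otimes Q)\ =\ 3\,\frac{\vol(2K_F)}{h^0(F,2K_F)},
\]
and bounds $\dfrac{\vol(2K_F)}{h^0(F,2K_F)}=\dfrac{4\vol(K_F)}{\chi(\cO_F)+\vol(K_F)}$ from below using Noether's inequality (and Debarre's inequality if $q(F)>0$). This is exactly where the constant $\tfrac{3}{8}$ is produced, in the borderline case $p_g(F)=2$, $q(F)=0$, $\vol(K_F)=1$. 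Your outline bypasses this mechanism and therefore does not actually reach $\tfrac{3}{8}$.

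A smaller issue: when $|2K_F|$ fails to be generically finite one has $p_g(F)=q(F)=0$, so there is no ``Iitaka pencil of $|K_F|$'' to invoke, and Theorem~\ref{canonical-bundle-general}(2) (which concerns $|K_F|$) is inapplicable. The paper handles this case via Proposition~\ref{nonbig} applied to $2K_X$, using that a general member $C$ of the bicanonical pencil on the minimal model satisfies $\vol(F|C,2K_F)=2$, giving $\vol(X)\geq\tfrac12$.
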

\begin{proof}
By Proposition \ref{equality} and Proposition \ref{fiber1}, we just need to deal with the case that $X$ is of Albanese fiber dimension $2$. By Corollary \ref{surface-canonical}, if $h^0(a_{X*}\omega_X\otimes Q)>0$ for $Q\in\Pic^0(X)$ general, then $\vol(X)\geq 1$. Hence we will assume that $h^0(A_X, a_{X*}\omega_X\otimes Q)=0$ and in particular, $a_X: X\rightarrow A_X$ is a fibration onto an elliptic curve. Let $F$ be a general fiber of $a_X$.

We first assume that $|2K_F| $ induces a generically finite map of $F$. Then by Theorem \ref{general}, $$8\vol(K_X)\geq 2h^0(A_X, a_{X*}\omega_X^2\otimes Q).$$

 If $h^0(A_X, a_{X*}\omega_X^2\otimes Q)\geq 2$, we are done.

 If not, $a_{X*}\omega_X^2$ is a degree $1$ vector bundle on $A_X$. Hence $a_{X*}\omega_X^2$ is stable and we may apply the observation in Remark \ref{variant2} to conclude that $$\vol(K_X)\geq \frac{3\vol(K_F)}{2(\chi(\cO_F)+\vol(K_F))}.$$ We just need to show that $\frac{3\vol(K_F)}{\chi(\cO_F)+\vol(K_F)}\geq \frac{3}{4}$. If $q(F)=0$, then by Noether's inequality, $\vol(K_F)\geq 2p_g(F)-4$ and if $q(F)>0$, by Debarre's inequality \cite{D1}, $\vol(K_F)\geq 2p_g(F)$.
 Hence if $q(F)\neq 0$, $\frac{3\vol(K_F)}{\chi(\cO_F)+\vol(K_F)}\geq 2$. If $q(F)=0$, we have $\frac{3\vol(K_F)}{\chi(\cO_F)+\vol(K_F)}\geq 1$ unless $p_g(F)=2$ and $\vol(K_F)=1$.  In this case, $\vol(K_X)\geq \frac{3}{8}$.

If $|2K_F|$ is not generically finite, then by \cite[Section VII, Theorem 7.4, Theorem 7.6]{BHP}, we know that $p_g(F)=q(F)=0$ and $\vol(K_F)=1$. We also know that a general fiber of the map of $|2K_F|$ is a curve $C$ with genus $3$ or $4$ (see \cite[Theorem 5.1]{CP}) and hence the linear system $|2K_{F_0}|$ has no base divisors and $\sigma_*C\in|2K_{F_0}|$, where $\sigma: F\rightarrow F_0$ is the morphism from $F$ to its minimal model. Hence by Proposition \ref{nonbig}, $\vol(2K_X)=8\vol(K_X)\geq 2\vol(X|C, K_X)$. In this case, $\vol(X|C, K_X)=\vol(F|C, \sigma^*K_{F_0})=2$. Hence $\vol(K_X)\geq \frac{1}{2}$.
\end{proof}


\begin{thebibliography}{vol}
 \bibitem[ACGH]{ACGH}  Arbarello, E.; Cornalba, M.; Griffiths, P. A.; Harris, J. Geometry of algebraic curves. Vol. I. Grundlehren der Mathematischen Wissenschaften [Fundamental Principles of Mathematical Sciences], 267. Springer-Verlag, New York, 1985.
 \bibitem[Bar]{bar} Barja, Miguel \'Angel. Generalized Clifford-Severi inequality and the volume of irregular varieties. Duke Math. J. 164 (2015), no. 3, 541–-568.
 \bibitem[BDELU]{BDELU} Bastianelli, Francesco; De Poi, Pietro; Ein, Lawrence; Lazarsfeld, Robert; Ullery, Brooke. Measures of irrationality for hypersurfaces of large degree. Compos. Math. 153 (2017), no. 11, 2368--2393.
 \bibitem[B]{B}Beauville, Arnaud. L'application canonique pour les surfaces de type g\'en\'eral. Invent. Math. 55 (1979), no. 2, 121--140.
 \bibitem[BDPP]{BDPP}  Boucksom, S\'ebastien; Demailly, Jean-Pierre; P\u{a}un, Mihai; Peternell, Thomas. The pseudo-effective cone of a compact K\"{a}hler manifold and varieties of negative Kodaira dimension. J. Algebraic Geom. 22 (2013), no. 2, 201--248.
 \bibitem[BHP]{BHP} Barth, Wolf P.; Hulek, Klaus; Peters, Chris A. M.; Van de Ven, Antonius Compact complex surfaces. Second edition. Ergebnisse der Mathematik und ihrer Grenzgebiete. 3. Folge. A Series of Modern Surveys in Mathematics [Results in Mathematics and Related Areas. 3rd Series. A Series of Modern Surveys in Mathematics], 4. Springer-Verlag, Berlin, 2004.
\bibitem[BFJ]{BFJ} Boucksom, S\'ebastien; Favre, Charles; Jonsson, Mattias.  Differentiability of volumes of divisors and a problem of Teissier. J. Algebraic Geom. 18 (2009), no. 2, 279–-308.
 \bibitem[BPS1]{BPS}  Barja, Miguel \'Angel; Pardini, Rita; Stoppino, Lidia.  Linear systems on irregular varieties, arXiv:1606.03290.
\bibitem[BPS2]{BPS2}  Barja, Miguel \'Angel; Pardini, Rita; Stoppino, Lidia. Higher dimensional Clifford-Severi inequalities, arXiv: 1606.03301.
\bibitem[BPS3]{BPS3}  Barja, Miguel \'Angel; Pardini, Rita; Stoppino, Lidia. The eventual paracanonical map of a variety of maximal Albanese dimension, Algebraic Geometry (to appear). arXiv: 1606.03301.
\bibitem[CC1]{CC} Chen, Jungkai A.; Chen, Meng. The canonical volume of 3-folds of general type with $\chi\leq 0$. J. Lond. Math. Soc. (2) 78 (2008), no. 3, 693–-706.
\bibitem[CC2]{CC2} Chen, Jungkai A.; Chen, Meng. Explicit birational geometry of 3-folds and 4-folds of general type, III. Compos. Math. 151 (2015), no. 6, 1041--1082.
\bibitem[CDJ]{CDJ} Chen, Jungkai Alfred; Debarre, Olivier; Jiang, Zhi. Varieties with vanishing holomorphic Euler characteristic. J. Reine Angew. Math. 691 (2014), 203--227.
\bibitem[CJ]{CJ} Chen, Jungkai Alfred; Jiang, Zhi. Positivity in varieties of maximal Albanese dimension. J. Reine Angew. Math. 736 (2018), 225–-253.
\bibitem[CP]{CP} Catanese, Fabrizio; Pignatelli, Roberto. Fibrations of low genus, I. Ann. Sc. École Norm. Sup. 39 (2006),1011--1049.
\bibitem[D1]{D1} Debarre, Olivier.  In\'egalit\'es num\'eriques pour les surfaces de type g\'en\'eral. Bull. Soc. math. France 110 (1982), 319--346.
\bibitem[D2]{D} Debarre, Olivier.  On coverings of simple abelian varieties. Bull. Soc. math. France  134 (2006), 253--260.
\bibitem[EL]{EL}  Ein, Lawrence; Lazarsfeld, Robert. Singularities of theta divisors and the birational geometry of irregular varieties. J. Amer. Math. Soc. 10 (1997), no. 1, 243--258.
\bibitem[ELMNP]{ELMNP}  Ein, Lawrence; Lazarsfeld, Robert; Mustata, Mircea; Nakamaye, Michael; Popa, Mihnea. Restricted volumes and base loci of linear series. Amer. J. Math. 131 (2009), no. 3, 607--651.
\bibitem[HM]{HM} Hacon, Christopher D.; McKernan, James. Boundedness of pluricanonical maps of varieties of general type. Invent. Math. 166 (2006), no. 1, 1--25.
\bibitem[JLT]{JLT} Jiang, Zhi; Lahoz, Mart\'i; Tirabassi, Sofia. On the Iitaka fibration of varieties of maximal Albanese dimension. Int. Math. Res. Not. 2013 (2013), no. 13, 2984--3005.
\bibitem[J]{J} Jiang, Zhi.  Some results on the eventual paracanonical maps,  arXiv:1611.07141.
\bibitem[JP]{JP}Jiang, Zhi; Pareschi, Giuseppe.  Cohomological rank functions on abelian varieties, arXiv:  arXiv:1707.05888, to appear in Ann. Sc. École Norm. Sup.
\bibitem[JS]{JS} Jiang, Zhi; Sun, Hao. Cohomological support loci of varieties of Albanese fiber dimension one. Trans. Amer. Math. Soc. 367 (2015), no. 1, 103--119.
\bibitem[L]{L} Lazarsfeld, Robert. Positivity in algebraic geometry. I. Classical setting: line bundles and linear series. Ergebnisse der Mathematik und ihrer Grenzgebiete. 3. Folge. A Series of Modern Surveys in Mathematics [Results in Mathematics and Related Areas. 3rd Series. A Series of Modern Surveys in Mathematics], 48. Springer-Verlag, Berlin, 2004.
\bibitem[LPS]{LPS} Lombardi, Luigi; Popa, Mihnea; Schnell, Christian. Pushforwards of pluricanonical bundles under morphisms to abelian varieties.
To appear in Journal of the European Mathematical Society.
\bibitem[LZ]{LZ} Lu, Xin; Zuo, Kang. On Severi Type Inequalities for Irregular Surfaces. Int. Math. Res. Not. IMRN 2019, no. 1, 231--248.
\bibitem[Par]{par}  Pardini, Rita. The Severi inequality $K^2\geq 4\chi$ for surfaces of maximal Albanese dimension. Invent. Math. 159 (2005), no. 3, 669--672.
\bibitem[PP1]{PP}  Pareschi, Giuseppe; Popa, Mihnea. Regularity on abelian varieties. I. J. Amer. Math. Soc. 16 (2003), no. 2, 28--302.
\bibitem[PP2]{pp2}  Pareschi, Giuseppe; Popa, Mihnea. Strong generic vanishing and a higher-dimensional Castelnuovo-de Franchis inequality. Duke Math. J. 150 (2009), no. 2, 269--285.
\bibitem[PP3]{pp3}  Pareschi, Giuseppe; Popa, Mihnea. GV-sheaves, Fourier-Mukai transform, and generic vanishing. Amer. J. Math. 133 (2011), no. 1, 235--271.
\bibitem[PPS]{PPS}  Pareschi, Giuseppe; Popa, Mihnea; Schnell, Christian. Hodge modules on complex tori and generic vanishing for compact K\"ahler manifolds. Geom. Topol. 21 (2017), no. 4, 2419--2460.
\bibitem[P]{P} Pignatelli, Roberto. Some (big) irreducible components of the moduli space of minimal surfaces of general type with $p_g=q=1$ and $K^2=4$. Atti Accad. Naz. Lincei Rend. Lincei Mat. Appl. 20 (2009), no. 3, 207--226.
\bibitem[T]{T} Takayama, Shigeharu. Pluricanonical systems on algebraic varieties of general type. Invent. Math. 165 (2006), no. 3, 551–-587.
\bibitem[Ts]{Ts} Tsuji, Hajime. Pluricanonical systems of projective varieties of general type. II. Osaka J. Math. 44 (2007), no. 3, 723--764.
\bibitem[X]{X} Xiao,  Gang. Fibered algebraic surfaces with low slope. Math. Ann. 276 (1987), no. 3, 449--466.
\bibitem[ZhD]{Zh-Ka}  Zhang, De-Qi. Small bound for birational automorphism groups of algebraic varieties. With an appendix by Yujiro Kawamata. Math. Ann. 339 (2007), no. 4, 957--975.
\bibitem[Zh1]{zh1} Zhang, Tong. Severi inequality for varieties of maximal Albanese dimension. Math. Ann.359 (2014), no.3-4, 1097--1114.
\bibitem[Zh2]{zh} Zhang, Tong. Relative Clifford inequality for varieties fibered by curves, arXiv:1706.06523.


\end{thebibliography}
\end{document}